\date{}
\newlength{\defbaselineskip}
\newcommand{\setlinespacing}[1]%
           {\setlength{\baselineskip}{#1 \defbaselineskip}}
\newcommand{\N}{{\mathbb{N}}}
\newcommand{\actaqed}{\hfill $\actabox$}
{\medskip\noindent \textit{Proof of #1. }}%
{\actaqed \medskip}
\def\D{{\mathcal D}}
\def\cA{{\mathcal A}}
\def\C{{\mathcal C}}
\def\cC{{\mathcal C}}
\def\cF{{\mathcal F}}
\def\FF{{\mathcal F}}
\def\cH{{\mathcal H}}
\def\CW{\mathcal{W}}
\def \Tr{\mathcal T}
\def \cN{\mathcal N}
\def \cX{\mathcal X}
\def\R{{\mathbb R}}
\def\Z{\mathbb Z}
\def\ZZ{\mathbb Z}
\def \T{\mathbb T}
\def\bbC{\mathbb C}
\def \<{\langle}
\def\>{\rangle}
\def \Og{\Omega}
\def \e{\varepsilon}
\def \va{\varepsilon}
\def \ff{\varphi}
\def\ga{\gamma}
\def \sp{\operatorname{span}}
\def\bx{\mathbf x}
\def\bz{\mathbf z}
\def\bk{\mathbf k}
\def\bu{\mathbf u}
\def\bs{\mathbf s}
\def\ba{\mathbf a}
\def\bW{\mathbf W}
\def\bF{\mathbf F}
\def\bU{\mathbf U}
\newtheorem{Theorem}{Theorem}[section]
\newtheorem{Lemma}{Lemma}[section]
\newtheorem{Definition}{Definition}[section]
\newtheorem{Remark}{Remark}[section]
\newtheorem{Corollary}{Corollary}[section]
\numberwithin{equation}{section}
\newcommand{\be}{\begin{equation}}
\newcommand{\ee}{\end{equation}}
\def\da{{\delta}}
\def\ga{{\gamma}}
\def\og{{\omega}}
\def\ld{{\lambda}}
\def\Bl{\Bigl}
\def\Br{\Bigr}
\def\f{\frac}
\def\vi{\varphi}
\def\va{\varepsilon}
\def\CD{{\mathcal D}}
\def\CS{{\mathcal S}}
\def\CW{{\mathcal W}}
\def\CC{{\mathbb C}}
\def\NN{{\mathbb N}}
\def\PP{{\mathbf P}}
\def\RR{{\mathbb R}}
\def\ZZ{{\mathbb Z}}
\def\Og{\Omega}
\def\sub{\substack}
\def\Ld{\Lambda}
\def\cN{\mathcal{N}}
\def\cA{\mathcal{A}}
\def\cX{\mathcal{X}}
\def\cH{\mathcal{H}}
\def\FF{\mathcal{F}}
\def\EE{\mathbf{E}}
\def\bu{\mathbf{u}}
\def\spn{\operatorname{span}}
\def\bx{\mathbf{x}}
\def\bz{\mathbf{z}}
\def\bx{\mathbf{x}}
\DeclareSymbolFont{fouriersymbols}{FMS}{futm}{m}{n}
\DeclareSymbolFont{fourierlargesymbols}{FMX}{futm}{m}{n}
\DeclareMathDelimiter{\VT}{\mathord}{fouriersymbols}{152}{fourierlargesymbols}{147}
\DeclarePairedDelimiter\floor{\lfloor}{\rfloor}
\begin{document}

\title{Random points are good for universal discretization}

\author{ F. Dai and   V. Temlyakov 	\footnote{
		The first named author's research was partially supported by NSERC of Canada Discovery Grant
		RGPIN-2020-03909.
		The second named author's research was supported by the Russian Science Foundation (project No. 23-71-30001)
at the Lomonosov Moscow State University.
  }}

\newcommand{\Addresses}{{
  \bigskip
  \footnotesize

  F.~Dai, \textsc{ Department of Mathematical and Statistical Sciences\\
University of Alberta\\ Edmonton, Alberta T6G 2G1, Canada\\
E-mail:} \texttt{fdai@ualberta.ca }

 \medskip
  V.N. Temlyakov, \textsc{University of South Carolina,\\ Steklov Institute of Mathematics,\\  Lomonosov Moscow State University,\\ and Moscow Center for Fundamental and Applied Mathematics.
  \\
E-mail:} \texttt{temlyak@math.sc.edu}

}}
\maketitle

\begin{abstract}{

There has been significant progress in the study of sampling discretization of integral norms for  both  a designated  finite-dimensional function space and   a finite collection of such function spaces (universal discretization).  Sampling discretization results  turn out to be very useful  in various applications, particularly in  sampling recovery.
 Recent sampling discretization results  typically provide existence of good sampling points for discretization. 
  In this paper, we show  that  independent and  identically distributed  random  points provide good  universal discretization with high probability. Furthermore, we demonstrate that a simple greedy algorithm based on those  points that are good for universal discretization  provides excellent sparse recovery results in the square norm.

	 }\end{abstract}

{\it Keywords and phrases}: Sampling discretization, universality, recovery.

{\it MSC classification 2000:} Primary 65J05; Secondary 42A05, 65D30, 41A63.

\section{Introduction}
\label{I}

In this paper, we continue the discussion from \cite{DT} on  universal discretization of integral norms for collections of finite-dimensional subspaces that are  spanned by functions  from  a finite  dictionary of  uniformly bounded functions on a domain.   Let us first  describe  some necessary  notations and  concepts related to  universal discretization for the rest of this paper.

Let  $\Omega$ be a locally compact Hausdorff space   equipped  with a Borel  probability measure $\mu$.  For  $1\le p\leq  \infty$, we denote by  $L_p(\Omega):=L_p(\Omega,\mu)$    the  Lebesgue  space $L_p$
defined with respect to the measure $\mu$ on $\Omega$,  and by  $\|\cdot\|_p$  the  norm of $L_p(\Og)$.   Throughout the paper, we will use a slight abuse of the notation that $L_\infty(\Og)$ denotes the space of all uniformly bounded measurable functions on $\Og$ with norm
$\|f\|_\infty=\sup_{x\in \Og} |f(x)|$.
Discretization of the $L_p$ norm refers to the process of replacing the measure $\mu$ with a discrete measure $\mu_m$ supported  on a finite subset $\xi = \{\xi^j\}_{j=1}^m \subset \Omega$. This involves substituting integration with respect to the measure $\mu$ with an appropriate  sum of evaluations of $f$ 
 at the points $\xi^j$, $j=1,\cdots,m$.  This approach to discretization is commonly referred to as {\it sampling discretization}.
  We now  formulate explicitly   both the sampling discretization problem (also known as the Marcinkiewicz discretization problem) and the problem of universal discretization.

{\bf The sampling discretization problem.} Let 
$X_N\subset L_p$ be an $N$-dimensional subspace of $L_p(\Omega,\mu)$ with  $1\le p< \infty$  (the index $N$ here, usually, stands for the dimension of $X_N$).  We shall always assume that every function in $X_N$ is defined everywhere on $\Og$, and  for every $x\in \Og$, 
\[ \sup\{|f(x)|:\  \ f\in X_N,\  \|f\|_p\leq 1\} <\infty.\]  We say that $X_N$  admits the {\it  Marcinkiewicz-type discretization theorem} with parameters $m\in \N$ and $p$ and positive constants $C_1\le C_2$ if there exists a set $\xi := \{\xi^j\}_{j=1}^m $ of $m$ points in $\Og$
such that for any $f\in X_N$  
\be\label{A1}
C_1\|f\|_p^p \le \frac{1}{m} \sum_{j=1}^m |f(\xi^j)|^p \le C_2\|f\|_p^p.
\ee
This  definition can be slightly modified to include the case $p=\infty$ as well.

{\bf The  problem of universal discretization.} Let $\cX:= \{X(n)\}_{n=1}^k$ be a finite  collection of finite-dimensional  function  spaces on $\Og$.  Given $1\leq p<\infty$, we say that a set $\xi:= \{\xi^j\}_{j=1}^m \subset \Omega $ provides {\it universal discretization} of  the $L_p$ norm for  the collection $\cX$ with  positive constants $C_1, C_2$ if for each $n\in\{1,\dots,k\}$ and any $f\in X(n)$ 
\[
C_1\|f\|_p^p \le \frac{1}{m} \sum_{j=1}^m |f(\xi^j)|^p \le C_2\|f\|_p^p.
\]
This  definition can be slightly modified to include the case $p=\infty$ as well.
 For the sake of convenience in later applications, we  say  a finite  set of points in $ \Omega $   provides {\it   universal discretization}   for  a  collection  $\cX$ of finite-dimensional spaces  if it provides {\it universal discretization} of the $L_2$ norm for the collection $\cX$  with constants $\f12$ and $\f32$; that is, 	\be\label{I3}
 \frac{1}{2}\|f\|_2^2 \le \frac{1}{m} \sum_{j=1}^m |f(\xi^j)|^2\le \frac{3}{2}\|f\|_2^2\quad \text{for any}\quad f\in \bigcup_{V\in\cX} V .
 \ee
 Additionally, 
we  define  $m(\cX)$ to be the smallest positive integer  $m$ for which there exists a set $\xi$ of $m$ points in $\Og$ which
provides  universal discretization (\ref{I3}) for the collection $\cX$. \footnote{We define $m(\cX)=\infty$ if there does not exist such  a finite set  $\xi$ for any positive integer $m$.}

%
%
%
%

 We  point out that the concept of universality is well known in approximation theory. For instance, the reader can find a discussion of universal cubature formulas in \cite{VTbookMA}, Section 6.8.

Given  $x\in \RR$, we use the notation $ \floor*{x}$ (resp.  $\left \lceil{x}\right \rceil $)  to denote  the largest (resp. smallest)  integer $\leq  x$ (resp.,  $\ge x$).

In this paper, our discussion  will focus on  universal discretization
for  special collections of subspaces generated by a finite dictionary of  bounded  functions on $\Og$.
Throughout the paper, we will write our dictionary in the form 
 $\D_N=\{\vi_i\}_{i=1}^N$, where $\vi_1,\cdots, \vi_N$ are bounded  functions on $\Og$.
Given an integer $1\leq v\leq N$, we denote by $\mathcal{X}_v(\D_N)$ the collection of all linear spaces spanned by  $\{\vi_j:\  \ j\in J\}$  with $J\subset \{1,2,\cdots, N\}$ and $|J|=v$. A function  $f:\Og\to \CC$ is said to be $v$-sparse with respect to the dictionary $\CD_N$ if it belongs to a  linear space from the collection $\mathcal{X}_v(\D_N)$. 
We 
denote by $\Sigma_v(\D_N)$  the set of all $v$-sparse functions with respect to the dictionary $\CD_N$. In other words, we have
\begin{align*}
\Sigma_v(\D_N):&= \bigcup_{V\in\cX_v(\D_N)} V.
\end{align*}
Given $1\leq p<\infty$, we also define
\[\Sigma_v^p(\D_N):=\{ f\in \Sigma_v(\D_N):\  \ \|f\|_p\leq 1\},\  \ v=1,2,\cdots, N.\]

A  dictionary  
 $ \CD_N:=\{\ff_j\}_{j=1}^N $ is   called  uniformly bounded if 
 \be \label{I9}\sup_{x\in\Og} |\vi_j(x)|\leq 1,\   \ j=1,2,\cdots, N.\ee
 It  is called   a uniformly bounded Riesz basis if 
  (\ref{I9}) is satisfied, and there exist constants $0< R_1 \le R_2 <\infty$ such that for   any  $(a_1,\cdots, a_N)\in\CC^N$,
\begin{equation}\label{Riesz0}
R_1 \left( \sum_{j=1}^N |a_j|^2\right)^{1/2} \le \left\|\sum_{j=1}^N a_j\ff_j\right\|_2 \le R_2 \left( \sum_{j=1}^N |a_j|^2\right)^{1/2}.
\end{equation}

Next, we  give  a brief summary of recent results  on universal sampling discretization and its various applications that were obtained in \cite{DT} and \cite{DT3}.
These results give  some background information for the discussion  in the current paper. 	 

  Theorem A below is one of the main results of \cite{DT}. It  ensures  the existence of a set of at most  $Cv(\log N)^2(\log v)^2$ points which provides universal discretization of  $L_p$ norms of functions from $\Sigma_v(\CD_N)$ for $1\leq p\leq 2$ under the assumptions \eqref{I9} and \eqref{Riesz0} on the dictionary $\CD_N$.\\

{\bf Theorem A.}\cite[Theorem 1.3]{DT}\  \ {\it  Let $\CD_N=\{\vi_j\}_{j=1}^N$  be  a uniformly bounded Riesz basis satisfying \eqref{I9} and \eqref{Riesz0} for some constants $0<R_1\leq R_2$. Then 
for each  $1\le p\le 2$, and  any integer $1\leq v\leq N$,  there exists a set of 
	$m$ points
	$\xi^1,\cdots, \xi^m \in  \Omega$ with
	$
	m \le Cv(\log N)^2(\log(2v))^2
	$
	such that the inequalities 
	\begin{equation}\label{1.4u}
	\frac{1}{2}\|f\|_p^p \le \frac{1}{m}\sum_{j=1}^m |f(\xi^j)|^p \le \frac{3}{2}\|f\|_p^p
	\end{equation}
	hold for any $f\in  \Sigma_v(\D_N)$, where  $C=C(p, R_1,R_2)>1$ is a constant depending only on $p$, $R_1$ and $R_2$. \\
}

Theorem A  gives  the following  upper bound on the minimal number of points for good  universal discretization:
\begin{equation}\label{1-6} m\Bl(\mathbf{\mathcal{X}}_v(\D_N)\Br)\leq C v(\log N)^2(\log (2v))^2.\end{equation}  This upper bound   is linear
in $v$ with  extra logarithmic terms in $N$ and $v$, and  is  reasonably good since  the   lower bound $m\Bl(\mathbf{\mathcal{X}}_v(\D_N)\Br)\ge v$
 holds trivially.  
 A weaker upper bound which  is  quadratic in $v$ was previously obtained.   We  refer  to \cite{DT}, \cite{DPTT}, and \cite{KKLT} for historical comments.

In  \cite{DT3} we showed that points  that provide  good universal discretization can be used 
for sparse recovery of multivariate smooth functions. In particular, we used the universal discretization results for the $L_2$ norm 
for analyzing a theoretical sparse recovery algorithm with respect to  a special dictionary $\D_N$, and  proved  the  
Lebesgue-type inequalities for the corresponding algorithm. 
 See Section \ref{ssr} for a more detailed description of the results of  \cite{DT3}.

We now proceed with  a detailed description of the main results of this paper. Our first result gives an   improvement of Theorem A.  

\begin{Theorem}\label{IT1}  Assume that $ \CD_N=\{\ff_j\}_{j=1}^N\subset L_\infty(\Og)$ is a  dictionary of $N$ uniformly  bounded functions on $\Og$ such that  \eqref{I9} is satisfied and 
	there exists a constant $K\ge 1$ such that   for any $(a_1,\cdots, a_N) \in\bbC^N,$
	\begin{equation}\label{Riesz}
	\sum_{j=1}^N |a_j|^2 \le K  \left\|\sum_{j=1}^N a_j\ff_j\right\|^2_2 .
	\end{equation}
	Let $\xi^1,\cdots, \xi^m$ be independent 
	and identically distributed 
	random points on $\Og$  with the probability distribution  $\mu$.
	Then for  any  $1\le p\le 2$ and 
	 any   integer  $1\leq v\leq N$, the inequalities 
	\begin{equation}\label{1.4u}
	\frac{1}{2}\|f\|_p^p \le \frac{1}{m}\sum_{j=1}^m |f(\xi^j)|^p \le \frac{3}{2}\|f\|_p^p,\   \   \ \forall f\in  \Sigma_v(\CD_N)
	\end{equation}
	hold with probability $\ge 1-2 \exp\Bl( -\f {cm}{Kv\log^2 (2Kv)}\Br)$, provided that
	$$
	m \ge  C Kv \log N\cdot (\log(2Kv ))^2\cdot (\log (2Kv )+\log\log N).
	$$
	where  $C, c$  are positive constants depending only on $p$.
\end{Theorem}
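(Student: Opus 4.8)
The plan is to fix $p\in[1,2]$ and the sparsity level $v$, set $g_f:=|f|^p$, and reduce the claim to a single uniform deviation bound, namely that
\[
Z:=\sup_{f\in\Sigma_v^p(\CD_N)}\Bigl|\frac1m\sum_{j=1}^m g_f(\xi^j)-\|f\|_p^p\Bigr|\le \tfrac12 ,
\]
which, after normalizing $\|f\|_p=1$, is exactly $(\ref{1.4u})$. The decisive a priori ingredient is a Nikolskii-type inequality holding \emph{uniformly over the whole collection} $\cX_v(\CD_N)$. For each $V\in\cX_v(\CD_N)$ choose an $L_2$-orthonormal basis $u_1,\dots,u_v$; writing $u_i=\sum_{j\in J}A_{ij}\ff_j$, orthonormality reads $AMA^\ast=I$ with $M$ the Gram matrix of $\{\ff_j\}_{j\in J}$, whence $A^\ast A=M^{-1}$ and $\|A\|^2=\lambda_{\max}(M^{-1})=1/\lambda_{\min}(M)\le K$ by $(\ref{Riesz})$. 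Combined with $(\ref{I9})$ this gives $\sum_{i=1}^v|u_i(x)|^2=\|A(\ff_j(x))_{j\in J}\|^2\le Kv$ for every $x$, and then, via Cauchy--Schwarz and the elementary interpolation $\|f\|_2^2\le\|f\|_\infty^{2-p}\|f\|_p^p$, one obtains $\|f\|_\infty\le (Kv)^{1/p}\|f\|_p$ for all $f\in\Sigma_v(\CD_N)$. Consequently, for $\|f\|_p=1$ the summands satisfy $0\le g_f\le Kv$ and $\bE[g_f^2]=\|f\|_{2p}^{2p}\le\|f\|_\infty^{p}\le Kv$, so each $g_f(\xi^j)$ is bounded in range and in variance by $Kv$: precisely the Bernstein data for the concentration step.

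Second, I would bound $\bE Z$ by symmetrization followed by chaining. Introducing Rademacher signs, Dudley's entropy bound controls the symmetrized process through $\tfrac1{\sqrt m}\int_0^{D}\sqrt{\log\cN(\Sigma_v^p,\rho,\e)}\,d\e$, where $\rho$ is the mixed (sub-Gaussian/sub-exponential) metric dictated by the previous paragraph and $D\lesssim (Kv)^{1/2}$. The crucial point, which keeps the final bound \emph{linear} in $v$, is the entropy estimate
\[
\log\cN\bigl(\Sigma_v^p(\CD_N),\rho,\e\bigr)\ \lesssim\ \log\binom{N}{v}+v\log\frac{C}{\e}\ \lesssim\ v\log N+v\log\frac1\e ,
\]
in which the combinatorial term $\log\binom Nv\le v\log(eN/v)$ accounts for the choice of the $v$-element support $J$, while $v\log(C/\e)$ covers the unit ball of each fixed $v$-dimensional $V_J$ (the norm-equivalence constants being supplied by the Nikolskii inequality). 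Splitting the entropy integral into its sub-Gaussian and sub-exponential pieces produces the two factors $\log(2Kv)$; carrying the $\sqrt{v\log N}$ through and requiring the outcome to be $\le\tfrac14$ is what forces $m\gtrsim Kv\log N\,(\log 2Kv)^2(\log 2Kv+\log\log N)$, i.e.\ the stated threshold, and makes $\bE Z\le\tfrac14$.

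Third, I would upgrade the estimate on $\bE Z$ to the advertised exponential tail. Since $Z$ is the supremum of a centered empirical process with summands bounded by $Kv$ in both range and variance, a Talagrand/Bousquet concentration inequality yields $\bP(Z\ge \bE Z+t)\le\exp(-cmt^2/(Kv+Kv\,t))$; more precisely, running the concentration through the generic-chaining increments, whose controlling scale is $\gamma\lesssim Kv\log^2(2Kv)$, gives a tail of scale $Kv\log^2(2Kv)$. Taking $t=\tfrac14$ together with $\bE Z\le\tfrac14$ then gives $Z\le\tfrac12$ with probability at least $1-2\exp\!\bigl(-cm/(Kv\log^2(2Kv))\bigr)$, which is the conclusion.

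The main obstacle is the second step. Everything hinges on arranging the chaining so that the complexity of the $\binom Nv$ supports enters only \emph{additively}, through $\log\binom Nv\asymp v\log N$ inside one entropy integral, rather than through a naive union bound over subspaces: the latter would multiply the per-subspace failure probability $e^{-cm/(Kv)}$ by $N^v$ and degrade the sample requirement to the quadratic order $Kv^2\log N$. A secondary difficulty is the familiar circularity between the empirical metric $L_2(\mu_m)$ and the true metric $L_2(\mu)$ in Dudley's bound; I would resolve it using the $L_\infty$ Nikolskii bound to compare the two metrics at the cost of the factor $Kv$ already present, and I would handle the full range $1\le p\le2$ by the inequality $\bigl||a|^p-|b|^p\bigr|\le p(|a|+|b|)^{p-1}|a-b|$ to transfer increments of $f$ to increments of $g_f=|f|^p$.
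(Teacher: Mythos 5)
Your first paragraph (the uniform Nikolskii bound $\|f\|_\infty\le (Kv)^{1/p}\|f\|_p$ on $\Sigma_v(\CD_N)$, derived from \eqref{I9} and \eqref{Riesz}) is correct and matches the paper's inequality \eqref{1-2:eq}, and the Talagrand/Bousquet upgrade in your third paragraph would be fine if the middle step worked. But the middle step contains the fatal gap: the entropy estimate $\log\cN\bigl(\Sigma_v^p(\CD_N),\rho,\e\bigr)\lesssim v\log N+v\log\frac{1}{\e}$ cannot yield a sample complexity linear in $v$, and your own Dudley integral shows it. With diameter $D\asymp(Kv)^{1/2}$, the scale-independent part of your entropy bound alone contributes
\[
\frac1{\sqrt m}\int_0^{D}\sqrt{v\log N}\,d\e\;\asymp\;\sqrt{\frac{Kv^{2}\log N}{m}},
\]
so forcing $\bE Z\le\frac14$ requires $m\gtrsim Kv^{2}\log N$ --- precisely the quadratic bound you set out to avoid. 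The danger is not the union bound versus chaining dichotomy you emphasize (your combinatorial term does enter additively); it is that the ``choose a support, then cover a $v$-dimensional ball'' net charges $v\log N$ at \emph{every} scale up to the diameter $\sqrt{Kv}$, and Dudley integrates that cost over all scales. This parametric net is exactly what the paper's Lemma~\ref{lem-3-4} uses, and that lemma is correspondingly stated with the quadratic requirement $m\gtrsim K\va^{-2}v^{2}\log N$; the paper uses it only as an auxiliary, coarse step.

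What the linear-in-$v$ theorem actually requires is an entropy bound that decays like $\e^{-2}$ with a prefactor only linear in $v$, namely $\e_k(\Sigma_v^2(\CD_N),L_\infty)\lesssim\sqrt{Kq_v\log N}\,(v/k)^{1/2}$, equivalently $\cH_\e\lesssim Kq_v\log N\cdot v/\e^{2}$: at the top scale $\e\asymp\sqrt{Kv}$ this costs only $O(q_v\log N)$, independent of $v$, which is why the entropy integral then produces $m\asymp Kv\log N$ up to logarithmic factors. This is a Maurey-type estimate obtained in the paper from greedy $m$-term approximation in the smooth spaces $L_p$, $2\le p<\infty$ (Theorem~\ref{XT4}) combined with the entropy-number inequality of Theorem~\ref{XT3}; it cannot be extracted from the support-plus-ball covering. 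Moreover, transferring it to $L_\infty$ needs a Nikolskii inequality $\|f\|_\infty\le C_0\|f\|_{q_v}$ on $\Sigma_{2v}(\CD_N)$ with a large finite exponent $q_v$ (condition \eqref{X5a}), which is unavailable for a general measure $\mu$. The paper manufactures it by a two-stage sampling argument your outline lacks entirely: first draw $m_1\asymp m\cdot Kv^{2}\log(eN/v)$ auxiliary i.i.d.\ points and establish a coarse discretization by the quadratic-cost Lemma~\ref{lem-3-4}; on the resulting discrete measure the needed inequality holds automatically with $q_v\asymp\log m_1\asymp\log(2Kv)+\log\log N$ (this is the origin of the factor $\log(2Kv)+\log\log N$ in the final threshold); then subsample $m$ of the $m_1$ points and apply the conditional chaining result (Theorem~\ref{XT2}) in the discrete space, finishing with a conditional-probability computation. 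Without both ingredients --- the $\e^{-2}$-decaying entropy estimate and the resampling trick that supplies the Nikolskii inequality --- your argument stalls at $m\gtrsim Kv^{2}\log N$ and cannot reach the stated bound.
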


Several remarks are in order.

\begin{Remark}
		It is easily seen that \eqref{Riesz} holds for some constant $K>0$ if and only if  $\vi_1, \cdots, \vi_N$ are linearly independent in $L_2(\Og, \mu)$, in which case we can choose  $K^{-1}$ to be the smallest eigenvalue of the $N\times N$ 
	matrix $\Bl[ \langle \vi_i, \vi_j\rangle_{L_2(\mu)}\Br]_{1\leq i, j\leq N}$.
	\end{Remark}

\begin{Remark}

Theorem \ref{IT1} improves Theorem A in several ways. First, the condition \eqref{Riesz} is weaker than the condition \eqref{Riesz0}, which particularly means that  estimates in 
Theorem \ref{IT1} are independent of the constant $R_2$ in  \eqref{Riesz0}. Second, Theorem \ref{IT1} gives a better  upper bound on the number of points:
\begin{equation}\label{1-9a} m\Bl(\mathbf{\mathcal{X}}_v(\D_N)\Br)\leq C v(\log N)(\log (2v))^2 (\log (2v) +\log\log N).\end{equation}
 Here we improve the dependence on $N$ from $(\log N)^2$ in \eqref{1-6}  to  
 $(\log N)( \log\log N)$   at the cost of an additional factor $\log (2v)$. In particular, we obtain from \eqref{1-9a} that  for each fixed $1\leq v\leq N$,  
 \[ m\Bl(\mathbf{\mathcal{X}}_v(\D_N)\Br)\leq C_v (\log N)(\log\log N).\]
 It is interesting to point out that this last estimate  is almost optimal  as $N\to \infty$  in the sense that there exists  a uniformly bounded Riesz basis $\D_N$ for each $N\in\NN$ such that  $$ m\Bl(\mathbf{\mathcal{X}}_v(\D_N)\Br)\ge c \log N,\  \ v=1,2,\cdots, N,$$
 where $c>0$ is an absolute constant.   See Section \ref{lb} for details. 
 Finally, Theorem A only ensures  the existence of a set of good  points which provides  universal discretization, while Theorem \ref{IT1} shows that  a set of  independent random points that are  identically distributed  according to $\mu$ provides good  universal discretization with high probability.  The step from the existence result  to  the statement ``with high probability for randomly chosen points" turns out to be nontrivial.

	\end{Remark}

\begin{Remark}
	The conclusion of Theorem \ref{IT1}  remains true for each integer $1\leq v\leq N$ if we replace 
	the condition \eqref{Riesz} with the following weaker one:  
	 for any $(a_1,\cdots, a_N) \in\bbC^N$, 
	$$
	\sum_{j\in J} |a_j|^2 \le K  \left\|\sum_{j\in J} a_j\ff_j\right\|^2_2,\   \   \ \forall J\in\mathcal{J}_v,\eqno{(1.7\textnormal{a})}
	$$
	where $\mathcal{J}_v$ denotes the collection of all subsets $J$ of $\{1,2,\cdots, N\}$ with $|J|=v$. 
	
\end{Remark}

Theorem \ref{IT1} has interesting  applications in  sparse sampling recovery.  As our next result shows,   a simple greedy algorithm - Weak Orthogonal Matching Pursuit (WOMP) - can effectively recover sparse functions  by utilizing points in Theorem \ref{IT1} that provide good universal discretization. To formulate this result, we need to  describe   some necessary  notations.

Let $X=(X, \|\cdot\|)$ be a Banach space of functions on $\Og$.
Let $\CD_N=\{\vi_i\}_{i=1}^N\subset X$ be a dictionary in $X$.  
For $v=1,2,\cdots$, define 
$$
\sigma_v(f,\D_N)_X := \inf_{g\in\Sigma_v(\D_N)}\|f-g\|
$$
to be the best $v$-term approximation of  $f\in X$   in the norm  of $X$ with respect to $\D_N$.
Furthermore,  for a function class $\bF\subset X$,  we define 
$$
\sigma_v(\bF,\D_N)_X := \sup_{f\in\bF} \sigma_v(f,\D_N)_X,\quad   v=1,2,\cdots$$
and $$  \sigma_0(\bF,\D_N)_X := \sup_{f\in\bF} \|f\|.
$$

The  Weak Orthogonal Matching Pursuit (WOMP)  is a  greedy algorithm defined with respect to a given dictionary $\D_N=\{\vi_i\}_{i=1}^N$  in a  Hilbert space of functions on $\Og$   equipped with the inner product $ \<\cdot,\cdot\>$ and the norm  $\|\cdot\|$.   It was also 
defined in \cite{T1} under the name Weak Orthogonal Greedy Algorithm.\\

{\bf Weak Orthogonal Matching Pursuit (WOMP).} Let $\D_N=\{\vi_i\}_{i\in I}$ be a dictionary of countably many   nonzero elements in $H$, where $I\subset \NN$.
 Let $\tau:=\{t_k\}_{k=1}^\infty\subset (0, 1]$ be a given  sequence of weakness parameters. 
Given  $f_0\in H$, we define  a sequence  $\{f_k\}_{k=1}^\infty\subset H$ of functions  for $k=1,2,\cdots$  inductively  as follows: 
\begin{enumerate}[\rm (1)]
	\item 
 $ j_k\in I$  is any  integer from the index  set $I$  satisfying
$$
|\langle f_{k-1},\psi_{j_k}\rangle | \ge t_k
\sup_{i\in I} |\langle f_{k-1},\psi_i\rangle |,
$$
where $\psi_i=\vi_i/\|\vi_i\|$ for $i\in I$.

\item  Let  $H_k := \sp \{\vi_{j_1},\dots,\vi_{j_k}\}$, and define 
$G_k(\cdot, \CD)$ to be the orthogonal projection operator from $H$   onto the space $H_k$ .

\item   Define the residual after the $k$th iteration of the algorithm by
\begin{equation*}
f_k := f_0-G_k(f_0, \D).
\end{equation*}
\end{enumerate}

In the case when $t_k=1$ for $k=1,2,\dots$,   WOMP is called the Orthogonal
Matching Pursuit (OMP). In this paper we only consider the case when  $t_k=t\in (0, 1]$ for $k=1,2,\dots$.  The term {\it weak} in the definition of the WOMP means that at step (1) we do not shoot for the optimal element of the dictionary which attains the corresponding supremum. The obvious reason for this is that we do not know in general if the optimal element exists. Another practical reason is that the weaker the assumption is, the easier it is to realize in practice. Clearly, $\vi_{j_k}$ may not be unique.  However, all the  results formulated  below  are independent of the choice of the $\vi_{j_k}$.

For the sake of convenience in later applications, we will use the notation  $\text{WOMP}\bigl(\D; t\bigr)_H$ to   denote the WOMP   defined with respect to  a given weakness parameter $t\in (0, 1]$ and   a dictionary $\CD$ in a Hilbert  space $H$.

Now we are in a position to formulate our second theorem. We will  consider the Hilbert space $L_2(\Omega_m,\mu_m)$ 
 instead of  $L_2(\Omega,\mu)$,  
where $\Omega_m=\{\xi^\nu\}_{\nu=1}^m$ is a set of points that provides a good universal discretization,  and $\mu_m$ is the uniform probability measure on $\Og_m$, i.e., 
$\mu_m\{ \xi^\nu\} =1/m$, $\nu=1,\dots,m$.    Let $\CD_N(\Omega_m)$ denote  the restriction 
of a dictionary  $\CD_N$ on the set  $\Omega_m$.
Theorem \ref{IT2} below  guarantees that the simple greedy algorithm WOMP gives the corresponding Lebesgue-type inequality in the norm $L_2(\Omega_m,\mu_m)$, and hence 
 provides good sparse recovery. 

\begin{Theorem}\label{IT2}  Let  $\CD_N=\{\vi_j\}_{j=1}^N$ be  a uniformly bounded Riesz basis  in $L_2(\Og, \mu)$  satisfying  \eqref{I9} and \eqref{Riesz0} for some constants $0<R_1\leq R_2<\infty$. 
Let $\Og_m=\{\xi^1,\cdots, \xi^m\}$ be a finite subset of $\Og$  that provides   universal discretization for the collection 
$\cX_u(\CD_N)$ and a given integer $1\leq u\leq N$.   Given a weakness parameter $0<t\leq 1$,   there exists a constant integer  $c=c(t,R_1,R_2)\ge 1$  such that for any integer $0\leq v\leq u/(1+c)$ and any $f_0\in L_\infty(\Omega)$,   the  $$\text{WOMP}\Bl(\D_N(\Og_m); t\Br)_{L_2(\Omega_m,\mu_m)}$$    applied to $f_0$  gives 
\be\label{mp}
\|f_{cv}    \|_{L_2(\Omega_m,\mu_m)} \le C\sigma_v(f_0,\CD_N(\Omega_m))_{L_2(\Omega_m,\mu_m)}, 
\ee
and
\be\label{mp2}
\|f_{c v}\|_{L_2(\Omega,\mu)} \le C\sigma_v(f_0,\CD_N)_\infty,
\ee
where $C>1$ is an  absolute constant, and $f_k$ denotes the residue of $f_0$ after the $k$-th iteration of the algorithm.
 \end{Theorem}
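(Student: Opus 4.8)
The plan is to transport the recovery problem to the discrete Hilbert space $L_2(\Og_m,\mu_m)$, apply a Lebesgue-type inequality for the WOMP with respect to a Riesz dictionary there, and then translate the bound back to $L_2(\Og,\mu)$ using universal discretization. First I would record that the restricted dictionary $\CD_N(\Og_m)$ is a Riesz system of order $u$. Indeed, for every $J\subset\{1,\dots,N\}$ with $|J|\le u$ and every $(a_j)_{j\in J}$, the function $\sum_{j\in J}a_j\vi_j$ belongs to $\Sigma_u(\CD_N)$, so universal discretization gives
\[
\frac12\Bl\|\sum_{j\in J}a_j\vi_j\Br\|_2^2\le\Bl\|\sum_{j\in J}a_j\vi_j\Br\|_{L_2(\Og_m,\mu_m)}^2\le\frac32\Bl\|\sum_{j\in J}a_j\vi_j\Br\|_2^2,
\]
and combining this with \eqref{Riesz0} yields
\[
\frac{R_1^2}{2}\sum_{j\in J}|a_j|^2\le\Bl\|\sum_{j\in J}a_j\vi_j\Br\|_{L_2(\Og_m,\mu_m)}^2\le\frac{3R_2^2}{2}\sum_{j\in J}|a_j|^2.
\]
In particular each $\|\vi_j\|_{L_2(\Og_m,\mu_m)}$ is comparable to $1$, so the normalized atoms driving the WOMP are well defined.

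Next, with this order-$u$ Riesz (restricted isometry) property in hand, I would invoke the Lebesgue-type inequality for the weak orthogonal greedy algorithm with respect to a dictionary obeying such a condition. This produces a constant $c=c(t,R_1,R_2)\ge1$ so that, whenever $(c+1)v\le u$, after $cv$ iterations the residual satisfies $\|f_{cv}\|_{L_2(\Og_m,\mu_m)}\le C\,\sigma_v(f_0,\CD_N(\Og_m))_{L_2(\Og_m,\mu_m)}$, which is \eqref{mp}. The constraint $v\le u/(1+c)$ is exactly what is needed: every sparse combination appearing in the first $cv$ steps, together with a near-best $v$-term approximant, involves at most $(c+1)v\le u$ atoms, so the order-$u$ Riesz estimate above suffices to run the greedy analysis.

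Finally, to obtain \eqref{mp2} I would pass from the discrete to the continuous norm. Let $g^\ast\in\Sigma_v(\CD_N)$ be a near-best $v$-term approximant of $f_0$ in $L_\infty(\Og)$. Since $G_{cv}(f_0)$ is $cv$-sparse and $(c+1)v\le u$, the difference $G_{cv}(f_0)-g^\ast$ lies in $\Sigma_u(\CD_N)$, so universal discretization lets me exchange $\|\cdot\|_{L_2(\Og,\mu)}$ and $\|\cdot\|_{L_2(\Og_m,\mu_m)}$ on it. The triangle inequality gives
\[
\|f_{cv}\|_{L_2(\Og,\mu)}\le\|f_0-g^\ast\|_{L_2(\Og,\mu)}+\|g^\ast-G_{cv}(f_0)\|_{L_2(\Og,\mu)};
\]
the first term is at most $\|f_0-g^\ast\|_\infty$ because $\mu$ is a probability measure, while the second is bounded via universal discretization by a constant times $\|g^\ast-G_{cv}(f_0)\|_{L_2(\Og_m,\mu_m)}\le\|f_0-g^\ast\|_{L_2(\Og_m,\mu_m)}+\|f_{cv}\|_{L_2(\Og_m,\mu_m)}$. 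Both pieces on the right are controlled by $\|f_0-g^\ast\|_\infty$ (the first again since $\mu_m$ is a probability measure, the second through \eqref{mp} and the bound $\sigma_v(f_0,\CD_N(\Og_m))_{L_2(\Og_m,\mu_m)}\le\|f_0-g^\ast\|_{L_2(\Og_m,\mu_m)}\le\|f_0-g^\ast\|_\infty$). Collecting the constants yields \eqref{mp2}.

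\textbf{The main obstacle} is the second step: establishing, or precisely quoting, the Lebesgue-type inequality for the WOMP under an order-$u$ Riesz/RIP hypothesis, with the number of iterations linear in the sparsity $v$ and the multiplicative constant $c$ depending only on $t,R_1,R_2$. The first and third steps are essentially bookkeeping with the universal discretization inequality and the triangle inequality, whereas the greedy-approximation engine behind \eqref{mp} carries the genuine content.
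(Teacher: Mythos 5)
Your proposal is correct and follows essentially the same route as the paper: use universal discretization together with \eqref{Riesz0} to show that the restricted dictionary $\CD_N(\Omega_m)$ inherits a Riesz-type (equivalently, unconditionality) property in $L_2(\Omega_m,\mu_m)$, invoke a Lebesgue-type inequality for the WOMP to get \eqref{mp}, and then transfer back to $L_2(\Omega,\mu)$ by the triangle inequality and discretization, exactly as in the paper's derivation of \eqref{mp2}. The ``main obstacle'' you flag is precisely the result the paper quotes, namely Theorem \ref{ssrT1} (Corollary I.1 of Livshitz--Temlyakov), which is stated under the $(u,D)$-unconditional property {\bf UP}; your order-$u$ Riesz bound implies that property with constant $U=\sqrt{3}\,R_2/R_1$, so your argument goes through verbatim.
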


 \begin{Corollary}\label{IC1} Under the  assumptions of  Theorem \ref{IT2},  the $$\text{WOMP}\Bl(\D_N(\Og_m); t\Br)_{L_2(\Omega_m,\mu_m)}$$  recovers every $v$-sparse $f_0\in \Sigma_v(\CD_N)$ exactly for any positive integer $v$ with $(1+c)v\leq u$. 
\end{Corollary}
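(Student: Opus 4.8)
The plan is to read the corollary off directly from Theorem \ref{IT2}, the key observation being that both right-hand sides in \eqref{mp} and \eqref{mp2} are best $v$-term approximation errors of $f_0$, and these vanish precisely when $f_0$ is itself $v$-sparse.

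First I would fix a positive integer $v$ with $(1+c)v \le u$ (so that $v \le u/(1+c)$, the range required by Theorem \ref{IT2}) and a function $f_0 \in \Sigma_v(\CD_N)$. Since $f_0$ is a finite linear combination of the uniformly bounded $\vi_j$, it lies in $L_\infty(\Og)$, so Theorem \ref{IT2} applies. Taking $g = f_0$ in the infimum defining $\sigma_v(f_0, \CD_N)_\infty$ gives $\sigma_v(f_0, \CD_N)_\infty = 0$, and substituting into \eqref{mp2} yields $\|f_{cv}\|_{L_2(\Og, \mu)} \le C \cdot 0 = 0$. Hence the residual $f_{cv} = f_0 - G_{cv}(f_0, \CD_N(\Og_m))$ vanishes $\mu$-almost everywhere.

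The only step needing care is promoting this $L_2(\Og,\mu)$-nullity to genuine exact recovery on all of $\Og$. I would note that $f_{cv}$ lies in $\operatorname{span}(\CD_N)$, being the difference of the $v$-sparse $f_0$ and the projection $G_{cv}(f_0, \CD_N(\Og_m))$, which is supported on at most $cv$ dictionary indices; writing $f_{cv} = \sum_j c_j \vi_j$ and invoking the lower Riesz bound in \eqref{Riesz0}, the estimate $R_1 (\sum_j |c_j|^2)^{1/2} \le \|f_{cv}\|_2 = 0$ forces all $c_j = 0$, so $f_{cv} \equiv 0$ on $\Og$ and $G_{cv}(f_0, \CD_N(\Og_m)) = f_0$ everywhere, which is the asserted exact recovery. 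An equivalent route uses \eqref{mp} in place of \eqref{mp2}: there $\sigma_v(f_0, \CD_N(\Og_m))_{L_2(\Og_m, \mu_m)} = 0$ gives $f_{cv} = 0$ on $\Og_m$, and since $f_{cv} \in \Sigma_{(1+c)v}(\CD_N) \subseteq \Sigma_u(\CD_N)$ while $\Og_m$ provides universal discretization for $\cX_u(\CD_N)$, the left inequality of \eqref{I3} again forces $\|f_{cv}\|_2 = 0$. Either way, the sole content beyond Theorem \ref{IT2} is this linear-independence argument, so I expect no substantial obstacle.
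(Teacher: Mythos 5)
Your proof is correct and follows the route the paper intends: the paper states this corollary without proof as an immediate consequence of Theorem \ref{IT2}, since for $v$-sparse $f_0$ both right-hand sides of \eqref{mp} and \eqref{mp2} vanish, forcing the residual $f_{cv}$ to be zero. Your additional step --- using the lower Riesz bound in \eqref{Riesz0} (or, alternatively, the universal discretization inequality \eqref{I3} applied to $f_{cv}\in\Sigma_u(\CD_N)$) to upgrade $\|f_{cv}\|_{L_2(\Omega,\mu)}=0$ to the pointwise identity $G_{cv}(f_0,\CD_N(\Omega_m))=f_0$ on all of $\Og$ --- is precisely the detail the paper leaves implicit, and you carry it out correctly.
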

     
 Theorem \ref{IT2} has an interesting application in   optimal sampling recovery. 
 For a function class $\bF\subset \cC(\Omega)$,  we define
$$
\varrho_m^o(\bF,L_p) := \inf_{\Psi; \xi } \sup_{f\in \bF}\|f-\Psi(f(\xi^1),\dots,f(\xi^m))\|_p,
$$
where the infimum is taken  over all  mappings $\Psi : \bbC^m \to   L_p(\Omega,\mu)$, and all subsets $\xi=\{\xi^\nu\}_{\nu=1}^m\subset \Og$ of $m$ points $\xi^1,\cdots, \xi^m\in\Og$.
Here, the superscript {\it o} stands for optimal. 
The reader can find some recent results  on   optimal sampling recovery in Section \ref{ssr}.
As  a direct corollary of 
Theorem \ref{IT2}, we have:

\begin{Corollary}\label{IT3}  Let  $\CD_N$ be  a uniformly bounded Riesz basis  in $L_2(\Og, \mu)$  satisfying  \eqref{I9} and \eqref{Riesz0} for some constants $0<R_1\leq R_2<\infty$.  Then  there exists a constant  $c=C(R_1,R_2)\ge 1$  depending only on  $R_1$ and $R_2$  such that  for any function class $\bF\subset \cC(\Omega)$, and  any positive integers $u$ and $m$ with  $1+c\leq u\leq N$ and $m\ge m(\cX_u(\CD_N))$, we have 
\be\label{or}
\varrho_m^o(\bF,L_2) \le C\sigma_{\floor* {u/(1+c)}}(\bF,\CD_N)_\infty,
\ee
where $C>1$ is an absolute constant.
 \end{Corollary}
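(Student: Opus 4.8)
The plan is to exhibit an explicit pair $(\Psi,\xi)$ that witnesses the desired upper bound on $\varrho_m^o(\bF,L_2)$, and then to invoke \eqref{mp2} of Theorem \ref{IT2} termwise in $f_0$. Fix the weakness parameter $t=1$ and let $c=c(R_1,R_2)\ge 1$ be the constant supplied by Theorem \ref{IT2}; set $v:=\floor*{u/(1+c)}$, so that $(1+c)v\le u$ and the hypothesis $0\le v\le u/(1+c)$ of that theorem is met. Since $m\ge m(\cX_u(\CD_N))$, the definition of $m(\cX_u(\CD_N))$ guarantees a set $\Og_m=\{\xi^1,\dots,\xi^m\}$ of $m$ points providing universal discretization for the collection $\cX_u(\CD_N)$; I would take $\xi=\Og_m$.

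Next I would pin down the recovery map $\Psi:\bbC^m\to L_2(\Og,\mu)$. Given a sample vector, run $\text{WOMP}\Bl(\D_N(\Og_m);1\Br)_{L_2(\Og_m,\mu_m)}$ for $cv$ iterations and output the resulting approximant $G_{cv}(f_0,\CD_N)=\sum_{\ell=1}^{cv} a_\ell\vi_{j_\ell}$. The key point making this legitimate is that every quantity used by the algorithm---the inner products $\<f_0,\psi_i\>_{L_2(\Og_m,\mu_m)}$, the selected indices $j_1,\dots,j_{cv}$, and the projection coefficients $a_\ell$---depends only on the values $\{f_0(\xi^\nu)\}_{\nu=1}^m$ and on the fixed numbers $\{\vi_j(\xi^\nu)\}$; hence $\Psi$ is a genuine function of the samples alone. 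Moreover the output is a fixed linear combination of the full dictionary elements $\vi_{j_\ell}\in L_2(\Og,\mu)$ (each bounded, hence in $L_2(\Og,\mu)$ since $\mu$ is a probability measure), so $\Psi$ indeed maps into $L_2(\Og,\mu)$ and is an admissible competitor in the infimum defining $\varrho_m^o(\bF,L_2)$.

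With this choice, $f_0-\Psi(f_0(\xi^1),\dots,f_0(\xi^m))$ is exactly the WOMP residue $f_{cv}$ viewed as a function on $\Og$. I may assume the right-hand side of \eqref{or} is finite (otherwise there is nothing to prove); then for each $f_0\in\bF$ there is a $v$-sparse $g$ with $\|f_0-g\|_\infty<\infty$, and since $g$ is bounded this forces $f_0\in L_\infty(\Og)$, so Theorem \ref{IT2} applies to $f_0$. Inequality \eqref{mp2} then gives $\|f_0-\Psi(\dots)\|_{L_2(\Og,\mu)}=\|f_{cv}\|_{L_2(\Og,\mu)}\le C\,\sigma_v(f_0,\CD_N)_\infty$ with $C>1$ absolute. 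Taking the supremum over $f_0\in\bF$, recalling $\sigma_v(\bF,\CD_N)_\infty=\sup_{f_0\in\bF}\sigma_v(f_0,\CD_N)_\infty$ and $v=\floor*{u/(1+c)}$, yields $\varrho_m^o(\bF,L_2)\le\sup_{f_0\in\bF}\|f_0-\Psi(\dots)\|_{L_2(\Og,\mu)}\le C\,\sigma_{\floor*{u/(1+c)}}(\bF,\CD_N)_\infty$, which is \eqref{or}.

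The only real content beyond Theorem \ref{IT2} is the observation that the WOMP approximant is computable from the samples alone and extends to a bona fide function on all of $\Og$ through the full dictionary; this is the single step I would be careful to justify cleanly, since it is what makes $\Psi$ a legitimate sampling-recovery map rather than merely an object defined on $\Og_m$. Once that identification is in place, the corollary follows immediately from \eqref{mp2}, and I do not anticipate any genuine obstacle.
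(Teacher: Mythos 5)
Your proposal is correct and takes essentially the paper's own route: the paper offers no separate proof, presenting the corollary as an immediate consequence of Theorem \ref{IT2} with WOMP (run on a universal discretization set, its output extended to $\Og$ via the full dictionary) serving as the recovery map $\Psi$, which is exactly your construction, including the sup over $\bF$ and the observation that the algorithm uses only the sample values. One small imprecision to repair: for $m>m(\cX_u(\CD_N))$ the definition does \emph{not} guarantee an $m$-point set with the constants $\tfrac12,\tfrac32$ (the normalization $\tfrac1m$ changes when points are added), so instead take the optimal set of $m_0=m(\cX_u(\CD_N))$ points, enlarge it arbitrarily to $m$ points, and let $\Psi$ ignore the extra $m-m_0$ sample values --- which the definition of $\varrho_m^o(\bF,L_2)$ permits, since the infimum runs over all maps $\Psi:\bbC^m\to L_2(\Og,\mu)$.
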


The first inequality of the kind of (\ref{or}) was obtained in the  recent paper \cite{JUV} by T. Jahn, T. Ullrich, and F. Voigtlaender, where they used deep known results from compressed sensing and applied 
the $\ell_1$ minimization algorithm. In addition to function values at $m$ points their algorithm uses 
the fact $f\in \bF$ and the quantity $\sigma_v(\bF,\CD_N)_\infty$. They proved an inequality similar to (\ref{or}) for a uniformly bounded orthonormal system $\CD_N$ with an extra term (not important one) in the right hand side of (\ref{or}). Their algorithm does not provide a sparse with respect to $\CD_N$ approximant. 
We improve the result from \cite{JUV} in a number of ways. First, we get rid of the extra term in the right hand side. Second, our algorithm (WOMP) only uses the function values at $m$ points and does not 
use any information about the class $\bF$. Thus, it is universal in that sense. Third, we prove (\ref{or}) for a wider class of systems $\CD_N$. Forth, our algorithm provides $cv$-sparse with respect 
to $\CD_N$ approximant. Finally, for our algorithm we prove the Lebesgue-type inequality (\ref{mp2}) for individual functions.  

The rest of this paper is organized  as follows.  In Section \ref{X}, we present a conditional theorem, Theorem \ref{XT2}, without proof, which allows us to   establish  universal sampling discretization of the $L_p$ norms with randomly selected sampling points  by utilizing certain  entropy estimates of $\Sigma_v^p(\D_N)$ in the uniform norm. This theorem  is a direct consequence of a more general conditional theorem, Theorem \ref{thm-4-2}, which we prove in Section \ref{sec:5}. We also give several useful estimates on the entropy numbers of the set $\Sigma_v^p(\D_N)$ in the uniform norm in Section \ref{X}, following closely the techniques developed in \cite{DT}.
In Section \ref{Z}, we prove Theorem \ref{IT1} by utilizing the conditional Theorem \ref{XT2} and the entropy number estimates presented in Section \ref{X}. 

After that, in Section \ref{lb} 
we provide an example of a  uniformly bounded Riesz system 
  $\CD=\{\vi_j\}_{j=1}^\infty$  
such  that for  $\D_N=\{\vi_j\}_{j=1}^N$ with $N=1,2,\cdots$, we have
$$  m(\mathbf{\mathcal{X}}_v(\D_N))\ge c \log N,  \  \   v=1,2,\cdots, N, $$
 for some absolute constant $c>0$, meaning that universal sampling discretization for the collection 
 $\mathcal{X}_v(\D_N)$
 normally requires at least $C_v \log N$ points.  
We dedicate Section \ref{ssr} to discussing  applications of universal sampling discretization in sparse sampling recovery. In particular,    Theorem \ref{IT2} and  several other related results  are proved in Section \ref{ssr}.  Finally, in Section \ref{sec:5}, we prove a more general version of the conditional theorem stated in Section \ref{X}, which  corresponds to  the random version of Theorem 5.1 of \cite{DT}.

 \section{A conditional theorem on universal sampling discretization}
\label{X}

We  first recall the definition of entropy numbers in Banach spaces.   Let $A$ be  a compact subset  of a Banach space $(X, \|\cdot\|)$. 
Given a positive number $\e$,   the covering number $N_\e(A, X)$ of  the  set $A$ in the space $X$  is defined  to be the smallest positive integer $n$ for which there exist $n$ elements $g_1, \cdots, g_n\in A$ such that 
\[ \max_{f\in A} \min_{1\leq j\leq n}\|f-g_j\|\leq \va.\]
We denote by $\cN_\e(A,X)$
the corresponding minimal $\e$-net of the set $A$ in $X$.
The $\va$-entropy $\mathcal{H}_\va (A, X)$ of the compact set $A$  in $X$  is then  defined as
$$\mathcal{H}_\va (A, X)=\log_2 N_\e(A,X), \   \ \va>0,$$
whereas   the entropy numbers  of $A$ in the space $X$ are defined as 
$$
\e_k(A,X) := \inf\{t>0: \   \ \cH_t(A, X)\leq k\},\   \ k=0, 1,2,\cdots.
$$

The following   theorem  
  is a direct consequence of  a  more general conditional theorem that is proved in Section \ref{sec:5} (see Theorem \ref{thm-4-2}).

 \begin{Theorem}\label{XT2} Let $1\le q<\infty$ and $1\leq v\leq N$. Suppose that  $\D_N$ is a dictionary of $N$ functions from $L_\infty(\Og)$  such that the set 
 	$$
 	\Sigma_v^q(\D_N):= \{f\in \Sigma_v(\D_N)\,:\, \|f\|_q\le 1\}
 	$$
 	 satisfies 
\be\label{Z1}
\e_k(\Sigma_v^q(\D_N),L_\infty) \le  B_1 (v/k)^{1/q}, \quad k=1,2,\cdots,
\ee
and 
\begin{equation}\label{2-3b} 
\|f\|_\infty \leq B_2 v^{1/q} \|f\|_q,\   \   \forall f\in \Sigma_v(\D_N)
\end{equation}
for some constants $B_1, B_2\ge 1$. 
 	Let $\xi^1, \cdots, \xi^m\in\Og$ be independent random points   satisfying \begin{equation}\label{2-3}
	\f 1m \sum_{k=1}^m \mu_{\xi^k}=\mu,
	\end{equation}
	 where $\mu_{\xi^j}$ denotes the probability distribution of $\xi^j$. 
		If 
	$$
	m \ge  C(q)     B_1^{q} v (\log (2B_2v))^2,
	$$
	for some large constant $C(q)>1$  depending on $q$,
then the inequalities 
	$$
\f34\|f\|_q^q \le \frac{1}{m}\sum_{j=1}^m |f(\xi^j)|^q \le \f54 \|f\|_q^q,\   \ \forall f\in  \Sigma_v(\D_N)
$$
hold 
 with probability 
	\[ \ge 1- \exp\Bl[ -\f { c_q m }{B_2^q v \log^2 ( 2B_2 v)}\Br]. \]
	\end{Theorem}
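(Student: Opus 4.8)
The plan is to reduce the two-sided estimate to a single uniform deviation bound and to establish that bound by an empirical-process (symmetrization + chaining + concentration) argument. Set
\[
W:=\sup\Bl\{\,\Bl|\f1m\sum_{j=1}^m|f(\xi^j)|^q-\|f\|_q^q\Br|:\ f\in\Sigma_v^q(\D_N)\Br\}.
\]
By homogeneity of $f\mapsto|f|^q$ and of $\|\cdot\|_q^q$, the asserted two-sided inequality for all $f\in\Sigma_v(\D_N)$ is exactly the statement $W\le\f14$. Centering is legitimate because hypothesis \eqref{2-3} gives $\bE\bl[\f1m\sum_j|f(\xi^j)|^q\br]=\int_\Og|f|^q\,d\mu=\|f\|_q^q$ for each fixed $f$. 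I would first bound $\bE W$ and then upgrade to high probability. For $\bE W$, pass to a symmetrized process: using an independent ghost sample and Rademacher signs $\ve_1,\dots,\ve_m$, the symmetrization inequality for independent (not necessarily identically distributed) summands yields $\bE W\le 2\,\bE\sup_f\bl|\f1m\sum_j\ve_j|f(\xi^j)|^q\br|$, which I control conditionally on the points by chaining over $\Sigma_v^q(\D_N)$.

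Two ingredients feed the chaining, and they are exactly the two hypotheses. The covering numbers of $\Sigma_v^q(\D_N)$ in $L_\infty$ come straight from \eqref{Z1}: at scale $\va$ one has $\cH_\va(\Sigma_v^q(\D_N),L_\infty)\le B_1^q\,v\,\va^{-q}$. The second, more delicate, ingredient is the size of the chaining increments of $|f|^q$. Using the elementary inequality $\bl||a|^q-|b|^q\br|\le q\max(|a|,|b|)^{q-1}|a-b|$ together with the Nikolskii-type bound \eqref{2-3b}, a link between net points at scale $\va$ has sup-norm $M_\va\lesssim qB_2^{q-1}v^{(q-1)/q}\va$. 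For the variance of a link I would crucially exploit $\|f\|_q\le1$ rather than the (large) sup bound: combining the same pointwise inequality with Hölder's inequality so that the factor $\max(|f|,|h|)^{q-1}$ is absorbed into $\|\max(|f|,|h|)\|_q^{q}\lesssim1$, one obtains (for $1\le q\le 2$) an increment variance $\sigma_\va^2:=\||f|^q-|h|^q\|_2^2\le C_q\va^{\,q}$. The point of this estimate is the scale invariance
\[
\cH_\va(\Sigma_v^q(\D_N),L_\infty)\cdot\sigma_\va^2\ \lesssim\ B_1^q v\,\va^{-q}\cdot\va^{\,q}\ \asymp\ C_q\,B_1^q v ,
\]
i.e. the covering-number growth as $\va\downarrow0$ is exactly cancelled by the variance decay.

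Given this, the sub-Gaussian (Dudley) part of the chaining contributes $\lesssim\sigma_\va\sqrt{\cH_\va}\asymp\sqrt{C_qB_1^qv}$ per dyadic level, and there are only $L\asymp\log(2B_2v)$ relevant levels, running from the $L_\infty$-diameter $\va_0\asymp B_2v^{1/q}$ down to the scale $\va_L\asymp (B_2^{q-1}v^{(q-1)/q})^{-1}$ at which the increments become $O(1)$ in sup-norm. This gives $\bE W\lesssim L\sqrt{C_qB_1^qv}/\sqrt m$, which is $\le\f18$ precisely when $m\ge C(q)B_1^qv\log^2(2B_2v)$, reproducing the required lower bound on $m$ (the square coming from squaring the $L$ levels). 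To pass from $\bE W\le\f18$ to high probability I would apply Talagrand's concentration inequality for suprema of empirical processes (Bousquet's form): each centered summand is bounded by $2\sup_f\|\,|f|^q\|_\infty\le2B_2^qv$, and the weak variance is $\sup_f\f1m\sum_j\mathrm{Var}(|f(\xi^j)|^q)\le\sup_f\int|f|^{2q}\,d\mu\le B_2^qv$. Thus $\bP(W>\tfrac14)=\bP(W>\bE W+\tfrac18)\le\exp\!\bl(-c\,m/(B_2^qv)\br)$, which is at least as small as the stated bound $\exp\bl(-c_qm/(B_2^qv\log^2(2B_2v))\br)$.

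The step I expect to be the main obstacle is the control of the fine scales, and this is what forces the passage through the general conditional theorem (Theorem \ref{thm-4-2}). The sub-Gaussian chaining integral in the $L_2(\mu)$ metric in fact diverges logarithmically at scale $0$ — equivalently, a naive per-level union bound would require chaining all the way to $L_\infty$-scale $\va\sim v^{-(q-1)/q}$, where \eqref{Z1} makes the log-covering number as large as $\sim v^q$, degrading the requirement on $m$ from linear to polynomial in $v$. Avoiding this is the whole difficulty: the boundedness provided by \eqref{2-3b} (a sub-exponential, $\gamma_1$-type contribution) must be paid only in aggregate rather than level-by-level, so that the variance decay $\sigma_\va^2\lesssim\va^{\,q}$ — which rests on $\|f\|_q\le1$, not merely on the sup bound — can absorb the entropy growth and keep the total linear in $v$ with only the $\log^2(2B_2v)$ penalty. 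Organizing this sub-exponential accounting rigorously, and doing so uniformly over the whole range $1\le q<\infty$ (where for $q>2$ the clean cancellation above must be replaced by the refined balance of the general theorem), is the technical heart of the argument.
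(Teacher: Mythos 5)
Your opening reductions are fine (centering via \eqref{2-3}, homogeneity reducing the two-sided bound to a uniform deviation bound $W\le \tfrac14$), and your ``entropy times variance'' cancellation at coarse scales correctly reproduces the shape $m\gtrsim B_1^q v\log^2(2B_2 v)$. But the argument does not close, and you say so yourself: the sub-exponential (bounded-increment) half of the chaining is fatal. An increment at $L_\infty$-scale $\va$ has sup norm only $\lesssim q B_2^{q-1}v^{(q-1)/q}\va$, so the Bernstein term $M_\va \cH_\va/m$ grows like $\va^{1-q}$ as $\va\downarrow 0$ and, at the finest scale your additive chaining needs, costs a power of $v$ strictly bigger than one (e.g.\ $\sim v^2/m$ for $q=2$). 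Conceding that this contribution ``must be paid only in aggregate'' and that it ``forces the passage through Theorem \ref{thm-4-2}'' names the obstacle but does not overcome it. Nor does the fallback of citing Theorem \ref{thm-4-2} constitute a proof: you never actually apply it, i.e.\ never substitute $R\le B_2v^{1/q}$ (from \eqref{2-3b}) and $\cH_{c\va t}\le B_1^q v (c\va t)^{-q}$ (from \eqref{Z1}) into the integral condition \eqref{5-2-c} with $\va=\tfrac14$ to verify that it produces exactly the stated sample size and probability. That short computation (inner integral $\int_u^R t^{-q-1}\,dt\lesssim u^{-q}$, then outer integral $\int_{c\va^{1/q}}^{R}u^{-1}\,du\asymp \log(2B_2v)$, squared) \emph{is} the paper's entire proof of this theorem, and it is absent from your proposal.

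It is worth seeing why the paper's mechanism, in the proof of Theorem \ref{thm-4-2}, evades exactly the obstacle you hit: it is not additive chaining in a fixed metric. Each normalized $f$ is replaced by a layer-cake surrogate $h(f,\cdot)=\sum_j (1+a)^j\chi_{D_j(f)}$ whose level sets $D_j(f)$ are built from $L_\infty$-nets of accuracy \emph{proportional to the height} $(1+a)^j$ (a multiplicative approximation scheme), while values of $|f|$ below $\asymp\va^{1/q}$ are discarded outright since they contribute at most $\va/8$ to $\tfrac1m\sum_j|f(\xi^j)|^q$; hence no net finer than a scale depending only on $\va$ is ever needed. For each level family $\{M_j\chi_{D_j(f)}\}$ with $M_j=(1+a)^{jq}$ one has $\|M_j\chi_{D_j(f)}\|_1\le \|h(f)\|_q^q\le 2$, so Bernstein's inequality (Lemma \ref{lem-2-2}) enters with variance proxy $M_1M_\infty\lesssim M_j$ rather than $M_\infty^2$, and the product $M_j\cdot\log|\cF_j^q|\lesssim B_1^q v\,\va^{-q-1}$ is bounded \emph{uniformly in $j$} --- precisely the per-level cancellation that sup-norm-controlled increments cannot give. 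Supplying this idea (or, alternatively, actually performing the reduction to Theorem \ref{thm-4-2}) is what your proposal is missing; as written it is a plan with a hole at what you correctly identify as the technical heart.
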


Clearly, 
 either of  the following two conditions implies the condition \eqref{2-3} in the above theorem:
\begin{enumerate}[\rm (i)]
	\item  $\xi^1,\cdots,\xi^m$ are  identically distributed according to  $\mu$;
	\item there exists a partition $\{\Ld_1, \cdots, \Ld_m\}$ of $\Og$ such that $\mu(\Ld_j)=\f1m$ and $\xi^j\in\Ld_j$ is  distributed  according to    $m\cdot \mu\Bl|_{\Ld_j}$  for each $1\leq j\leq m$. 
\end{enumerate}

Next, we recall the following simple remark from \cite{DT}.

\begin{Remark}
	We point out that \eqref{Z1} implies   
	\be\label{2.3a} \|f\|_\infty \leq 3 B_1 v^{1/q},\   \  \forall f\in \Sigma^q_v(\D_N).
	\ee
Therefore, assumption (\ref{2-3b}) can be dropped with $B_2$ replaced by $3B_1$ in the bound on $m$.	 However, in  applications, the constant $B_2$ in \eqref{2-3b} may be significantly smaller than  $3B_1$.  For example, if $\D_N$ is a uniformly bounded orthonormal system with $\max_{f\in\D_N}\|f\|_\infty=1$, then we can take $B_2=1$ in the case $q=2$.  
\end{Remark}	
	
%
%
%

Theorem \ref{XT2}    motivates us to estimate the quantities  $\e_k(\Sigma_v^q(\D_N),L_\infty)$. 
 Our goal for the reminder of this section is to  prove that if  the  dictionary $\CD_N$ satisfies both  (\ref{I9}) and (\textnormal{(1.7a)}), and if
 there exist a number $q_{ v}:= q_{N,v}>2$ and a universal constant $C_0>1$ such that 
 \be\label{X5a}
 \|f\|_\infty \le C_0\|f\|_{q_{v}},\   \ \forall f\in \Sigma_{2v}(\CD_N), 
 \ee
 then we have 
\be
\e_k(\Sigma_v^2(\CD_N),L_\infty) \le C{q_v} \Bigl(\frac vk\Bigr)^{1/2},\quad   k=1,2,\dots.\label{1-4b}
\ee

We now recall
some known general results, which will be needed in our later proof.
\begin{Theorem}\label{XT3} \textnormal{  (\cite{VT138}, \cite[p.331, Theorem 7.4.3]{VTbookMA})}. Let  $W$ be  a compact subset of a Banach space $X$ for which  there exist a  system $\D_N\subset X$ with  $|\D_N|=N$, and  a number $r>0$ such that 
$$
  \sigma_m(W,\D_N)_X \le (m+1)^{-r},\quad m=0,1,\cdots, N.
$$
Then we have
\begin{equation}\label{X3}
\e_k(W,X) \le C(r) \left(\frac{\log(2N/k)}{k}\right)^r,\  \ k=1,\cdots, N.
\end{equation}
\end{Theorem}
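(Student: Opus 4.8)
The plan is to deduce the entropy bound from the approximation hypothesis $\sigma_m(W,\D_N)_X \le (m+1)^{-r}$ by a multilevel (chaining) covering argument. A single best $m$-term approximant followed by one covering is provably too wasteful: it yields a net of log-cardinality $\asymp m\bigl(\log(N/m)+r\log m\bigr)$ at accuracy $m^{-r}$, and for $k$ close to $N$ (where one is forced to take $m\asymp k$) the term $r\,m\log m$ overshoots the budget $k$ by a factor $\log N$, producing a spurious logarithm. So I would first record two elementary ingredients: (a) best $m$-term approximants exist, since $\Sigma_m(\D_N)$ is a finite union of closed $m$-dimensional subspaces, so the infimum defining $\sigma_m(f,\D_N)_X$ is attained; and (b) the volumetric estimate that a radius-$R$ ball in any $d$-dimensional normed space admits a $\delta$-net of cardinality at most $(3R/\delta)^d$. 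Note also $\sigma_0(W,\D_N)_X\le 1$, so $W$ lies in the unit ball of $X$.

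Next, fix $k\in\{1,\dots,N\}$, set $L:=\log_2(2N/k)\ge 1$, and introduce dyadic levels $2^j$, $j=0,\dots,J$, with the top level chosen so that $2^J\asymp c_r k/L$ for a small constant $c_r$ to be fixed at the end. For each $f\in W$ pick best $2^j$-term approximants $p_j$ and form the telescoping increments $u_j:=p_j-p_{j-1}$ (with $p_{-1}=0$), so that $f\approx p_J=\sum_{j=0}^{J}u_j$ with tail $\|f-p_J\|\le 2^{-Jr}$. Each $u_j$ is $2^{j+1}$-sparse with respect to $\D_N$ and satisfies $\|u_j\|\le C_r 2^{-jr}$ with $C_r=1+2^r$.

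The core step is to cover, at each level $j$, the set of all admissible increments $u_j$ to accuracy $\eta_j:=2^{j-J}\,2^{-Jr}$. Since $u_j$ ranges over at most $\binom{N}{2^{j+1}}\le (eN/2^{j+1})^{2^{j+1}}$ subspaces, each intersected with a ball of radius $C_r 2^{-jr}$, the volumetric bound gives $\log_2$ of the level-$j$ net at most $2^{j+1}\bigl[\log_2(eN/2^{j+1})+(J-j)(r+1)+O(1)\bigr]$. Taking the product of the level nets yields a net for $W$ of total accuracy $\le 2^{-Jr}+\sum_{j}\eta_j\le 3\cdot 2^{-Jr}$, and of log-cardinality the sum over $j$ of the above. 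The geometric weight $2^{j-J}$ makes $\sum_j\eta_j\le 2\cdot 2^{-Jr}$, and, crucially, both entropy contributions telescope as convergent geometric sums dominated by the top level: $\sum_{j}2^{j+1}\log_2(eN/2^{j+1})\le 2^{J+2}\bigl[\log_2(eN/2^{J+1})+1\bigr]$ and $(r+1)\sum_j 2^{j+1}(J-j)=(r+1)2^{J+2}$, both of order $2^J L$ up to constants depending only on $r$ (using $\log_2(eN/2^{J})\lesssim L$ because $2^J\asymp c_r k/L$ and $L\ge 1$). Thus the total budget is $\asymp_{} 2^J L\asymp c_r k$ up to an $r$-dependent constant; choosing $c_r$ small forces it below $k$, so $\e_k(W,X)\le 3\cdot 2^{-Jr}=C(r)\bigl(\log(2N/k)/k\bigr)^r$.

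I expect the main obstacle to be exactly this budget accounting. The delicate point is that the expensive high-dimensional increments may only be covered to coarse relative accuracy, and one must choose the accuracy allocation $\eta_j=2^{j-J}2^{-Jr}$ and the constant $c_r$ so that the accumulated error and the accumulated entropy both close simultaneously, with the logarithmic factor $\log(2N/k)$ appearing only to the first power rather than squared. The reason the chaining succeeds where a single level fails is that the per-level geometric overhead $(J-j)(r+1)$ is summable after the $2^{-(J-j)}$ weighting, so it contributes only an $O_r(2^J)$ term; getting these weights and the telescoping constants right is the heart of the argument.
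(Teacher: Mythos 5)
This theorem is not proved in the paper: it is quoted as a known result with citations to \cite{VT138} and \cite[Theorem 7.4.3]{VTbookMA}. Your argument---telescoping dyadic best-$m$-term approximants, covering each sparse increment set by volumetric nets over the at most $\binom{N}{2^{j+1}}$ candidate supports, and allocating accuracies $\eta_j=2^{j-J}2^{-Jr}$ so that both the accumulated error and the accumulated entropy are dominated by the top level, which is what removes the spurious logarithm that a single-level net produces---is correct and is essentially the same chaining proof as in those cited references, up to routine details (the trivial regime where $c_r k/\log(2N/k)<1$, and converting the external net into an internal one at the cost of a factor $2$ absorbed into $C(r)$).
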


For a given set $\D_N=\{g_j\}_{j=1}^N$ of $N$ elements in  a Banach space $X$,
 we introduce the  generalized octahedron,
\be\label{X4}
A_1(\D_N) := \left\{f\,:\, f=\sum_{j=1}^N c_jg_j,\quad \sum_{j=1}^N |c_j|\le 1\right\},
\ee
and the norm $\|\cdot\|_A$ on $X_N=\spn\{g_1,\cdots, g_N\}$,
$$
\|f\|_A := \inf\left\{ \sum_{j=1}^N |c_j|\,:\, f=\sum_{j=1}^N c_jg_j,\  \ c_j\in\CC \right\},\  \ f\in X_N.
$$

We now use a known general result for a smooth Banach space. For a Banach space $X$ we define the modulus of smoothness
$$
\rho(X,u) := \sup_{\|x\|=\|y\|=1}\left(\frac{1}{2}(\|x+uy\|+\|x-uy\|)-1\right),\  \ u>0.
$$
The uniformly smooth Banach space is the one with the property
$$
\lim_{u\to 0+}\f{\rho(X, u)}u =0.
$$
A  Banach space $X$ is called $s$-smooth  for some  parameter  $1<s\leq 2$  and constant $\ga>0$ if  $\rho(X, u) \le \ga u^s$ for all $u>0$. The following bound is a corollary of known greedy approximation results (see, for instance \cite{VTbookMA}, p.455).

 \begin{Theorem}\label{XT4}\textnormal{(\cite[p. 455]{VTbookMA}) } Let $X$ be an  $s$-smooth Banach space  for some parameter $1<s\leq 2$ and constant $\ga>0$.
 	Let $\D_N=\{g_j\}_{j=1}^N\subset X$ be a dictionary normalized by $\|g_j\|_X= 1$ for all $1\leq j\leq N$. 
 Then for any $1\leq m\leq N$, 
 $$
 \sigma_m(A_1(\D_N), \D_N)_X \le C(s)\gamma^{1/s}m^{1/s-1}.
 $$
 \end{Theorem}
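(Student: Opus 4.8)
The plan is to prove the bound constructively, by exhibiting for every $f\in A_1(\D_N)$ an $m$-term approximant of the required accuracy; this is the content of the cited greedy approximation results, and the cleanest route is the relaxed (incremental) greedy algorithm. First I would pass to the symmetrized dictionary $\D_N^{\pm}=\{\pm g_j:1\le j\le N\}$ (replaced by $\{e^{i\theta}g_j\}$ in the complex case) and observe that $A_1(\D_N)$ lies in the convex hull of $\D_N^{\pm}\cup\{0\}$: writing $c_j=|c_j|e^{i\theta_j}$ shows $f=\sum_i w_i\psi_i$ with $\psi_i\in\D_N^{\pm}$, $w_i\ge 0$, $\sum_i w_i\le 1$. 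Thus it suffices to approximate such an $f$, and since every approximant produced below is a convex combination of at most $m$ elements of $\D_N^{\pm}$, it is automatically an $m$-term approximant with respect to $\D_N$; the resulting error bound is then an upper bound for $\sigma_m(A_1(\D_N),\D_N)_X$.

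Second, I would record the basic consequence of $s$-smoothness. From $\rho(X,u)\le\ga u^s$ one obtains, for every $x\ne 0$ and every $u\in X$,
\[ \|x+u\|\le \|x\|+F_x(u)+2\ga\,\|u\|^s\,\|x\|^{1-s}, \]
where $F_x$ is a norming functional of $x$ (so $\|F_x\|=1$, $F_x(x)=\|x\|$, and $F_x(u)$ is read as its real part in the complex case). This follows by applying the definition of $\rho$ to the unit vectors $x/\|x\|$ and $u/\|u\|$ with parameter $\|u\|/\|x\|$ to bound $\|x+u\|+\|x-u\|$, and then using $\|x-u\|\ge \|x\|-F_x(u)$.

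Third, I would run the incremental algorithm: put $G_0=0$ and, for $m\ge 1$, $G_m=(1-\tfrac1m)G_{m-1}+\tfrac1m\phi_m$, with $\phi_m\in\D_N^{\pm}$ chosen to (approximately) minimize $\|f-G_m\|$. Writing $r_m=f-G_m$ gives $r_m=(1-\tfrac1m)r_{m-1}+\tfrac1m(f-\phi_m)$. Applying the smoothness inequality with $x=(1-\tfrac1m)r_{m-1}$, $u=\tfrac1m(f-\phi_m)$, and using that the norming functional of $x$ equals that of $r_{m-1}$, yields
\[ \|r_m\|\le (1-\tfrac1m)\|r_{m-1}\|+\tfrac1m F_{r_{m-1}}(f-\phi_m)+2\ga\, m^{-s}\|f-\phi_m\|^s\big((1-\tfrac1m)\|r_{m-1}\|\big)^{1-s}. \]
The linear term is killed by averaging: since $\sum_i w_i(f-\psi_i)=0$, some index satisfies $F_{r_{m-1}}(f-\psi_i)\le 0$, so the optimizing choice of $\phi_m$ makes the middle term nonpositive. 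With $\|f-\phi_m\|\le\|f\|+1\le 2$ this gives, for $a_m:=\|r_m\|$, the recursion $a_m\le(1-\tfrac1m)a_{m-1}+2^{s+1}\ga\, m^{-s}\big((1-\tfrac1m)a_{m-1}\big)^{1-s}$.

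Finally, I would solve this recursion by induction on the ansatz $a_m\le C(s)\ga^{1/s}m^{1/s-1}$: substituting it for $a_{m-1}$, the contraction factor contributes essentially $C(s)\ga^{1/s}m^{1/s-1}(1-\tfrac{1}{sm})$, while the remainder term is of order $\ga^{1/s}m^{1/s-2}$, and the induction closes once $C(s)$ is large enough (e.g. $C(s)\ge(2^{s+1}s)^{1/s}$). The main obstacle is precisely this last step: the recursion carries the awkward negative power $\big((1-\tfrac1m)a_{m-1}\big)^{1-s}$ inherited from the $\|x\|^{1-s}$ factor in the smoothness inequality, so one must pick the correct ansatz, treat the base case and the first few steps separately, and track lower-order corrections carefully enough that the gain $-\tfrac{1}{sm}$ from the contraction dominates the loss from the remainder. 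Everything else is routine; the reduction to the convex hull and the averaging argument are the conceptual core, and $s$-smoothness enters only through the single displayed inequality.
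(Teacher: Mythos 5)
Your overall route is the right one: the paper does not prove Theorem \ref{XT4} at all (it only cites \cite{VTbookMA}), and the argument you sketch --- reduction to the symmetrized convex hull, the smoothness inequality $\|x+u\|\le\|x\|+F_x(u)+2\gamma\|u\|^s\|x\|^{1-s}$, the averaging argument to kill the linear term, and the incremental greedy recursion --- is exactly the standard greedy-approximation proof that the citation points to. The reduction, the smoothness inequality, the choice of $\phi_m$, and the derivation of the recursion are all correct, up to one small repair: your representation only gives $\sum_i w_i\le 1$, while the identity $\sum_i w_i(f-\psi_i)=0$ requires $\sum_i w_i=1$. This is harmless, since $0=\tfrac12 g_1+\tfrac12(-g_1)\in\operatorname{conv}(\D_N^{\pm})$, so the weights can always be padded to sum to exactly $1$.

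The genuine gap is in your last step. Because $1-s<0$, the right-hand side of the recursion $a_m\le(1-\tfrac1m)a_{m-1}+2^{s+1}\gamma m^{-s}\bigl((1-\tfrac1m)a_{m-1}\bigr)^{1-s}$ is \emph{not monotone} in $a_{m-1}$: it tends to $+\infty$ as $a_{m-1}\to 0^+$. So ``substituting the ansatz for $a_{m-1}$'' is not a legitimate move; if at some step the residual happens to be much smaller than $C(s)\gamma^{1/s}(m-1)^{1/s-1}$, the recursion by itself gives no useful bound on $a_m$, and this can happen at \emph{any} step, so your proposed remedies (base case, first few steps, lower-order corrections) do not address it. The needed fix is a two-case argument. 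The function $g(t)=t+2^{s+1}\gamma m^{-s}t^{1-s}$ is increasing precisely on $[t_*,\infty)$, where $t_*=\bigl(2^{s+1}(s-1)\gamma\bigr)^{1/s}m^{-1}$. If $(1-\tfrac1m)a_{m-1}\ge t_*$, then monotonicity of $g$ on this interval makes your substitution valid, and your computation (with $C(s)\ge(2^{s+1}s)^{1/s}$ or so) closes the induction. If instead $(1-\tfrac1m)a_{m-1}<t_*$, discard the smoothness bound and use the plain triangle inequality on $r_m=(1-\tfrac1m)r_{m-1}+\tfrac1m(f-\phi_m)$, which gives $a_m\le t_*+\tfrac2m$; this is already $\le C(s)\gamma^{1/s}m^{1/s-1}$ directly, since $t_*\le(2^{s+1}(s-1))^{1/s}\gamma^{1/s}m^{1/s-1}$ and since any admissible smoothness constant satisfies $\gamma\ge\rho(X,1)\ge\sqrt2-1$ (Nordlander's inequality), which lets the $2/m$ term be absorbed into $\gamma^{1/s}m^{1/s-1}$ as well. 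The same observation disposes of the degenerate case $r_{m-1}=0$, where the smoothness inequality (which needs $x\ne0$) cannot be applied. With this case analysis inserted, your proof is complete and matches the cited result.
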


 We now proceed to a special case of  $X=L_p$, where  it is known that 
 \be\label{X5-0}
 \rho(L_p,u) \le (p-1)u^2/2,\quad 2\le p<\infty,\  \ u>0.
 \ee

\begin{Theorem}\label{XT5}   
	 Let $\CD_N=\{\vi_j\}_{j=1}^N \subset L_\infty(\Og)$ be  a  dictionary  satisfying  the conditions  \eqref{I9},   \textnormal{(1.7a)}
	 and  \eqref{X5a} for some constants $K\ge 1$ and  $q_{ v}:= q_{N,v}>2$. 
 Then   we have
 \be\label{X5}
 \e_k(\Sigma_v^2(\CD_N),L_\infty) \le C   \sqrt{K q_{v} \cdot \log N} \Bigl(\frac vk\Bigr)^{1/2},\quad   k=1,2,\dots.
 \ee
 \end{Theorem}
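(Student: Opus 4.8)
The plan is to transfer the entropy estimate from the non-smooth space $L_\infty$, where the greedy machinery is unavailable, to the uniformly smooth space $L_{q_v}$, where Theorems~\ref{XT4} and~\ref{XT3} apply; hypothesis \eqref{X5a} is exactly the device that makes this transfer legitimate. First I would note that an internal $\varepsilon$-net of $\Sigma_v^2(\CD_N)$ in $L_{q_v}$ consists of elements of $\Sigma_v(\CD_N)$, so the difference of any $f\in\Sigma_v^2(\CD_N)$ and its nearest net point lies in $\Sigma_{2v}(\CD_N)$; then \eqref{X5a} turns this into a $C_0\varepsilon$-net in $L_\infty$, giving $\e_k(\Sigma_v^2(\CD_N),L_\infty)\le C_0\,\e_k(\Sigma_v^2(\CD_N),L_{q_v})$. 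Second, for $f=\sum_{j\in J}a_j\vi_j$ with $|J|=v$ and $\|f\|_2\le1$, condition \textnormal{(1.7a)} gives $\sum_{j\in J}|a_j|^2\le K$, whence $\sum_{j\in J}|a_j|\le\sqrt{Kv}$ by Cauchy--Schwarz; thus $\Sigma_v^2(\CD_N)\subset\sqrt{Kv}\,A_1(\CD_N)$, and (up to the standard factor relating internal and external covering numbers) $\e_k(\Sigma_v^2(\CD_N),L_{q_v})\le C\sqrt{Kv}\,\e_k(A_1(\CD_N),L_{q_v})$.

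The core of the argument is then the bound for $\e_k(A_1(\CD_N),L_{q_v})$. Putting $\psi_j:=\vi_j/\|\vi_j\|_{q_v}$ and using $\|\vi_j\|_{q_v}\le\|\vi_j\|_\infty\le1$ from \eqref{I9}, one has $A_1(\CD_N)\subset A_1(\{\psi_j\}_{j=1}^N)$ for the normalized dictionary. By \eqref{X5-0}, $L_{q_v}$ is $2$-smooth with constant $\gamma=(q_v-1)/2$, so Theorem~\ref{XT4} with $s=2$ yields the best $m$-term rate $\sigma_m(A_1(\{\psi_j\}),\{\psi_j\})_{L_{q_v}}\le C\sqrt{q_v}\,(m+1)^{-1/2}$. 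Rescaling by the factor $C\sqrt{q_v}$ and feeding the rate $r=\tfrac12$ into Theorem~\ref{XT3} gives
\[
\e_k\bigl(A_1(\{\psi_j\}),L_{q_v}\bigr)\le C\sqrt{q_v}\,\Bl(\frac{\log(2N/k)}{k}\Br)^{1/2},\qquad 1\le k\le N.
\]
Combining the three estimates and using the crude bound $\log(2N/k)\le2\log N$ for $1\le k\le N$ produces \eqref{X5} in the range $1\le k\le N$.

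To reach all $k=1,2,\dots$ I would dispose of $k>N$ by elementary means, following the techniques of \cite{DT}. For $N\le k\le N\log N$ monotonicity of entropy numbers together with the $\sqrt{\log N}$ slack suffices: at $k=N$ the previous step gives $\e_N(\Sigma_v^2(\CD_N),L_\infty)\le C\sqrt{Kq_v}\,(v/N)^{1/2}$, and this is already below the target value $C\sqrt{Kq_v\log N}\,(v/k)^{1/2}$ throughout that interval. For $k>N\log N$ I would use the volumetric count $N_\varepsilon(\Sigma_v^2(\CD_N),L_\infty)\le\binom{N}{v}(5\sqrt{Kv}/\varepsilon)^v$, valid because $\Sigma_v^2(\CD_N)$ is a union of $\binom{N}{v}$ balls of $L_\infty$-radius at most $\sqrt{Kv}$ (indeed $\|f\|_\infty\le\sqrt{Kv}\|f\|_2$ for $f\in\Sigma_v(\CD_N)$, again by \eqref{I9} and \textnormal{(1.7a)}); this yields decay exponential in $k/v$, comfortably below the target once $k$ exceeds $v\log_2(5eN/v)\le CN$.

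The main obstacle is the smoothness step. One cannot run the greedy/entropy machinery directly in $L_\infty$, since that space is not uniformly smooth; the assumption \eqref{X5a} is introduced precisely to replace the uniform norm by $L_{q_v}$, and the delicate point is to carry out this replacement while keeping track of the sharp constant $\sqrt{q_v}$ arising from the modulus of smoothness $\rho(L_{q_v},u)\le(q_v-1)u^2/2$. By contrast, the factor $\sqrt{K}$ enters only through the elementary inclusion $\Sigma_v^2(\CD_N)\subset\sqrt{Kv}\,A_1(\CD_N)$ and the factor $\sqrt{\log N}$ only through the estimate $\log(2N/k)\le2\log N$, both of which are routine once the entropy bound for $A_1(\CD_N)$ in $L_{q_v}$ is established.
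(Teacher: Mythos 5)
Your proposal is correct, and its core coincides with the paper's own proof: the same chain of reductions — the transfer $\e_k(\Sigma_v^2(\CD_N),L_\infty)\le C_0\,\e_k(\Sigma_v^2(\CD_N),L_{q_v})$ via \eqref{X5a}, the inclusion $\Sigma_v^2(\CD_N)\subset (Kv)^{1/2}A_1(\CD_N)$ from Cauchy--Schwarz and (1.7a), Theorem \ref{XT4} with $s=2$ together with \eqref{X5-0}, Theorem \ref{XT3} with $r=1/2$, and finally $\log(2N/k)\le 2\log N$. Your reordering (bounding $\e_k(A_1(\{\psi_j\}),L_{q_v})$ first and then transferring to $\Sigma_v^2(\CD_N)$ by scaling, rather than bounding $\sigma_m(\Sigma_v^2(\CD_N),\CD_N)_{L_p}$ and feeding $\Sigma_v^2(\CD_N)$ itself into Theorem \ref{XT3}, as the paper does) is immaterial, and your explicit justification of the $L_\infty$-to-$L_{q_v}$ transfer via differences lying in $\Sigma_{2v}(\CD_N)$ is exactly the reason \eqref{X5a} is stated for $\Sigma_{2v}$. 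The only genuine divergence is the tail $k>N$: the paper invokes the product inequality $\e_k(W,L_\infty)\le \e_N(W,L_\infty)\,\e_{k-N}(X_N^\infty,L_\infty)$ together with $\e_n(X_N^\infty,L_\infty)\le 3\cdot 2^{-n/N}$, whereas you use monotonicity of entropy numbers on $N\le k\le N\log N$ and a direct volumetric count over the $\binom{N}{v}$ subspaces beyond that; both deliver the needed exponential decay in $k$. One small inaccuracy in your version: the stated threshold ``once $k$ exceeds $v\log_2(5eN/v)$'' is too optimistic, since comparing $\sqrt{Kv}\,2^{-(k-\log_2\binom{N}{v})/v}$ with $\sqrt{Kq_v\log N}\,(v/k)^{1/2}$ also produces a term of order $\tfrac v2\log_2 k$ (so for $v$ close to $N$ the true threshold is of order $N\log N$, not $CN$); this is harmless because you only invoke the volumetric bound for $k>N\log N$, where the inequality does hold after adjusting the absolute constant $C$ in \eqref{X5}, but the threshold as written should be corrected.
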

 \begin{proof} First of all, for any $f=\sum_{j\in G }a_j\ff_j$ with  $|G|=v$, we have 
  \be\label{X6}
 \|f\|_A \le \sum_{j\in G} |a_j| \le  v^{1/2} \left( \sum_{j\in G} |a_j|^2\right)^{1/2} \le (Kv)^{1/2}\|f\|_2,
 \ee
 implying 
 $$
 \Sigma_v^2(\CD_N) \subset (Kv)^{1/2}\Sigma_v^A(\CD_N),$$
 where $$ R\Sigma_v^A(\CD_N):=\{f\in\Sigma_v(\CD_N)\,:\, \|f\|_A \le R\}.
 $$
 By Theorem \ref{XT4} with $s=2$ and by (\ref{X5-0}), we have that 
 for $p\in[2,\infty)$
 \be\label{X7-0}
 \sigma_m(\Sigma_v^2(\CD_N),\CD_N)_{L_p} \le C  (K v)^{1/2} \sqrt p m^{-\frac12},\quad   m=1,2,\cdots,N.
 \ee
 Thus,   Theorem \ref{XT3} implies that  for $p\in[2,\infty)$
 \be\label{X7}
  \e_k(\Sigma_v^2(\CD_N),L_p) \le C  (K p \log(2N/k))^{1/2}(v/k)^{1/2},\quad   k=1,2,\dots,N.
 \ee
 Second, by (\ref{X5a})  we obtain  
 \be\label{X8}
   \e_k(\Sigma_v^2(\CD_N),L_\infty)\le C_0\e_k(\Sigma_v^2(\CD_N),L_{q_{v} }). 
 \ee 
 Combining (\ref{X7}) and (\ref{X8}) we get   
 \be\label{X9}
  \e_k(\Sigma_v^2(\CD_N),L_\infty) \le C   \sqrt{Kq_v \cdot \log N}  (v/k)^{1/2},\quad   k=1,2,\dots,N.
 \ee

 Finally, for $k>N$ we use the inequalities (see \cite[p. 323, (7.1.6)]{VTbookMA} and \cite[p. 324, Cor 7.2.2]{VTbookMA})
 $$
 \e_k(W,L_\infty) \le \e_N(W,L_\infty)\e_{k-N}(X_N^\infty,L_\infty)
 $$
 and 
 $$
 \e_{n}(X_N^\infty,L_\infty) \le 3\cdot 2^{-n/N},\qquad 2^{-x} \le 1/x,\quad x\ge 1,
 $$
 to obtain (\ref{X9}) for all $k$.
 This completes the proof. 
 
 \end{proof}
 
In the same way as in Section 3 of \cite{DT} we derive from Theorem \ref{XT5}, which holds for $p=2$, the following Theorem \ref{YT1}, which holds for $1\le p\le 2$. We do not present this proof here.

\begin{Theorem}\label{YT1}   
	 Assume that  $\CD_N=\{\vi_j\}_{j=1}^N \subset L_\infty(\Og)$ is   a  dictionary  satisfying  the conditions  \eqref{I9},   \textnormal{(1.7a)}
	and  \eqref{X5a} for some constants $K\ge 1$ and  $q_{ v}:= q_{N,v}>2$.  Then  for $1\leq p\leq 2$,  we have
 \be\label{X5}
 \e_k(\Sigma_v^p(\CD_N),L_\infty) \le C_p  \Bl( \f{K q_v \cdot \log N\cdot v}k \Br)^{1/p},\quad   k=1,2,\dots.
 \ee
 \end{Theorem}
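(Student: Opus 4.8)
The plan is to reduce everything to the already-established case $p=2$ (Theorem \ref{XT5}) together with the finite-dimensional geometry of the sparse slices, following the scheme of Section~3 of \cite{DT}. The first move is to pass from $L_\infty$ to the smoother space $L_{q_v}$. Since differences of elements of $\Sigma_v(\CD_N)$ lie in $\Sigma_{2v}(\CD_N)$, the hypothesis \eqref{X5a} turns any $L_{q_v}$-net of $\Sigma_v^p(\CD_N)$ into an $L_\infty$-net at the cost of the universal factor $C_0$, so that
\[
\e_k(\Sigma_v^p(\CD_N),L_\infty)\le C_0\,\e_k(\Sigma_v^p(\CD_N),L_{q_v}),\qquad k=1,2,\dots.
\]
Thus it suffices to control the entropy numbers of the $L_p$-ball of $v$-sparse functions measured in $L_{q_v}$, where $2<q_v<\infty$ and so $L_{q_v}$ is $2$-smooth with constant of order $q_v$ by \eqref{X5-0}.

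Next I would record the octahedral embedding. For a $v$-sparse $f=\sum_{j\in G}a_j\varphi_j$ with $|G|=v$, combining the Cauchy--Schwarz bound $\sum_{j\in G}|a_j|\le v^{1/2}(\sum_{j\in G}|a_j|^2)^{1/2}$ with the lower Riesz bound \textnormal{(1.7a)} gives $\|f\|_A\le (Kv)^{1/2}\|f\|_2$, and, using \eqref{I9}, also $\|f\|_\infty\le(Kv)^{1/2}\|f\|_2$. Feeding the latter into the layer-cake inequality $\|f\|_2^2\le\|f\|_\infty^{2-p}\|f\|_p^p$ yields the reverse-H\"older estimate $\|f\|_2\le (Kv)^{(2-p)/(2p)}\|f\|_p$, whence
\[
\|f\|_A\le (Kv)^{1/p}\|f\|_p,\qquad\text{so that}\qquad \Sigma_v^p(\CD_N)\subset (Kv)^{1/p}A_1(\CD_N).
\]
Applying Theorem \ref{XT4} with $s=2$ in $X=L_{q_v}$ and then Theorem \ref{XT3} with $r=\tfrac12$ produces
\[
\e_k(\Sigma_v^p(\CD_N),L_{q_v})\le C(Kv)^{1/p}\,\sqrt{q_v}\,\Bl(\tfrac{\log(2N/k)}{k}\Br)^{1/2},\qquad 1\le k\le N,
\]
which, combined with the reduction above, already proves the asserted bound \emph{in the regime} $k\lesssim q_v\log N$, where the exponent $\tfrac12$ is at least as favorable as the target exponent $\tfrac1p$.

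The crux is to sharpen the $k$-decay from $\tfrac12$ to $\tfrac1p$ on the remaining range $q_v\log N\lesssim k\le N$. The greedy machinery cannot achieve this by itself: in a $2$-smooth space Theorem \ref{XT4} caps the $m$-term rate at $m^{-1/2}$, which through Theorem \ref{XT3} caps the entropy decay at $k^{-1/2}$. The extra decay must instead come from the finite-dimensional $\ell_p$-geometry of the slices. Concretely I would write $\Sigma_v^p(\CD_N)=\bigcup_{|G|=v}\{f\in\spn\{\varphi_j:j\in G\}:\|f\|_p\le1\}$; the union over the $\binom{N}{v}\le (eN/v)^v$ supports contributes an entropy budget of order $v\log N$, accounting for the $\log N$ factor, while on each $v$-dimensional slice the estimate \eqref{X5a} forces the sparse functions to be spread out (their $L_\infty$ and $L_{q_v}$ norms being comparable up to $C_0$), so that the slice behaves like a finite-dimensional $L_p$-ball, whose embedding into $L_\infty$ carries the exponent $\tfrac1p$ (Sch\"utt-type estimates). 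Equivalently, one transfers the $p=2$ bound of Theorem \ref{XT5} to general $p$ by an entropy-interpolation argument built on the three-norm inequality $\|f\|_2\le\|f\|_\infty^{1-p/2}\|f\|_p^{p/2}$: the constraint $\|f\|_p\le1$ confines $\Sigma_v^p(\CD_N)$ to the thin part of the scaled ball $(Kv)^{(2-p)/(2p)}\Sigma_v^2(\CD_N)$, on which a coarser net attains the required uniform accuracy. This is the step I expect to be the main obstacle and the one for which the constructions of Section~3 of \cite{DT} are needed.

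Finally, for $k>N$ I would extend the bound exactly as in the proof of Theorem \ref{XT5}, using the submultiplicativity $\e_k(W,L_\infty)\le \e_N(W,L_\infty)\,\e_{k-N}(X_N^\infty,L_\infty)$ together with $\e_n(X_N^\infty,L_\infty)\le 3\cdot2^{-n/N}$ and $2^{-x}\le 1/x$ for $x\ge1$; since the exponential factor dominates any power of $k$, the polynomial form $\bl(Kq_v\log N\cdot v/k\br)^{1/p}$ survives for all $k$. Tracking the constants through the three ranges, and recalling that the passage from $p=2$ to $1\le p\le 2$ is exactly the content of the interpolation step above, then yields the estimate of Theorem \ref{YT1} with a constant $C_p$ depending only on $p$.
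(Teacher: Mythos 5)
Your scaffolding is sound and, as far as it goes, matches what the paper has in mind: the passage from $L_\infty$ to $L_{q_v}$ via \eqref{X5a} (using that differences of $v$-sparse functions are $2v$-sparse), the embedding $\|f\|_A\le (Kv)^{1/p}\|f\|_p$, and the chain Theorem \ref{XT4} (with $s=2$ in $L_{q_v}$) followed by Theorem \ref{XT3}, giving
$\e_k(\Sigma_v^p(\CD_N),L_\infty)\le C\,C_0 (Kv)^{1/p}\sqrt{q_v}\,\bigl(\log(2N/k)/k\bigr)^{1/2}$ for $k\le N$. You are also right that this implies \eqref{X5} precisely in the regime $k\le q_v\log N$, and that the extension to $k>N$ is routine. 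Note, however, that the paper itself does not write out the proof of Theorem \ref{YT1}: it defers exactly the step you could not carry out to Section 3 of \cite{DT}, so your appeal to ``the constructions of Section 3 of \cite{DT}'' is an appeal to the very argument you were asked to supply.

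That step --- upgrading the decay from $k^{-1/2}$ to $k^{-1/p}$ in the range $q_v\log N\le k\le Cv\log N$ --- is the heart of the theorem, and neither of your two sketches can close it. First, the slice-by-slice scheme (choose the support, then cover each $v$-dimensional slice) is structurally incompatible with the claimed bound in this range: any cover organized per support uses at least $\binom{N}{v}$ centers, i.e. entropy at least $v\log_2(N/v)$, at every accuracy below the slice diameter, whereas \eqref{X5} asserts for instance that $2^{v}$ balls of radius $C_p(Kq_v\log N)^{1/p}$ (far below the diameter, which is of order $(Kv)^{1/p}$) already cover $\Sigma_v^p(\CD_N)$. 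So in the middle range a valid cover must share centers across supports; correspondingly, the $\log N$ in \eqref{X5} is not a support-choice budget but comes from the $\log(2N/k)$ of Theorem \ref{XT3}. Moreover, a ``Sch\"utt-type'' estimate for a slice would require a two-sided comparison between $\|f\|_p$, $\|f\|_\infty$ and coefficient norms, which \eqref{I9} together with (1.7a) does not provide (one has only a lower $\ell_2$ and an upper $\ell_1$ coefficient bound). Second, the ``thin part of the scaled ball'' idea reduces, quantitatively, to the inclusion $\Sigma_v^p(\CD_N)\subset (Kv)^{1/p-1/2}\Sigma_v^2(\CD_N)$; combined with Theorem \ref{XT5} it yields $\e_k\le C K^{1/p}v^{1/p}(q_v\log N/k)^{1/2}$, which exceeds the target $K^{1/p}v^{1/p}(q_v\log N/k)^{1/p}$ exactly when $k\ge q_v\log N$ --- that is, it reproduces only the easy regime, and you give no mechanism that actually exploits the ``thinness'' (layering $\Sigma_v^p$ by the size of $\|f\|_2$ or $\|f\|_\infty$ does not help: the top layer alone already costs as much as the whole scaled ball).

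It is also worth recording why no improvement of the $m$-term estimate can rescue the route through Theorem \ref{XT3}: the rate $\sigma_m(\Sigma_v^p(\CD_N),\CD_N)_{L_q}\le A(m+1)^{-1/p}$ with $A$ of order $(Kq_v v)^{1/p}$ is simply false for $p<2$. For the trigonometric system, a Fej\'er-type kernel $f$ is $v$-sparse with $\|f\|_1=1$, yet $\sigma_{m}(f,\CD_N)_{L_2}\ge c\sqrt v$ for $m\le v/2$, which is incompatible with a bound of order $q_v v/m$ when $q_v \ll \sqrt{v}$ (the situation in the paper's application, where $q_v$ is logarithmic). Hence the exponent $1/p$ must be produced by a genuine covering construction rather than by greedy approximation plus quantization, and that construction --- the content of Section 3 of \cite{DT} --- is missing from your proposal.
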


\section{Proof of Theorem \ref{IT1}}\label{Z}

 Theorem \ref{YT1}  provides bounds on the entropy numbers $ \e_k(\Sigma_v^p(\CD_N),L_\infty)$ under the additional assumption \eqref{X5a}
$$
\|f\|_\infty \le C_0\|f\|_{q_{v}},\   \ \forall f\in \Sigma_{2v}(\CD_N).
$$
  Thus, a combination of Theorem \ref{YT1} with Theorem \ref{XT2} implies  the conclusion of Theorem \ref{IT1} 
   for
   $$
   m \ge  C(p)     ( Kq_v\cdot \log N)  v (\log (2Kv))^2
   $$
   under the additional condition \eqref{X5a}, which is  not assumed in Theorem ~\ref{IT1}.  In this section, we will show how to drop  this additional assumption.

Recall that  $\CD_N:=\{\ff_j\}_{j=1}^N\subset L_\infty(\Og)$  is  a dictionary  satisfying the conditions \eqref{I9} and  \eqref{Riesz}  for some constant   $K\ge 1$.
For a set $J\subset [N]:= \{1, 2,\cdots, N\}$, we define
\[ V_J:=\text{span}\ \Bl\{ \varphi_j:\  \  j\in J\Br\}\   \ \text{and}\  \  V_J^p:=\{f\in V_J:\  \ \|f\|_p\leq 1\}.\]
Then 
$\Sigma_v(\CD_N):=\underset{\sub{J\subset [N]\\
		|J|=v}}{\bigcup} V_J$ for $1\leq v\leq N$. 
Furthermore, by  \eqref{I9} and \textnormal{(1.7a)}, we have 
\[\|f\|_\infty \leq  (K v)^{\frac12} \|f\|_2,\   \ \forall f\in \Sigma_v(\CD_N),\]
which in turn implies that 
\begin{equation}\label{1-2:eq}\|f\|_\infty \leq  (Kv) ^{\frac1p} \|f\|_p,\   \ \forall f\in \Sigma_v(\CD_N),\   \ 1\leq p\leq 2.\end{equation}

We need several lemmas. The  first three lemmas can be found in \cite{BLM}.

\begin{Lemma}\label{lem-1-3} 	\textnormal{\cite[Lemma 2.4]{BLM}}     Let  $X$ denote  the space $\RR^N$ endowed with  a  norm $\|\cdot\|_X$, and let $B_X:=\{x\in X:\  \|x\|_X \leq 1\}$. 
	Then
 for any $0<\va\leq 1$,
		\begin{equation}\label{3-2-b}
		N\log \f 1\va \leq \mathcal{H}_\va (B_X, X) \leq N\log (1+\f 2\va).
		\end{equation}
	
\end{Lemma}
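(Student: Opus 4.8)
The plan is to prove the two-sided bound by the standard volumetric (packing-and-covering) argument, exploiting that any norm on $\RR^N$ makes $B_X$ a bounded convex body with nonempty interior, so its Lebesgue volume is finite and strictly positive, with the scaling law $\vol(rB_X)=r^N\vol(B_X)$ for every $r>0$. Concretely, I would first establish the base-free covering-number estimates $(1/\va)^N\le N_\va(B_X,X)\le (1+2/\va)^N$ and then simply take logarithms, so that the statement follows for the definition $\cH_\va(B_X,X)=\log N_\va(B_X,X)$ regardless of the logarithm base.

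For the lower bound I would start from an arbitrary $\va$-net $g_1,\dots,g_n\in B_X$, so that $B_X\subseteq\bigcup_{i=1}^n\bigl(g_i+\va B_X\bigr)$. Comparing volumes and using translation invariance of Lebesgue measure gives $\vol(B_X)\le n\,\va^N\vol(B_X)$, and dividing by $\vol(B_X)\in(0,\infty)$ yields $n\ge\va^{-N}$. Taking the logarithm gives $\cH_\va(B_X,X)\ge N\log(1/\va)$.

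For the upper bound I would take a maximal $\va$-separated subset $\{x_1,\dots,x_n\}$ of $B_X$, i.e.\ points pairwise at $\|\cdot\|_X$-distance strictly greater than $\va$. Maximality forces this set to be an $\va$-net of $B_X$: if some point of $B_X$ were at distance $>\va$ from every $x_i$, it could be adjoined, contradicting maximality. Hence $N_\va(B_X,X)\le n$, and crucially the net centers lie in $B_X$, exactly as the covering-number definition requires. The balls $x_i+(\va/2)B_X$ are pairwise disjoint, since their centers are more than $\va$ apart, and all of them lie in $(1+\va/2)B_X$; the resulting volume inequality $n(\va/2)^N\le(1+\va/2)^N$ gives $n\le(1+2/\va)^N$, whence $\cH_\va(B_X,X)\le N\log(1+2/\va)$.

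I do not anticipate a genuine obstacle, as the argument is purely finite-dimensional and uses only the existence and scaling behaviour of Lebesgue volume together with the equivalence of all norms on $\RR^N$, which guarantees $0<\vol(B_X)<\infty$. The only points deserving care are the maximality step that upgrades a separated set into a net while keeping the net centers inside $B_X$, and the consistency of the logarithm base with the paper's definition $\cH_\va=\log N_\va$; since the volumetric estimates are stated for $N_\va$ itself, the final bounds are insensitive to that choice.
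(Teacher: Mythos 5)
Your proof is correct: the volumetric packing-and-covering argument, including the step upgrading a maximal $\va$-separated subset of $B_X$ to an $\va$-net with centers in $B_X$, is sound, and proving the bounds for $N_\va$ first neatly sidesteps the logarithm-base question. The paper itself gives no proof of this lemma, citing it as Lemma 2.4 of \cite{BLM}, and your argument is precisely the standard volume-comparison proof used there, so there is nothing to compare beyond noting agreement.
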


\begin{Lemma}\label{lem-2-2}
	{ \textnormal{\cite[Lemma 2.1]{BLM}} }
	Let $\{g_j\}_{j=1}^m$ be independent random variables with mean $0$ on some probability space $(\Og, \mu)$, which satisfy$$\max_{1\leq j\leq m} \|g_j\|_{L_1(d\mu)}\leq M_1, \  \  \max_{1\leq j\leq m}  \|g_j\|_{L_\infty(d\mu)} \leq M_\infty$$
	for some constants $M_1$ and $M_\infty$. Then for any $0<\va<1$ we have the inequality
	\begin{equation*}
\mu\Bl\{ \og\in\Og:\  \ 	\Bl|\f 1m \sum_{j=1}^m g_j(\og)\Br| \ge \va\Br\} \leq 2 e^{-\f{m\va^2}{ 4M_1M_\infty}}.
	\end{equation*}

\end{Lemma}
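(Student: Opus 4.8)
The plan is to prove this Bernstein-type estimate by the standard exponential-moment (Chernoff) method, applied to both tails. First I would fix $\lambda>0$ and apply Markov's inequality to the exponentiated sum, using independence of the $g_j$ to factor the moment generating function:
$$
\mu\Bl\{\og\in\Og:\ \f1m\sum_{j=1}^m g_j(\og)\ge \va\Br\}\le e^{-\lambda m\va}\,\bE\Bl[e^{\lambda\sum_{j=1}^m g_j}\Br]=e^{-\lambda m\va}\prod_{j=1}^m\bE\bl[e^{\lambda g_j}\br].
$$

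The heart of the argument is to estimate each factor $\bE[e^{\lambda g_j}]$ from the two hypotheses. Since $\bE g_j=0$, expanding the exponential kills the linear term, leaving $\bE[e^{\lambda g_j}]=1+\sum_{k\ge 2}\f{\lambda^k}{k!}\bE[g_j^k]$. The key interpolation step is to combine the $L_1$ and $L_\infty$ controls into the moment bound $\bE|g_j|^k=\bE\bl[|g_j|\cdot|g_j|^{k-1}\br]\le M_\infty^{k-1}\bE|g_j|\le M_1M_\infty^{k-1}$ for all $k\ge 2$, where $|g_j|^{k-1}\le M_\infty^{k-1}$ uses $\|g_j\|_\infty\le M_\infty$ and $\bE|g_j|\le M_1$ uses $\|g_j\|_1\le M_1$. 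Summing the resulting geometric-type series gives
$$
\bE\bl[e^{\lambda g_j}\br]\le 1+\f{M_1}{M_\infty}\bl(e^{\lambda M_\infty}-1-\lambda M_\infty\br)\le\exp\Bl(\f{M_1}{M_\infty}\bl(e^{\lambda M_\infty}-1-\lambda M_\infty\br)\Br),
$$
using $1+u\le e^u$. Restricting $\lambda$ to the range $\lambda M_\infty\le 1$, on which the elementary estimate $e^u-1-u\le u^2$ holds, each factor is bounded by $\exp(M_1M_\infty\lambda^2)$.

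Assembling the pieces yields $\mu\{\f1m\sum g_j\ge\va\}\le\exp\bl(-\lambda m\va+mM_1M_\infty\lambda^2\br)$, and choosing the minimizer $\lambda=\va/(2M_1M_\infty)$ produces the exponent $-m\va^2/(4M_1M_\infty)$, i.e.\ the one-sided bound $e^{-m\va^2/(4M_1M_\infty)}$. Applying the identical argument to the variables $-g_j$ (again mean zero with the same $L_1$ and $L_\infty$ bounds) controls the lower tail, and a union bound over the two tails yields the factor $2$ in the statement.

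The step I expect to be the main obstacle is the per-variable exponential moment estimate: producing the interpolated moment inequality $\bE|g_j|^k\le M_1M_\infty^{k-1}$ and then tracking constants through the Taylor summation so that the optimized exponent comes out as exactly $-m\va^2/(4M_1M_\infty)$. A related delicate point is that the optimizing $\lambda=\va/(2M_1M_\infty)$ must stay inside the admissible range $\lambda M_\infty\le 1$ on which $e^u-1-u\le u^2$ was invoked; keeping track of this constraint (which confines $\va$ to the regime comparable to $M_1$ relevant in the discretization application) is where the careful bookkeeping lies. Once the exponential moment bound is established, the Chernoff optimization and the two-sided union bound are entirely routine.
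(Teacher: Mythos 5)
The paper offers no proof of this lemma at all: it is quoted from \cite{BLM} (Lemma 2.1 there), so your argument can only be measured against the standard Bernstein--Chernoff proof, which is exactly what you reproduce. As far as it goes, your proof is correct: the interpolation $\bE|g_j|^k\le M_\infty^{k-1}\bE|g_j|\le M_1M_\infty^{k-1}$, the resulting bound $\bE[e^{\lambda g_j}]\le\exp\bigl(M_1M_\infty\lambda^2\bigr)$ valid for $\lambda M_\infty\le 1$ (via $e^u-1-u\le u^2$ on $[0,1]$), the choice $\lambda=\va/(2M_1M_\infty)$ giving the exponent $-m\va^2/(4M_1M_\infty)$, and the union bound over the two tails are all sound.

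The genuine gap is the point you set aside as ``careful bookkeeping.'' The admissibility constraint $\lambda M_\infty\le 1$ for your optimizing $\lambda$ is equivalent to $\va\le 2M_1$, and this restriction cannot be removed by any amount of bookkeeping, because the lemma \emph{as stated} --- for every $0<\va<1$ with no relation imposed between $\va$ and $M_1$ --- is false. Concretely, take $m=1$ and let $g_1$ take the values $\pm 1$ with probability $1/200$ each and $0$ otherwise. Then $g_1$ has mean zero, $M_1=10^{-2}$, $M_\infty=1$, and for $\va=1/2$ the left-hand side equals $10^{-2}$ while the right-hand side is $2e^{-25/4}<4\cdot 10^{-3}$. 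This is the usual two-regime phenomenon in Bernstein-type inequalities: with variance proxy $M_1M_\infty$, a Gaussian-type tail can only hold for deviations $\va\lesssim M_1$; beyond that one only gets bounds of the form $\exp(-cm\va/M_\infty)$, which are weaker than the claimed one precisely when $\va>2M_1$. So your proof establishes the lemma exactly on the range $\va\le 2M_1$, and no proof can do better; the defect is in the statement as quoted (in the original formulation of \cite{BLM} the deviation is measured in units of $M_1$, which builds the restriction in automatically).

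For the paper this is harmless, and your parenthetical instinct was right: in the only place the lemma is used --- the proof of Lemma \ref{lem-3-4}, inequality \eqref{7-4-0} --- it is applied with deviation $\va\|f\|_p^p$ and $M_1=2\|f\|_p^p$, i.e.\ deviation $=(\va/2)M_1\le M_1$, squarely inside the regime where your argument is valid, and the stated exponent $-m\va^2/(16Kv)$ comes out correctly. But as a proof of the lemma in the generality in which it is stated, your write-up should either add the hypothesis $\va\le 2M_1$ explicitly or note that the statement requires it; presenting the constraint as routine bookkeeping hides the fact that the claim is simply untrue without it.
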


\begin{Lemma}\label{lem-1-2}
	{ \textnormal{\cite[Lemma 2.5]{BLM}} }
	Let $T: X\to Y$ be a bounded linear map from a normed linear space $(X, \|\cdot\|_X)$ into another normed linear  space $(Y,\|\cdot\|_Y)$.   Let  $\mathcal{F}$ be an $\va$-net of the unit ball $B_X:=\{x\in X:\  \|x\|_X\leq 1\}$ for some constant $\va\in (0, 1)$. Assume that there exist constants $C_1, C_2>0$ such that
	$$ C_1 \|x\|_X \leq \|T x\|_Y\leq C_2 \|x\|_X,\   \   \  \forall x\in \mathcal{F}.$$
	Then
	$$ C_1(\va)  \|z\|_X\leq \|Tz\|_Y\leq C_2(\va) \|z\|_X,\   \ \forall z\in X,$$
	where
	\begin{align*}
	C_1(\va):= &C_1(1-\va)-   C_2\va \f {1+\va} {1-\va},\  \
	C_2(\va):=  {C_2 }\f {1+\va}{1-\va}.
	\end{align*}
\end{Lemma}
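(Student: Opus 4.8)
The plan is to run the standard perturbation (``net'') argument, establishing the upper constant first via the operator norm and then feeding that bound into the lower estimate. Since both of the desired inequalities $C_1(\va)\|z\|_X \le \|Tz\|_Y \le C_2(\va)\|z\|_X$ are homogeneous of degree one in $z$ and hold trivially at $z=0$, I would reduce matters to unit vectors. Two facts about the net element $x\in\mathcal F$ chosen to approximate a point $z$ with $\|z\|_X=1$ will be used repeatedly: by the triangle inequality $\|x\|_X\le \|z\|_X+\|z-x\|_X\le 1+\va$, and by the reverse triangle inequality $\|x\|_X\ge\|z\|_X-\|z-x\|_X\ge 1-\va$. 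Both hold regardless of whether the net is assumed to lie inside $B_X$.

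For the upper bound I would set $M:=\sup_{\|w\|_X\le 1}\|Tw\|_Y$, which is finite precisely because $T$ is assumed bounded. Given any $z$ with $\|z\|_X\le 1$, choose $x\in\mathcal F$ with $\|z-x\|_X\le\va$; the hypothesis together with $\|x\|_X\le 1+\va$ gives $\|Tx\|_Y\le C_2\|x\|_X\le C_2(1+\va)$, while $\|T(z-x)\|_Y\le M\va$. Hence $\|Tz\|_Y\le C_2(1+\va)+M\va$, and taking the supremum over such $z$ yields $M\le C_2(1+\va)+M\va$. Because $M<\infty$ this rearranges to $M\le C_2\f{1+\va}{1-\va}=C_2(\va)$, which is exactly the claimed upper estimate for every $z$.

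For the lower bound I would fix $z$ with $\|z\|_X=1$ and again pick $x\in\mathcal F$ with $\|z-x\|_X\le\va$, so $\|x\|_X\ge 1-\va$. The reverse triangle inequality gives $\|Tz\|_Y\ge\|Tx\|_Y-\|T(z-x)\|_Y\ge C_1\|x\|_X-M\va$, and inserting $\|x\|_X\ge 1-\va$ together with the already-proved bound $M\le C_2(\va)$ produces $\|Tz\|_Y\ge C_1(1-\va)-C_2\va\f{1+\va}{1-\va}=C_1(\va)$. Rescaling by $\|z\|_X$ then recovers the inequality for arbitrary $z$.

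The argument is routine; there is no genuine obstacle, only two points demanding care. First, the boundedness of $T$ is essential: it guarantees $M<\infty$ so that the rearrangement $M(1-\va)\le C_2(1+\va)$ is legitimate rather than a vacuous $\infty\le\infty$. Second, the norm of the net point $x$ covering a unit vector is not $1$ but lies in $[1-\va,\,1+\va]$, and this is precisely what produces the factor $(1+\va)$ in $C_2(\va)$; it also dictates the order of the proof, since the error term $\|T(z-x)\|_Y$ appearing in the lower estimate can only be controlled by $M\va\le C_2(\va)\va$ once the upper bound has been secured.
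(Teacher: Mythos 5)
Your proof is correct, and the constants it produces match the statement exactly: the operator-norm bootstrap $M\le C_2(1+\va)+M\va$ gives $C_2(\va)=C_2\frac{1+\va}{1-\va}$, and feeding $M\le C_2(\va)$ into the reverse triangle inequality at a net point of norm at least $1-\va$ gives precisely $C_1(\va)$. Note that the paper itself offers no proof of this lemma, citing \cite[Lemma 2.5]{BLM} instead, so there is nothing to compare against; your argument is the standard perturbation proof of that cited result, and your two points of care (finiteness of $M$ to legitimize the rearrangement, and the norm of the net point lying in $[1-\va,1+\va]$) are exactly the right ones.
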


From the above three lemmas, we can deduce
\begin{Lemma}\label{lem-3-4}   Let $1\leq p\leq 2$  be a fixed number. 
	Let $\{\xi^j\}_{j=1}^\infty$ be a sequence of independent random points identically  distributed according to  $\mu$.
	Then for any   $0< \va\leq \frac 18$ and  any integer   \begin{equation}
	 m\ge 16 K\va^{-2} v^2 \log \Bl( \f {4e} {\va} \cdot \f {N} {v} \Br),\label{4-3-a}
	\end{equation}  the   inequalities 
	\begin{align}\label{4-3-b}
	(1-4\va) \|f\|_p^p\leq  \frac 1m \sum_{j=1}^m| f(\xi^j)|^p \leq (1+4\va)\|f\|_p^p,\   \ \forall f\in \Sigma_v(\CD_N)
	\end{align}hold  with probability  $\ge 1- 2\exp\Bl(-\f {m\va^2}{ 16 K v}\Br)$. 
	\end{Lemma}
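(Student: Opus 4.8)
The plan is a net-plus-union-bound argument: prove the two-sided bound \eqref{4-3-b} uniformly on each fixed coordinate subspace $V_J$ with $|J|=v$ on a high-probability event, and then union-bound over the $\binom Nv$ choices of $J$. Writing $\Sigma_v(\CD_N)=\bigcup_{|J|=v}V_J$, it suffices to treat one $V_J$ and to sum the failure probabilities over $J$. For fixed $J$, I would regard the identity as a map $T\colon (V_J,\|\cdot\|_p)\to (V_J,\|\cdot\|_{m})$, where $\|g\|_m^p:=\frac1m\sum_{\nu=1}^m|g(\xi^\nu)|^p$; then \eqref{4-3-b} is precisely a two-sided comparison of $\|Tf\|_m$ with $\|f\|_p$. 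By Lemma \ref{lem-1-3} the unit ball $V_J^p$ carries an $\va$-net $\cN_J$ in $\|\cdot\|_p$ with $|\cN_J|\le(1+2/\va)^v\le(4/\va)^v$, and Lemma \ref{lem-1-2} (or an equivalent one-line bootstrapping of the operator norm of $T$) reduces the two-sided bound on all of $V_J$ to one on the finite set $\cN_J$, with an explicit and controllable deterioration of the constants.

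Next I would control $T$ at each net point by the concentration inequality of Lemma \ref{lem-2-2}. For a net point $f$ put $g:=f/\|f\|_p$ and $g_\nu:=|g(\xi^\nu)|^p-1$; since the $\xi^\nu$ are i.i.d.\ with law $\mu$, these are i.i.d.\ and mean zero, $\bE g_\nu=\|g\|_p^p-1=0$. The Nikolskii-type inequality \eqref{1-2:eq} gives $\|g\|_\infty^p\le Kv$, and together with $\|g\|_p=1$ this yields $M_1\le 2$ and $M_\infty\le Kv$ for the quantities in Lemma \ref{lem-2-2}. Hence $\bP\{|\frac1m\sum_\nu g_\nu|\ge\va\}\le 2\exp(-m\va^2/(8Kv))$, and on the complementary event $\frac1m\sum_\nu|g(\xi^\nu)|^p\in[1-\va,1+\va]$. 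Feeding these bounds (after taking $p$-th roots, using $1\le p\le 2$) into the extension step, a short computation shows the resulting constants give exactly the tolerance $4\va$ once $\va\le\frac18$. Finally I union-bound over all net points and all $\binom Nv\le(eN/v)^v$ subspaces: the total failure probability is at most $2\,(4eN/(\va v))^v\exp(-m\va^2/(8Kv))=2\exp\!\big(v\log\tfrac{4eN}{\va v}-\tfrac{m\va^2}{8Kv}\big)$, and hypothesis \eqref{4-3-a}, which says $v\log\frac{4eN}{\va v}\le \frac{m\va^2}{16Kv}$, forces this to be $\le 2\exp(-m\va^2/(16Kv))$.

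The work is in the interlocking of constants rather than in any single estimate. The two structural inputs are \eqref{1-2:eq}, which keeps $M_\infty$ only linear in $v$ and so fixes the concentration scale $8Kv$, and the combinatorial count $\binom Nv$, which produces the $\log(N/v)$ in the threshold. The delicate point is to balance the net fineness, the concentration deviation, and the blow-up of the extension step so that (i) the final $p$-th-power tolerance is the clean value $4\va$ for all $1\le p\le 2$, and (ii) the exponent in the probability and the threshold in \eqref{4-3-a} carry the same constant, here arranged by making the concentration scale $8Kv$ exactly half of $16Kv$, so that the net-entropy term $v\log\frac{4eN}{\va v}$ absorbs only half of the exponent. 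I expect this constant bookkeeping, rather than the probabilistic content, to be the main obstacle to a clean writeup.
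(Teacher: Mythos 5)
Your proposal is correct and follows essentially the same route as the paper's proof: an $\va$-net of each $V_J^p$ in $L_p$ via Lemma \ref{lem-1-3}, the Bernstein-type bound of Lemma \ref{lem-2-2} with $M_1\le 2$, $M_\infty\le Kv$ coming from \eqref{1-2:eq}, the extension from the net to all of $\Sigma_v(\CD_N)$ via Lemma \ref{lem-1-2}, and the same union-bound arithmetic splitting the exponent $m\va^2/(8Kv)$ into two halves so that \eqref{4-3-a} absorbs the entropy term. The only cosmetic difference is that you organize the argument subspace-by-subspace before the union bound, whereas the paper forms the single global net $\FF=\bigcup_J\FF_J$ first; the substance and the constants are identical.
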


\begin{proof}   By Lemma \ref{lem-1-3},	
	for each $J\subset [N]$ with $|J|=v$, 
 there exists  an $\va$-net    $\FF_J\subset V_J^p$  of $V_J^p$ in the space $L_p$  such that
	$|\FF_J| \leq \Bl(1+\f 2\va\Br)^v.$
	Let $\FF:=\bigcup_{J\subset [N],\  |J|=v} \FF_J.$
	Then 
	\begin{align}
	\log |\FF| &\leq \log\Bl[\binom{N}v \Bl(1+\f 2\va\Br)^v\Br]\leq \log\Bl[\Bl( \f{eN} v\Br)^v \Bl(1+\f 2\va\Br)^v\Br]\notag\\
	&\leq v \log \Bl( \f {4eN} {v\va} \Br).\label{1-5:eq}
	\end{align}

	On the other hand, 	 \eqref{1-2:eq} implies 
	\[ \|f\|_\infty^p \leq Kv,\   \ \forall f\in \FF.\]
 Thus,  using \eqref{1-5:eq} and  Lemma \ref{lem-2-2}, we  conclude   that the inequalities
	\begin{equation}\label{7-4-0} \Bl| \f 1 m \sum_{j=1}^m| f(\xi^j) |^p- \int_{\Og} |f|^p\, d\mu\Br| \leq \va\|f\|_p^p,\   \ \forall f\in \FF\end{equation}
	hold  with probability
	\begin{align*}
	\ge &1-2|\FF|\exp\Bl(- \f {m \va^2} {8Kv}\Br)=1-2\exp\Bl( \log |\FF|- \f {m \va^2} {8Kv}\Br)\\
	\ge &1-2\exp\Bl(v \log \Bl( \f {4eN} {v\va} \Br)-\f {m \va^2} {8Kv}\Br).
	\end{align*}
	Under the condition \eqref{4-3-a}, we have 
	\[ \f {m \va^2} {16Kv}\ge v \log \Bl( \f {4eN} {v\va} \Br), \]
implying that  \eqref{7-4-0} holds with probability
	\[ \ge 1- 2\exp\Bl(-\f {m \va^2} {16Kv}\Br).\]
	To complete the proof, we just need to note that by Lemma \ref{lem-1-2}, \eqref{7-4-0} implies \eqref{4-3-b}.

\end{proof}

Given $\bx=(x_1, x_2,\cdots, x_m)\in\Og^m$ and $f:\Og\to \RR$, we define 
\[ S(f, \bx):=(f(x_1), f(x_2), \cdots, f(x_m))\in\RR^m.\]
 We denote by  $L_p^{m}$  the space $\RR^m$ equipped with the discrete norm 
$$\|\bz\|_p :=\begin{cases}
(\f 1{m}  \sum_{j=1}^{m} |z_j|^p\Br)^{1/p},&\   \  1\leq p<\infty,\\
\underset{1\leq j\leq m} {\max} |z_j|,&\   \ p=\infty,
\end{cases},\   \ \bz=(z_1,\cdots, z_{m})\in\R^{m}.$$
Thus, 
\[ \|S(f, \bx)\|_p =\Bl( \f 1m \sum_{j=1}^m |f(x_j)|^p \Br)^{1/p},\    \    \ 1\leq p<\infty,\]
and \[\|S(f, \bx)\|_\infty =\max_{1\leq j\leq m} |f(x_j)|.\]
As usual, 	we identify  a vector in $\RR^m$  with a function on the set $[m]:=\{1,2,\cdots, m\}$. Under this identification, $\sp\{ S(\vi_i, \bx):\  \ 1\leq i\leq N\}$ is a subspace of $L_p^m$. 


Now we are in a position to prove   Theorem \ref{IT1}. 
Without loss of generality, we may assume that  $m\leq C K v^2 \log \Bl(  \f {eN} {v} \Br)$ for some large constant $C$, since otherwise  Theorem \ref{IT1} follows directly from Lemma \ref{lem-3-4}.   Let $\ell_v$ denote the smallest integer $\ge C K v^2 \log \Bl(  \f {eN} {v} \Br)$. 
Let  $m_1= m\ell_v $. 	Let $x_1, \cdots, x_{m_1}\in\Og$ be independent random points that are identically distributed according to  $\mu$. Let $\bx=(x_1, \cdots, x_{m_1})$. 
	By Lemma \ref{lem-3-4}, the inequalities 
	\begin{align}
&	\frac 45 \|f\|_p^p\leq \|S(f, \bx)\|_{p}^p \leq \frac 65\|f\|_p^p,\   \ \forall f\in\Sigma_v(\CD_N), \label{1-8}\\
	 \  \ \text{and}&\   \  	\frac 45 \|f\|_2^2\leq \|S(f, \bx)\|_{2}^2 \leq \frac 65\|f\|_2^2,\  \   \forall f\in\Sigma_v(\CD_N) \label{1-9}
	\end{align}
	hold simultaneously  with probability $$\ge 1-4\exp \Bl(-\f {cm_1}{K v}\Br)\ge 1-4e^{-mv}$$
	for some absolute constant  $c\in (0, 1)$, provided that the constant $C$ is large enough.

	Next,   we  fix the random points $x_1, \cdots, x_{m_1}$ such that  both \eqref{1-8} and \eqref{1-9}  hold.  
	Let $\{ \Ld_1,\cdots, \Ld_m\}$ be a partition of the set $[m_1]$ such that $|\Ld_j|=\ell_v$ for $1\leq j\leq m$. 
	Let $n_j\in\Ld_j$ denote a random variable  that is uniformly distributed in the finite set $\Ld_j$. 
	Assume in addition that   $x_1,\cdots, x_{m_1}, n_1,\cdots, n_m$ are independent. Define 
 $\xi^j =x_{n_j}$ for $j=1,2,\cdots, m$. It is easily seen that $\xi^1, \cdots, \xi^m\in\Og$ are independent random points  identically distributed according to  $\mu$.

	Now  we   consider the discrete  $L_p^{m_1}$ -norm    instead of the initial norm \newline $\|\cdot\|_{L_p(\Omega)}$. Let 
	\[ S(\CD_N,\bx):=\Bl\{ S(\vi_i, \bx):\  \ 1\leq i\leq N\Br\}\subset L_2^{m_1}.\]
		 \eqref{1-8} and \eqref{1-9}  imply that   $S(\CD_N,\bx)$ is a dictionary in  $L_2^{m_1}$ satisfying \eqref{I9} and \textnormal{(1.7a)} with constant $\f54K$ for the normalized counting measure on $[m_1]$. Moreover, by \eqref{1-2:eq} and \eqref{1-8}, we have
$$ \|S(f,\bx)\|_{\infty} \leq   (\f 54 Kv)^{1/p} \|S(f, \bx)\|_{p},\   \  \forall f\in\Sigma_v(\CD_N).$$
Note that  $$\log m_1 \leq  q_v:=C \log (2Kv)+ \log \log N.$$   Thus,  the regular Nikolskii's  inequality of  
the discrete  norms $L_q^{m_1}$ implies 
\[ \|S(f, \bx)\|_{\infty} \leq e\cdot  \|S(f, \bx)\|_{q_v},\   \   \forall f\in \Sigma_v(\CD_N).\]
By Theorem \ref{XT2} and  Theorem~\ref{YT1} applied to the discrete norm   of $L_p^{m_1}$,  we conclude that for any integer 
\[m\ge  C_p K v (\log N )  (\log (2vK))^2 (\log (2vK) +\log\log N), \]
the inequalities 
\begin{equation}\label{4.5}
\frac 34 \|S(f, \bx)\|_{p}^p \leq \|S(f, \xi)\|_{p}^p \leq \frac 54 \|S(f,\bx)\|_{p}^p,\  \ \forall f\in\Sigma_v(\CD_N)
\end{equation}
hold   with probability 
	\[\geq 1-  2  \exp\Bl(-\f {cm} {K v \log^2 (2Kv)}\Br),\]
where  $$\xi=(\xi^1, \cdots, \xi^m)=(x_{n_1}, x_{n_2},\cdots, x_{n_m}).$$
Combining \eqref{4.5} with \eqref{1-8}, we obtain 
\begin{equation}\label{4-10a}
\frac 35 \|f\|_{p}^p \leq \|S(f, \xi)\|_{p}^p \leq \frac 32 \|f\|_{p}^p,\  \ \forall f\in\Sigma_v(\CD_N).
\end{equation}

Finally,  we estimate the probability of the event  $E$  that \eqref{4-10a} holds.
Let $E_1$ denote the event that  both  \eqref{1-8} and \eqref{1-9}  hold.  The above argument then shows that 
$\PP(E_1) \ge 1-4 e^{-m v}$ and
$$ \chi_{E_1} \cdot \EE[ \chi_E |x_1, \cdots, x_{m_1}]= \chi_{E_1} \cdot \EE_{n_1, \cdots, n_m}  ( \chi_E) 
$$
$$
\ge \chi_{E_1} \left(1-  2  \exp\Bl(-\f {cm} {Kv \log^2 (2Kv)}\Br)\right),
$$
where $\EE_{n_1, \cdots, n_m}  ( \chi_E) = \EE[ \chi_E |x_1, \cdots, x_{m_1}]$ denotes the conditional expectation given $x_1,\cdots, x_{m_1}$; that is, the expectation computed with respect to the random variables $n_1, \cdots, n_m$,  fixing the variables  $x_1,\cdots, x_{m_1}$.  
Since $\chi_{E_1}$  is a function of $(x_1, x_2, \cdots, x_{m_1})$, it follows that 
\begin{align*}
\PP (E)& \ge \PP (E\cap E_1) =\EE[ \chi_{E\cap E_1}]=
\EE_{x_1, \cdots, x_n}\Bl[\chi_{E_1} \cdot \EE_{n_1, \cdots, n_m} ( \chi_{E})\Br]\\
&\ge \Bl(  1-  2  \exp\Bl(-\f {cm} {K v \log^2 (2Kv)}\Br)\Br) \cdot \EE [ \chi_{E_1}]\\
&\ge\Bl(  1-  2  \exp\Bl(-\f {cm} {Kv \log^2 (2Kv)}\Br)\Br)\Bl( 1-4 e^{-c mv}\Br)\\
&\ge   1-  2   \exp\Bl(-\f {c'm} {Kv \log^2 (2Kv)}\Br).
\end{align*}
This completes the proof.

	\section{A lower bound for universal discretization}
	\label{lb}
	
	The goal in this section is to construct  an example of a 
	uniformly bounded Riesz system   $\CD=\{\vi_j\}_{j=1}^\infty$  such  that for $\CD_N=\{\vi_j\}_{j=1}^N$ with $N=1,2,\cdots$ , 
	\begin{equation}\label{4-1}
	m(\mathbf{\mathcal{X}}_v(\D_N))\ge c \log N,  \  \   v=1,2,\cdots, N, 
	\end{equation}
	where     $c>0$ is an absolute constant. 
	By monotonicity, it is enough to show this last inequality for $v=1$.

	Let $L_2[0, 1]$ denote the Lebesgue $L_2$-space defined with respect to usual Lebesgue measure on $[0, 1]$. 
	Let
	$$\D:=\Bl\{ \f 1 {\sqrt{2}} \sin (\pi k x):\  \ k=1,2,\cdots\Br\}$$ and
	$$ \D_N :=\Bl\{ \f 1 {\sqrt{2}} \sin (\pi k x):\  \ k=1,2,\cdots, N\Br\},\  \ N=1,2,\cdots.$$
 Then $\CD$ is a  uniformly bounded orthonormal system    in $L_2[0, 1]$.
  Assume that $\{\xi^\nu\}_{\nu=1}^m$ is a set of $m$ points in $[0, 1]$ that provides universal sampling discretization for the collection 
  $\mathbf {\mathcal{X}}_1(\CD_N)$. Thus, 
  \begin{equation}\label{4-2}
  \f 12 \|g\|_2^2 \leq \f 1m \sum_{\nu=1}^m |g(\xi^\nu) |^2,\   \ \forall g\in\CD_N.
  \end{equation}
	We claim that \eqref{4-2} implies \eqref{4-1}.
	
	To show this claim, we need the simultaneous version of the Dirichlet's Theorem on diophantine approximation: 
	\begin{Lemma}\label{lem-4-1}\textnormal{\cite[p. 27, Theorem 1A]{S}}  
			Given any $\xi^1, \cdots, \xi^m \in [0, 1]$ and any  positive integer $N$,  there exist integers  $1\leq k \le N$ and
	$a_1,\dots,a_m\in\ZZ$ such that 
	\be\label{U2}
	\Bl|\xi^\nu -\f {a_\nu} k \Br| \le  k^{-1}N^{-1/m},\quad \nu=1,\dots,m.
	\ee
\end{Lemma}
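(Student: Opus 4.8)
The plan is to derive Lemma~\ref{lem-4-1} from Minkowski's linear forms theorem; this is the route that delivers the sharp exponent $N^{-1/m}$ \emph{together with} the constraint $k\le N$ for \emph{every} positive integer $N$, not merely for perfect $m$-th powers. First I would recast the conclusion as a statement about integer points. Writing $\mathbf{x}=(x_0,x_1,\dots,x_m)\in\RR^{m+1}$, introduce the $m+1$ linear forms
\[
L_0(\mathbf{x}):=x_0,\qquad L_\nu(\mathbf{x}):=\xi^\nu x_0-x_\nu\quad(\nu=1,\dots,m).
\]
Since $|k\xi^\nu-a_\nu|=|L_\nu(k,a_1,\dots,a_m)|$ and $k=L_0(k,a_1,\dots,a_m)$, the assertion \eqref{U2} is equivalent to producing a nonzero integer vector $\mathbf{x}\in\ZZ^{m+1}$ with $1\le x_0\le N$ and $|L_\nu(\mathbf{x})|\le N^{-1/m}$ for all $\nu$; one then sets $k=x_0$, $a_\nu=x_\nu$, and divides by $k$.

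Next I would verify the determinant and volume bookkeeping that makes Minkowski applicable with the \emph{exact} constants. The coefficient matrix of $(L_0,L_1,\dots,L_m)$ is lower triangular with diagonal $(1,-1,\dots,-1)$, so its determinant has absolute value $1$. Choosing the bounds $c_0:=N$ and $c_\nu:=N^{-1/m}$ $(1\le\nu\le m)$ gives
\[
\prod_{i=0}^{m}c_i=N\cdot\bigl(N^{-1/m}\bigr)^m=1=|\det(L_i)|.
\]
Equivalently, the symmetric convex body $R:=\{\mathbf{x}:\ |L_0(\mathbf{x})|\le N,\ |L_\nu(\mathbf{x})|\le N^{-1/m}\ \forall\,\nu\}$ is compact and has $\vol(R)=2^{m+1}$, which is $2$ raised to the ambient dimension $m+1$. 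By Minkowski's linear forms theorem (in its form for closed symmetric convex bodies of volume $\ge 2^{m+1}$) there exists a nonzero $\mathbf{x}\in\ZZ^{m+1}\cap R$.

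Finally I would extract $k$ and the integers $a_\nu$. Replacing $\mathbf{x}$ by $-\mathbf{x}$ if necessary, assume $x_0\ge 0$. If $x_0=0$, then some $x_\nu\ne 0$ while $|x_\nu|=|L_\nu(\mathbf{x})|\le N^{-1/m}<1$ whenever $N\ge 2$, which is impossible; and the residual case $N=1$ is immediate on taking $k=1$ and $a_\nu$ the nearest integer to $\xi^\nu$. Hence $k:=x_0\in\{1,\dots,N\}$ and $a_\nu:=x_\nu$ satisfy $|k\xi^\nu-a_\nu|=|L_\nu(\mathbf{x})|\le N^{-1/m}$, which is precisely \eqref{U2} after dividing by $k$.

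The one point that genuinely requires care — and the only real obstacle — is that here the product of the bounds equals the determinant \emph{exactly}, so the hypothesis of Minkowski's theorem holds with equality; one must therefore invoke the closed (compact) version of the theorem, or, equivalently, enlarge $c_0$ to $N+\tfrac12$ and use the open version (an integral $x_0\le N+\tfrac12$ still satisfies $x_0\le N$). By contrast, the purely combinatorial pigeonhole argument — subdividing $[0,1)^m$ into $\lceil N^{1/m}\rceil^m$ congruent cubes of side $\le N^{-1/m}$ and applying the box principle to the $N+1$ points $(\{k\xi^1\},\dots,\{k\xi^m\})$, $k=0,1,\dots,N$ — only yields the stated bound cleanly when $N$ is a perfect $m$-th power, since otherwise it forces $k\le\lceil N^{1/m}\rceil^m>N$. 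This mismatch between $\lceil N^{1/m}\rceil^m$ and $N$ is exactly what the continuous volume count in Minkowski's theorem eliminates, which is why I would follow the Minkowski route to secure the bound uniformly in $N$.
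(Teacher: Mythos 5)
Your proof is correct. Note that the paper offers no proof of this lemma to compare against: it quotes the statement as Theorem 1A from Schmidt's lecture notes. Your Minkowski linear-forms argument is essentially the classical proof of that cited theorem: the coefficient matrix has determinant $\pm 1$, the bounds multiply to $N\cdot\bigl(N^{-1/m}\bigr)^m=1$ so Minkowski applies only in its equality case, and you handle this correctly via the closed-body version (or, equivalently, the $c_0=N+\tfrac12$ perturbation with the open version, using integrality of $x_0$), and you also dispose correctly of the degenerate possibilities $x_0=0$ and $N=1$. Your closing remark is likewise accurate and is the reason the geometry-of-numbers route is the right one here: the naive pigeonhole argument with $\lfloor N^{1/m}\rfloor^m$ boxes only yields the weaker modulus $1/\lfloor N^{1/m}\rfloor$ for general $N$, matching \eqref{U2} only when $N$ is a perfect $m$-th power.
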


Now let $1\leq k\leq N$ and $a_1,\cdots, a_m$ are integers as given in Lemma \ref{lem-4-1}.   Let $g_k(x)=\sin (\pi kx)\in\D_N$. Then on one hand, 
$\|g_k\|_2^2=2$, but on the other hand,  for each $1\leq \nu\leq m$,
\begin{align*}
 |g_k(\xi^\nu)|&=| \sin (\pi k \xi^\nu)|=| \sin (\pi k \xi^\nu)-\sin (\pi  a_\nu)|\leq | \pi k \xi^\nu-\pi a_\nu|\\
 &=\pi k \Bl| \xi^\nu-\f {a_\nu} k\Br|\leq \pi N^{-1/m}.
\end{align*}
Thus, using \eqref{4-2}, we obtain 
\[ 1 =\f 12 \|g_k\|_2^2\leq \f 1m \sum_{\nu=1}^m |g_k(\xi^\nu)|^2 \leq \pi N^{-1/m},\]
implying 
\[ m\ge \f {\log N} {\log \pi}.\]

\section{Sparse sampling recovery}
\label{ssr}

We begin this section with the observation that the universal discretization of the $L_2$ norm is closely connected with the concept of Restricted Isometry Property (RIP), which is very important in
compressed sensing. 

 Let $U=\{\bu^j\}_{j=1}^N$ be a system of column vectors   from
 $\bbC^m$. We form a matrix $\bU:= [\bu^1\  \bu^2\  ...\  \bu^N]$ with vectors $\bu^j$ being the columns of this matrix. We say that matrix $\bU$  or the system $U$ has the RIP property with parameters $D$ and $\delta \in (0,1)$ if for any subset $J$ of indexes from $\{1,2,\dots,N\}$ with cardinality $|J|\le D$,  and for any vector $\ba=(a_1,\dots,a_N)\in \bbC^N$  supported on $J$ (i.e., $a_j=0, j\notin J$), we have
 $$
 (1-\delta)\|\ba\|_2^2 \le \left\|\sum_{j\in J} a_j\bu^j\right\|_2^2=\|\bU\ba\|_2^2 \le  (1+\delta)\|\ba\|_2^2.
 $$
 Let $\Omega$ be a compact subset of $\R^d$ with the probability measure $\mu$. 
  For a dictionary $\D_N=\{\vi_i\}_{i=1}^N$ and a set of points $\xi:= \{\xi^j\}_{j=1}^m \subset \Omega $ consider the system of vectors $G_N(\xi)=\{\pmb \vi_i(\xi) \}_{i=1}^N$ where 
  $$
  \pmb\vi_i(\xi) :=m^{-\f12} S(\vi_i,\xi)=m^{-1/2} (\vi_i(\xi^1),\dots,\vi_i(\xi^m))^T,\quad  i=1,\dots,N.
  $$ 
Suppose that $\D_N$ is an orthonormal system.   On one hand,
for any $v$-sparse vector  $\ba=(a_1,\cdots, a_N)\in\CC^N$ that is supported on a set $I\subset\{1,2,\cdots, N\}$  of cardinality $v$, \  \  \footnote{This means that  $a_i=0$ for $i\notin I$} and for
  $f= \sum_{i\in I}a_i\vi_i\in \Sigma_v(\D_N)$, we have
$$
\|f\|_2^2 = \|\ba\|_2^2, \quad S(f,\xi)= (f(\xi^1),\dots,f(\xi^m))^T = m^{1/2}\sum_{i\in I} a_i\pmb\vi_i.
$$
On the other hand, however,  
$$
\left\|\sum_{i\in I} a_i\pmb  \vi_i\right\|_2^2 = \frac{1}{m}\sum_{j=1}^m |f(\xi^j)|^2.
$$
Thus,  the set $\xi:= \{\xi^j\}_{j=1}^m \subset \Omega $ provides {\it universal discretization} of the $L_2$ norm for the collection $\cX_v(\D_N)$ with constants $C_1=1-\delta$ and $C_2=1+\delta$ if and only if the  the system $G_N(\xi)$ has the RIP property with parameters $v$ and $\delta \in (0,1)$.
 
 The reader can find results on the RIP properties of systems $G_N(\xi)$ associated with uniformly bounded orthonormal systems $\D_N$ in the book \cite{FR}. For illustration we formulate the following  result 
 from \cite{FR}. 
 
 \begin{Theorem}\label{FR} \textnormal{\cite[p. 405, Theorem 12.31]{FR} } There exists a universal constant $C$ with the following properties. Let $\D_N$ be uniformly bounded orthonormal system such that $\|g_i\|_\infty \le K$, $i=1,2,\dots,N$. Assume that $\xi^1$,...,$\xi^m$ are independent random points that are identically  distributed  according to $\mu$ on 
 	$\Omega$. Then with probability at least $1-N^{-(\ln N)^3}$ the system $G_N(\xi)$ has the RIP property with parameters $s$ and $\delta$ provided
 \be\label{FR1}
 m\ge CK^{-2}\delta^{-2}s(\ln N)^4.
 \ee
 \end{Theorem}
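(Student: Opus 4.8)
The plan is to follow the standard route in compressed sensing for subsampled bounded orthonormal systems, recasting the restricted isometry constant as the supremum of a centered chaos process and controlling it by generic chaining. Write $\bU$ for the $m\times N$ matrix with entries $\bU_{li}=m^{-1/2}\vi_i(\xi^l)$, so that its $i$-th column is the vector $\pmb\vi_i(\xi)$ appearing in the statement. By definition the restricted isometry constant is
$$\delta_s(\bU)=\sup_{\substack{x\in\CC^N,\ \|x\|_2\le 1\\ \|x\|_0\le s}}\Bl|\,\|\bU x\|_2^2-\|x\|_2^2\,\Br|.$$
Writing $a_l=(\vi_1(\xi^l),\dots,\vi_N(\xi^l))$ for the random rows of $\sqrt m\,\bU$, orthonormality of $\D_N$ gives $\EE|\langle a_l,x\rangle|^2=\|x\|_2^2$, so that $\delta_s(\bU)=\sup_x\bl|\f1m\sum_{l=1}^m(|\langle a_l,x\rangle|^2-\EE|\langle a_l,x\rangle|^2)\br|$ is exactly the deviation of a centered empirical process indexed by the set $D_{s,N}$ of $s$-sparse unit vectors. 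The whole problem is thus to bound this supremum first in expectation and then in probability.

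First I would bound $\EE\,\delta_s(\bU)$. Symmetrizing with a Rademacher sequence $\{\varepsilon_l\}$ converts the empirical process into a chaos process of the form $\EE\sup_{x}\bl|\sum_l\varepsilon_l|\langle a_l,x\rangle|^2\br|$, which I would control by the Krahmer--Mendelson--Rauhut bound on suprema of second-order chaos: the relevant quantity is dominated by $\gamma_2(\CT)\bl(\gamma_2(\CT)+d_F(\CT)\br)+d_{2\to2}(\CT)\,d_F(\CT)$, where $\CT$ is the set of matrices naturally associated with $D_{s,N}$ in the chaos representation, $\gamma_2$ is Talagrand's functional for the operator-norm metric, and $d_F,d_{2\to2}$ are the Frobenius and operator radii of $\CT$. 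One checks that $d_F(\CT)\lesssim 1$ and $d_{2\to2}(\CT)\lesssim K\sqrt{s/m}$, so the main work is the $\gamma_2$ estimate.

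The heart of the proof is estimating $\gamma_2(\CT)$, which I would do through Dudley's entropy integral $\gamma_2\lesssim\int_0^\infty\sqrt{\log N_u(\CT,\|\cdot\|)}\,du$ together with a two-regime covering bound for the $s$-sparse unit ball in the mixed $\ell_2$/weighted-$\ell_\infty$ metric induced by the rows: a volumetric estimate $\log N_u\lesssim s\log(eN/s)$ for small $u$, and an empirical-method (Maurey) estimate $\log N_u\lesssim u^{-2}K^2\log N$ for large $u$. Integrating and balancing the two regimes is exactly what produces the accumulated logarithmic factors. Carrying only the powers of $\log N$ through, this shows that under a requirement on $m$ of the order $K^2\delta^{-2}s(\log N)^4$, namely the hypothesis \eqref{FR1}, both $\gamma_2(\CT)\lesssim\delta$ and $d_{2\to2}(\CT)\lesssim\delta$, whence $\EE\,\delta_s(\bU)\le\delta/2$. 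I would track the logarithmic exponents carefully here rather than grind the absolute constants.

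The final, and I expect hardest, step is to upgrade the expectation bound to the stated tail $1-N^{-(\ln N)^3}$. For this I would invoke a deviation inequality for suprema of second-order chaos processes --- Talagrand's concentration inequality, in the sharp form given by Dirksen --- which yields a subgaussian regime governed by $d_F(\CT)$ and a subexponential regime governed by $d_{2\to2}(\CT)$. Choosing the deviation level of order $(\ln N)^2$ and re-expressing the exponents through \eqref{FR1} forces the failure probability below $N^{-(\ln N)^3}$, completing the argument. The delicate points are the sharp two-regime entropy estimate feeding $\gamma_2$ and the matching chaos concentration bound; the reformulation in the first paragraph and the radius computations are routine.
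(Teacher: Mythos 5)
The first thing to note is that the paper does not prove this statement at all: Theorem \ref{FR} is quoted from \cite[Theorem 12.31]{FR} and used as a black box (its only role is to lead, via \cite{HR} and \cite{Bour}, to the bound (\ref{HR1})). So your proposal can only be judged against the standard proof in \cite{FR}, Chapter 12. Incidentally, the paper's statement contains a typo: the hypothesis should read $m\ge CK^{2}\delta^{-2}s(\ln N)^4$, not $K^{-2}$; since $K\ge 1$ for a bounded orthonormal system, the $K^{-2}$ version is a strictly weaker hypothesis and is not what \cite{FR} proves. Your outline silently works with $K^2$, which is the correct form.

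Your reformulation of $\delta_s$ as the supremum of a centered empirical process, the radius computations, and the two-regime covering estimate (volumetric for small $u$, Maurey's empirical method for large $u$) are exactly the ingredients of the known proof. The genuine gap lies in the two probabilistic tools you invoke. The Krahmer--Mendelson--Rauhut bound, and Dirksen's deviation version of it, apply to suprema of the form $\sup_{A\in\CT}\bigl|\,\|A\xi\|_2^2-\EE\|A\xi\|_2^2\bigr|$ where $\xi$ is a random vector with \emph{independent} mean-zero subgaussian entries. In the present model the randomness enters through the i.i.d.\ rows $a_l=(\vi_1(\xi^l),\dots,\vi_N(\xi^l))$, whose $N$ entries are all functions of the single random point $\xi^l$ and hence strongly dependent, so the quadratic form $\frac1m\sum_l|\langle a_l,x\rangle|^2$ is not a chaos of the required type in any vector with independent coordinates; moreover, after symmetrization the process $\sum_l\varepsilon_l|\langle a_l,x\rangle|^2$ is \emph{linear}, not quadratic, in the Rademacher variables, so it is not a KMR-type chaos in $\varepsilon$ either. (KMR is the right tool for, e.g., partial random circulant matrices, where the generating vector has independent entries; for subsampled bounded orthonormal systems it simply does not apply --- this is precisely why the BOS case required the separate Rudelson--Vershynin argument, and why improving its logarithmic factors, as in \cite{HR} and \cite{Bour}, was a distinct line of work.) The repair is standard: bound the symmetrized supremum \emph{conditionally} on the sample points, where the Rademacher process is subgaussian with respect to the empirical pseudometric dominated by $\max_l|\langle a_l,x-y\rangle|$; feed your two covering estimates into Dudley's integral and absorb the resulting term involving $\sqrt{\EE\,\delta_s}$ to get $\EE\,\delta_s\le\delta/2$ under (\ref{FR1}); then upgrade to the stated tail not with chaos concentration but with Bernstein's (Talagrand-type) inequality for suprema of bounded empirical processes (\cite{FR}, Theorem 8.42), using that $\bigl|\,|\langle a_l,x\rangle|^2-\|x\|_2^2\bigr|\le K^2 s$ on $s$-sparse unit vectors. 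With those two substitutions your architecture becomes the proof in \cite{FR}; as written, the two named key lemmas cannot be invoked.
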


 We now turn to the proof of Theorem \ref{IT2}.
 It   relies on  a result from \cite{LT} (see also \cite{VTbookMA}, Section 8.7) under the following assumption on the dictionary.\\

 {\bf UP($u,D$). ($u,D$)-unconditional property.}   We say that a dictionary $\D=\{\vi_i\}_{i\in I}$ of  elements 
 in a Hilbert space $H=(H, \|\cdot\|)$ 
  is ($u,D$)-unconditional with  constant $U>0$ for some integers $1\leq u\leq D$ if for any 
   $f=\sum_{i\in T} c_i \vi_i\in \Sigma_u(\D)$ with  $T\subset I$ and $|T|=u$,  and  for  any $A\subset T$ and  $J\subset I\setminus A$ such that   $|A|+|J| \le D$,  we have
\be\label{UP}
\Bl\|\sum_{i\in A} c_i \vi_i\Br\|\leq U\inf_{g\in V_J}\Bl \|\sum_{i\in A} c_i \vi_i-g\Br\|,
\ee
where $ V_J(\CD):=\spn\{\vi_i:\  \ i\in J\}$.

Recall that the notation  $\text{WOMP}\bigl(\D; t\bigr)_H$     denotes the WOMP  that is  defined with respect to  a  weakness parameter $t\in (0, 1]$ and   a dictionary $\CD$ in a Hilbert  space $H$.

 \begin{Theorem}[{\cite[Corollary I.1]{LT}}]\label{ssrT1} Let $\CD$ be a dictionary in a Hilbert space $H=(H, \|\cdot\|)$ having  the property  {\bf UP($u,D$)}  with   constant $U>0$ for some  integers $1\leq u\leq D$.   Let $f_0\in H$, and let $t\in (0, 1]$ be a given weakness parameter. 
  Then there exists a positive constant $c_\ast:=c(t,U)$  depending only on $t$ and  $U$ such that  the $\text{WOMP}\bigl(\D; t\bigr)_H$ applied to  $f_0$ gives
  $$
  \left\|f_{\left \lceil{c_\ast v}\right\rceil} \right\| \le C\sigma_v(f_0,\D)_H,\  \ v=1,2,\cdots, \min\Bl\{u,   \floor*{ D/{(1+c_\ast)}}\Br\},
  $$
   where  $C>1$ is an absolute constant, and $f_k$ denotes the residue of $f_0$ after the $k$-th iteration of the algorithm.

\end{Theorem}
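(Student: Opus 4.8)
The plan is to reconstruct the greedy-approximation argument of \cite{LT} directly for $\text{WOMP}(\CD;t)_H$, the organizing idea being that the property {\bf UP($u,D$)} is exactly the restricted-isometry/stability condition needed to make the weak greedy steps productive. Write $\sigma:=\sigma_v(f_0,\CD)_H$ and fix a near-best approximant $\phi=\sum_{i\in T}x_i\vi_i$ with $|T|=v$ and $\|f_0-\phi\|\le(1+\eta)\sigma$ for an arbitrarily small $\eta>0$; put $g:=f_0-\phi$. For the running residual $f_k=f_0-G_k(f_0)$ I would track the set $T_k:=T\setminus\{j_1,\dots,j_k\}$ of still-uncaptured target atoms, its size $b_k:=|T_k|$, and the tail $\phi^{(k)}:=\sum_{i\in T_k}x_i\vi_i$. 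Two elementary facts are used repeatedly. Since $G_k$ is the orthogonal projection onto $H_k\supseteq H_{k-1}+\sp\{\psi_{j_k}\}$, the energy-decrease inequality holds:
\[
\|f_k\|^2\le\|f_{k-1}\|^2-|\langle f_{k-1},\psi_{j_k}\rangle|^2\le\|f_{k-1}\|^2-t^2\Bl(\sup_i|\langle f_{k-1},\psi_i\rangle|\Br)^2 .
\]
And because the captured target atoms lie in $H_{k-1}\perp f_{k-1}$, one has $\langle f_{k-1},\phi\rangle=\langle f_{k-1},\phi^{(k-1)}\rangle$, hence $\|f_{k-1}\|^2=\langle f_{k-1},\phi^{(k-1)}\rangle+\langle f_{k-1},g\rangle$.

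Next I would convert {\bf UP($u,D$)} into the two stability estimates actually needed. Applying \eqref{UP} with $A=T_{k-1}$ and $J=\{j_1,\dots,j_{k-1}\}$ (these are disjoint by construction, and $|A|+|J|=b_{k-1}+(k-1)\le D$ throughout the admissible range) gives $\|\phi^{(k-1)}\|\le U\,\dist(\phi^{(k-1)},H_{k-1})=U\|(I-G_{k-1})\phi\|\le U(\|f_{k-1}\|+(1+\eta)\sigma)$. Applying \eqref{UP} to single atoms inside $T$ yields a lower Riesz bound with a constant $c_U=c_U(U)>0$, namely $\big(\sum_{i\in T_{k-1}}|x_i|^2\|\vi_i\|^2\big)^{1/2}\le c_U^{-1}\|\phi^{(k-1)}\|$. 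Feeding these into the residual identity via Cauchy--Schwarz over $T_{k-1}$ gives the central one-step estimate: whenever $\|f_{k-1}\|\ge 2(1+\eta)\sigma$ (so that $\langle f_{k-1},\phi^{(k-1)}\rangle\ge\tfrac12\|f_{k-1}\|^2$),
\[
\sup_i|\langle f_{k-1},\psi_i\rangle|\ \ge\ \frac{\langle f_{k-1},\phi^{(k-1)}\rangle}{\sqrt{b_{k-1}}\,\Bl(\sum_{i\in T_{k-1}}|x_i|^2\|\vi_i\|^2\Br)^{1/2}}\ \ge\ \frac{c_U}{4U}\,\frac{\|f_{k-1}\|}{\sqrt{b_{k-1}}} .
\]
Substituting into the energy-decrease inequality produces a per-step contraction $\|f_k\|^2\le\bigl(1-\kappa/b_{k-1}\bigr)\|f_{k-1}\|^2$ with $\kappa=\kappa(t,U)=t^2c_U^2/(16U^2)$, valid as long as the residual stays above the threshold $2(1+\eta)\sigma$. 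Here the constraints $|A|+|J|\le D$ and $|T|=v\le u$ (padding $T$ to size $u$ with zero coefficients so \eqref{UP} applies) are exactly what reproduces the admissible range $v\le\min\{u,\lfloor D/(1+c_\ast)\rfloor\}$.

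The step I expect to be the main obstacle is upgrading this contraction to a number of iterations that is \emph{purely linear} in $v$, i.e.\ $\lceil c_\ast v\rceil$ with $c_\ast=c_\ast(t,U)$ and an \emph{absolute} final constant $C$. Iterating $\|f_k\|^2\le(1-\kappa/v)\|f_{k-1}\|^2$ naively only drives the residual below $2(1+\eta)\sigma$ after $\gtrsim(v/\kappa)\log(\|f_0\|/\sigma)$ steps, carrying a spurious factor $\log(\|f_0\|/\sigma)$; eliminating this factor is the genuinely hard content of \cite{LT}. The route I would take to remove it avoids trying to drive the energy all the way down by contraction, and instead exploits the fact that once every target atom is captured ($b_k=0$) one has $\phi\in H_k$, whence $f_k=(I-G_k)g$ and thus $\|f_k\|\le\|g\|\le(1+\eta)\sigma$ \emph{for free}. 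So it suffices to show that \emph{capturing} the at most $v$ target atoms costs only $O(v)$ productive steps. The plan is a stopping-time/blockwise argument: using the one-step estimate together with \eqref{UP}, show that on any block of steps during which the residual exceeds the threshold and $b$ does not decrease, the energy contracts at a rate governed by $1/b$, so that within $O(b_{k-1})$ such steps a fixed positive fraction of the remaining target atoms must be captured (else the uncaptured tail would survive a contraction that \eqref{UP} forbids). Summing the geometrically shrinking block lengths $v+v/2+\cdots$ telescopes to $O(v)$ total steps, after which either the residual has dropped below the threshold or $b=0$ and the free bound applies; in both cases $\|f_{\lceil c_\ast v\rceil}\|\le C\sigma$, and letting $\eta\to0$ completes the proof.
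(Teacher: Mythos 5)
Note first that the paper does not prove this statement at all: it is quoted verbatim from \cite[Corollary I.1]{LT}, so your attempt must be measured against the proof in that reference, which follows Zhang's iteration-count scheme for OMP adapted to weak selections. Your setup is the standard and correct one (energy decrease per step, the threshold trick $\|f_{k-1}\|\ge 2(1+\eta)\sigma$, and the use of {\bf UP} with $A=T_{k-1}$, $J=\{j_1,\dots,j_{k-1}\}$ to bound the uncaptured tail by $U(\|f_{k-1}\|+(1+\eta)\sigma)$). One repairable inaccuracy: applying \eqref{UP} to \emph{single atoms} only gives $|x_i|\,\|\vi_i\|\le U\|\phi^{(k-1)}\|$ for each $i$, which after summing costs an extra $\sqrt{b_{k-1}}$ and degrades your contraction to rate $1/b^2$. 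To get the genuine Riesz-type bound $\bigl(\sum_{i\in T_{k-1}}|x_i|^2\|\vi_i\|^2\bigr)^{1/2}\le 2U\|\phi^{(k-1)}\|$ you need \eqref{UP} on \emph{all} subsets $A\subset T_{k-1}$ (suppression unconditionality), then random-sign averaging in the Hilbert space: $\sum_i |x_i|^2\|\vi_i\|^2=\mathbb{E}\,\bigl\|\sum_i \epsilon_i x_i\vi_i\bigr\|^2\le 4U^2\|\phi^{(k-1)}\|^2$.

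The genuine gap is exactly where you predicted it, and your proposed fix does not close it. The blockwise claim that ``within $O(b)$ steps a fixed positive fraction of the remaining target atoms must be captured'' is unjustified and false in general: WOMP carries no guarantee of ever selecting atoms of the comparison support $T$ (Lebesgue-type inequalities hold without any support recovery), and the inequality $\|\phi^{(k)}\|\le U(\|f_k\|+(1+\eta)\sigma)$ only forces capture of \emph{some} atom when the residual contracts below the uncaptured-tail level; that atom may carry an arbitrarily small coefficient, so $b$ need not drop by a fixed fraction, and your geometric sum $v+v/2+\cdots$ never gets started. Meanwhile the unconditional per-step contraction $(1-\kappa/b)$ alone yields only $O\bigl(v\log(\|f_0\|/\sigma)\bigr)$ steps --- precisely the spurious factor you set out to remove. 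The actual mechanism in \cite{LT} (following Zhang) is different in kind: one fixes the near-best approximant, partitions its support into level sets of the coefficient magnitudes, and runs the contraction against a \emph{nested dyadic family of sub-approximants} $T_1\subset T_2\subset\cdots$ with $|T_{j+1}|\le 2|T_j|$; the excess energy over the error of the $j$-th sub-approximant halves within $O(|T_j|)$ steps, and the logarithm is paid along the geometric ladder of sub-support sizes $\sum_j 2^j=O(v)$ rather than in iteration count. Without this comparison against a \emph{family} of approximants of varying sparsity (your argument tracks only the single fixed $\phi$), the linear-in-$v$ bound $\lceil c_\ast v\rceil$ with an absolute constant $C$ cannot be reached along the route you sketch.
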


   
\begin{proof}[Proof of Theorem \ref{IT2}]
Using (\ref{Riesz0}), we obtain that  for any  sets $A\subset \{1,2,\cdots, N\}$ and  $\Lambda\subset \{1,\cdots, N\}\setminus A$ , and for  any  sequences $\{x_i\}_{i\in A},$ $\{c_i\}_{i\in\Ld}\subset\CC$, 
$$
\Bl\|\sum_{i\in A}x_i\ff_i-\sum_{i\in\Lambda}c_i\ff_i\Br\|_{L_2(\Omega,\mu)}^2 \ge R_1^2  \sum_{i\in A} |x_i|^2 \ge R_2^{-2} R_1^2\Bl \|\sum_{i\in A}x_i\ff_i \Br\|_{L_2(\Omega,\mu)}^2,
$$
meaning  that the dictionary  $\CD_N$ has the {\bf UP}$(v,N)$ property  with constant $U_1 = R_2/R_1$  in the  space $L_2(\Omega,\mu)$  for any integer $1\leq v< N$. 
It then follows from the  discretization inequalities  
(\ref{I3}) with $\mathbf {\mathcal{X}}=\mathbf {\mathcal{X}}_u(\D_N)$ that the dictionary $\D:=\CD_N(\Omega_m)$, which is   the restriction 
of $\CD_N$ on $\Omega_m=\{\xi^1, \cdots,\xi^m\}$, has  
the  properties  {\bf UP($v,u$)} in the space  $L_2(\Omega_m,\mu_m)$  with  constant $U_2 =U_1 3^{1/2}$ for any integer $1\leq v\leq u$.
Thus,  applying Theorem \ref{ssrT1} to the discretized dictionary  $\CD_N(\Omega_m)$ in the Hilbert space  $L_2(\Omega_m,\mu_m)$, we conclude that   the algorithm  $$\text{WOMP}\bigl(\CD_N(\Omega_m); t\bigr)_{L_2(\Omega_m,\mu_m)}$$ applied to  $f_0\Bl|_{\Og_m} $ gives
$$
\left\|f_{cv} \right\|_{L_2(\Omega_m,\mu_m)}\le C\sigma_v(f_0,\CD_N(\Omega_m))_{L_2(\Omega_m,\mu_m)}
$$
whenever $v+c v\leq u$, where  $c =c(t, R_1, R_2)\in\NN$. 
 This proves (\ref{mp}) in Theorem \ref{IT2}. 

We now derive (\ref{mp2}) from (\ref{mp}).   Clearly, 
$$
\sigma_v(f_0,\CD_N(\Omega_m))_{L_2(\Omega_m,\mu_m)} \le \sigma_v(f_0,\CD_N )_\infty.
$$
Let $f\in \Sigma_v(\CD_N)$ be such that  $\|f_0-f\|_\infty \le 2 \sigma_v(f_0,\CD_N)_\infty$. Then (\ref{mp}) implies 
$$
\|f - G_{cv}(f_0,\CD_N(\Omega_m))\|_{L_2(\Omega_m,\mu_m)} \le \|f-f_0\|_{L_2(\Omega_m,\mu_m)} +\|f_{cv}\|_{L_2(\Omega_m,\mu_m)} 
$$
$$
\le (2+C)\sigma_v(f_0,\CD_N)_\infty.
$$
Using that $f - G_{cv}(f_0,\CD_N(\Omega_m)) \in \Sigma_u(\CD_N)$, by discretization (\ref{I3}) we 
conclude that
\be\label{ssr3}
\|f - G_{cv}(f_0,\CD_N(\Omega_m))\|_{L_2(\Omega,\mu)} \le 2^{1/2}(2+C)\sigma_v(f_0,\CD_N)_\infty.
\ee
Finally,
$$
\|f_{cv}\|_{L_2(\Omega,\mu)} \le \|f-f_0\|_{L_2(\Omega,\mu)} + \|f - G_{cv}(f_0,\CD_N(\Omega_m))\|_{L_2(\Omega,\mu)}.
$$
This and (\ref{ssr3}) prove (\ref{mp2}).

\end{proof}


Universal discretization also has  some  interesting applications in sparse sampling 
recovery in the norm $L_2$, as was shown in \cite{DT3}.  To see this, we assume that $\Og$ is a compact domain for the sake of  convenience. 
Given a finite-dimensional subspace  $X$ of  bounded  functions on $\Og$, and  a vector  $ \xi=(\xi^1,\cdots,\xi^m)\in  \Omega^m$,  the 
 classical  least squares recovery operator (algorithm) is defined as  (see, for instance, \cite{CM})
 $$
 LS(\xi,X)(f):=\text{arg}\min_{u\in X} \|S(f-u,\xi)\|_{2},\quad
 $$
 where $S(g,\xi):=(g(\xi^1), \cdots,  g(\xi^m))$, and 
 $$
 \|S(g,\xi)\|_{2}:= \left(\f 1m \sum_{\nu=1}^m  |g(\xi^\nu)|^2\right)^{1/2} .
 $$
 With the help of the classical least squares  algorithms, 
 we  define in  \cite{DT3} a new  nonlinear  algorithm  for a given collection $\cX=\{X(n)\}_{n=1}^k$ of finite-dimensional subspaces of $\cC(\Og)$:
 $$
n(\xi,f) := \text{arg}\min_{ 1\le n\le k}\|f-LS(\xi,X(n))(f)\|_2,
$$
\be\label{I4}
  LS(\xi,\cX)(f):= LS(\xi,X(n(\xi,f)))(f).
\ee

 \begin{Definition}\label{ID1} We say that a set $\xi:= \{\xi^j\}_{j=1}^m \subset \Omega $ provides {\it one-sided universal discretization} with constant $C_1$ for a  collection $\cX:= \{X(n)\}_{n=1}^k$ of finite-dimensional  linear subspaces of functions on $\Og$ if 
 $$
C_1\|f\|_2^2 \le \frac{1}{m} \sum_{j=1}^m |f(\xi^j)|^2\quad \text{for any}\quad f\in \bigcup_{n=1}^k X(n) .
$$
We denote by $m(\cX,C_1)$ the minimal positive integer  $m$ such that there exists a set $\xi$ of $m$ points, which
provides one-sided universal discretization with constant $C_1$ for the collection $\cX$. 
\end{Definition}

For $\xi=(\xi^1,\cdots,\xi^m)\in \Omega^m$ let $\mu_\xi$ denote the probability measure
\begin{equation}\label{5-5}
\mu_\xi := \frac{1}{2} \mu + \frac{1}{2m} \sum_{j=1}^m \delta_{\xi^j},
\end{equation}
where $\delta_\bx$ denotes the Dirac measure supported at a point $\bx$.

We proved the following  conditional theorem in \cite{DT3}.

 \begin{Theorem}[\cite{DT3}, Theorem 1.2]\label{ssrT2} Let $v$, $N$ be given natural numbers such that $v\le N$.  Let $\D_N\subset \C(\Og)$ be  a dictionary of $N$ continuous functions on $\Og$. Assume that  there exists a set $\xi:= \{\xi^j\}_{j=1}^m \subset \Omega $, which provides {\it one-sided universal discretization} with constant $C_1$ for the collection $\cX_v(\D_N)$. Then for   any  function $ f \in \C(\Omega)$ we have
\be\label{I5}
  \|f-LS(\xi,\cX_v(\D_N))(f)\|_2 \le 2^{1/2}(2C_1^{-1} +1) \sigma_v(f,\D_N)_{L_2(\Og, \mu_\xi)}
 \ee
 and
 \be\label{I6}
  \|f-LS(\xi,\cX_v(\D_N))(f)\|_2 \le  (2C_1^{-1} +1) \sigma_v(f,\D_N)_\infty,
 \ee
 where $\mu_\xi$ is the probability measure given in \eqref{5-5}.
 \end{Theorem}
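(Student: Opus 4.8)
The plan is to prove Theorem \ref{ssrT2} by combining the defining optimality property of the least-squares selection operator $LS(\xi,\cX_v(\D_N))$ with the one-sided discretization inequality, and then passing between the discrete $L_2(\mu_\xi)$ norm and the continuous $L_2(\mu)$ norm. First I would fix $f\in\C(\Og)$ and let $n^\ast:=n(\xi,f)$ be the index selected by the algorithm, so that $LS(\xi,\cX_v(\D_N))(f)=LS(\xi,X(n^\ast))(f)$ by definition \eqref{I4}. The key observation is that for \emph{any} fixed subspace $V\in\cX_v(\D_N)$, the operator $LS(\xi,V)$ minimizes the discrete defect $\|S(f-u,\xi)\|_2$ over $u\in V$, and by the optimal choice of $n^\ast$ the output beats the least-squares approximant in every $X(n)$ simultaneously. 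Thus for each $V\in\cX_v(\D_N)$ and each $g\in V$ one has the discrete bound
\[
\|S\bl(f-LS(\xi,\cX_v)(f),\xi\br)\|_2\le\|S(f-g,\xi)\|_2,
\]
which gives control in the discrete norm by the best $v$-term discrete approximation error.

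Next I would convert this discrete control into an estimate on the error $h:=f-LS(\xi,\cX_v(\D_N))(f)$ in the continuous norm $\|\cdot\|_2=\|\cdot\|_{L_2(\mu)}$. Writing $u^\ast:=LS(\xi,X(n^\ast))(f)\in X(n^\ast)$ and letting $g\in X(n^\ast)$ be a near-best $v$-term approximant, the difference $u^\ast-g$ lies in some space of the collection $\cX_v(\D_N)$ (or at worst in a $2v$-dimensional span; here I would use that $u^\ast,g$ are both $v$-sparse so their difference is $2v$-sparse, and invoke one-sided discretization on $\cX_{2v}$ if needed, though the statement is phrased for $\cX_v$, so the right move is to keep $u^\ast-g$ within the $v$-sparse regime by choosing the comparison element inside $X(n^\ast)$ itself). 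Then the one-sided discretization inequality $C_1\|w\|_2^2\le\|S(w,\xi)\|_2^2$ applied to the sparse function $w=u^\ast-g$ lets me bound $\|u^\ast-g\|_2$ by $C_1^{-1/2}\|S(u^\ast-g,\xi)\|_2$, and a triangle inequality $\|h\|_2\le\|f-g\|_2+\|g-u^\ast\|_2$ then assembles the continuous estimate from the discrete one.

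To get \eqref{I5} with the measure $\mu_\xi$ from \eqref{5-5}, I would exploit that $\mu_\xi=\tfrac12\mu+\tfrac1{2m}\sum_j\delta_{\xi^j}$ dominates both the continuous measure (up to the factor $\tfrac12$) and the discrete one. Specifically $\|w\|_{L_2(\mu_\xi)}^2=\tfrac12\|w\|_2^2+\tfrac12\|S(w,\xi)\|_2^2$, so that $\|w\|_2\le 2^{1/2}\|w\|_{L_2(\mu_\xi)}$ and $\|S(w,\xi)\|_2\le 2^{1/2}\|w\|_{L_2(\mu_\xi)}$ hold simultaneously. Choosing $g\in X(n^\ast)$ to be a near-minimizer of $\sigma_v(f,\D_N)_{L_2(\mu_\xi)}$ and tracking the constants through the triangle inequality and the factor $C_1^{-1/2}$ should produce the stated constant $2^{1/2}(2C_1^{-1}+1)$; the extra $2^{1/2}$ in front precisely accounts for the conversion $\|h\|_2\le 2^{1/2}\|h\|_{L_2(\mu_\xi)}$. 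The sup-norm estimate \eqref{I6} is then immediate: since $\sigma_v(f,\D_N)_{L_2(\mu_\xi)}\le\sigma_v(f,\D_N)_\infty$ (because $\mu_\xi$ is a probability measure and the $L_2(\mu_\xi)$ norm is dominated by the uniform norm), one drops the $2^{1/2}$ by comparing directly against the uniform best approximation.

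The main obstacle I anticipate is the bookkeeping of which collection the sparse differences belong to: one must ensure that every function to which the one-sided discretization inequality is applied is genuinely $v$-sparse (an element of $\bigcup_{V\in\cX_v}V$), since the hypothesis only provides one-sided discretization for $\cX_v(\D_N)$, not for $\cX_{2v}$. The clean way around this is to compare the algorithm's output $u^\ast$ against a near-best $v$-term approximant $g$ drawn from the \emph{same} selected space $X(n^\ast)$, so that $u^\ast-g\in X(n^\ast)\in\cX_v(\D_N)$ remains $v$-sparse; the optimality of $u^\ast$ as the least-squares projection onto $X(n^\ast)$ guarantees the discrete error does not exceed that of $g$, and the one-sided inequality is then legitimately applicable. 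Getting the constant exactly right, rather than a larger absolute constant, requires carefully choosing whether to measure the best approximation in $L_2(\mu_\xi)$ or $L_\infty$ and threading the two measure-comparison factors of $2^{1/2}$ through at the right places.
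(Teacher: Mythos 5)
Your argument has a genuine gap, and it sits exactly at the place you flagged as the ``main obstacle.'' Two things go wrong. First, your ``key observation'' is false: the index $n^\ast=n(\xi,f)$ in \eqref{I4} is chosen to minimize the \emph{continuous} error $\|f-LS(\xi,X(n))(f)\|_2$, not the discrete one, so nothing gives the inequality $\|S(f-LS(\xi,\cX_v(\D_N))(f),\xi)\|_2\le\|S(f-g,\xi)\|_2$ for $g$ lying in a space $V\neq X(n^\ast)$; the discrete least-squares optimality of $u^\ast:=LS(\xi,X(n^\ast))(f)$ holds only against competitors from $X(n^\ast)$ itself. Second, and fatally, your repair of the sparsity bookkeeping --- taking the comparison element $g$ from the \emph{selected} space $X(n^\ast)$ --- severs the connection with $\sigma_v(f,\D_N)$. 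A near-minimizer of $\sigma_v(f,\D_N)_{L_2(\Og,\mu_\xi)}$ generally does not lie in $X(n^\ast)$, so ``$g\in X(n^\ast)$ a near-minimizer of $\sigma_v$'' is not available; what your chain of inequalities actually proves is
$$
\|f-u^\ast\|_2\le \sqrt{2}\,\bl(1+2C_1^{-1/2}\br)\,\dist_{L_2(\Og,\mu_\xi)}\bl(f,X(n^\ast)\br),
$$
and this distance cannot be bounded by $\sigma_v(f,\D_N)_{L_2(\Og,\mu_\xi)}$. The space $X(n^\ast)$ is selected only because its discrete least-squares fit has small $L_2(\mu)$ error; this gives no control on $\inf_{g\in X(n^\ast)}\|S(f-g,\xi)\|_2$, since one-sided discretization bounds discrete norms from \emph{below} only, and $f$ itself is not sparse. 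Concretely, if $f$ equals an element of some other $X(n_0)$ plus a perturbation concentrated at the points $\xi^j$, the algorithm may select a space $X(n^\ast)$ that approximates $f$ well in $L_2(\mu)$ but misses the sample values entirely, making your right-hand side large while $\sigma_v(f,\D_N)_{L_2(\Og,\mu_\xi)}$ is tiny.

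The correct route (this theorem is quoted from \cite{DT3}; the paper does not reproduce the proof) is the mirror image of yours. Let $g\in X(n)$ be a near-best $v$-term approximant of $f$ (in $L_2(\Og,\mu_\xi)$ for \eqref{I5}, in $L_\infty$ for \eqref{I6}), where $X(n)$ is whatever space of the collection contains it. The definition of $n^\ast$ is used exactly once, and it is the step absent from your write-up: $\|f-u^\ast\|_2\le\|f-u_n\|_2$ with $u_n:=LS(\xi,X(n))(f)$, which transfers the whole estimate into $X(n)$. Now $g$ and $u_n$ lie in the \emph{same} space $X(n)\in\cX_v(\D_N)$, so $g-u_n$ is genuinely $v$-sparse --- your instinct that $\cX_{2v}$ is never needed is correct, but this is the mechanism, not restricting to $X(n^\ast)$ --- and one-sided discretization plus the discrete optimality of $u_n$ against $g\in X(n)$ give
$$
\|g-u_n\|_2\le C_1^{-1/2}\|S(g-u_n,\xi)\|_2\le C_1^{-1/2}\bl(\|S(g-f,\xi)\|_2+\|S(f-u_n,\xi)\|_2\br)\le 2C_1^{-1/2}\|S(f-g,\xi)\|_2.
$$
Hence $\|f-u^\ast\|_2\le\|f-g\|_2+2C_1^{-1/2}\|S(f-g,\xi)\|_2$, and your (correct) measure-splitting observations finish the proof: both $\|h\|_2$ and $\|S(h,\xi)\|_2$ are at most $\sqrt{2}\,\|h\|_{L_2(\Og,\mu_\xi)}$ and at most $\|h\|_\infty$, which yields \eqref{I5} and \eqref{I6} with constants $\sqrt{2}(1+2C_1^{-1/2})$ and $1+2C_1^{-1/2}$, at least as good as the stated ones when $C_1\le 1$.
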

 
 \begin{Remark}
 	 An advantage of the algorithm $LS(\xi,\cX_v(\D_N))$ over the WOMP is that it provides in (\ref{I5}) and (\ref{I6}) the error in the $L_2(\Omega,\mu)$ norm while  the WOMP 
 provides in (\ref{mp})  the error in the discrete $L_2(\Omega_m,\mu_m)$ norm. However, a big advantage of the WOMP over $LS(\xi,\cX_v(\D_N))$ is that it is a simple algorithm, which is known 
 for easy practical implementation, whereas  the $LS(\xi,\cX_v(\D_N))$ is only a theoretical algorithm, which has the step (\ref{I4}) that may be difficult to realize. Both the algorithms WOMP and $LS(\xi,\cX_v(\D_N))$ only use information from $f$ and provide the error close to the best (in a certain sense). They do not use the information that $f\in \bF$ and automatically provide the error bound in terms of the class $\bF$, to which $f$ belongs. This makes these algorithms {\it universal}. 
 
\end{Remark}

 In  Theorem \ref{IT2},  the  WOMP 
provides an  error in the discrete norm $L_2(\Omega_m,\mu_m)$ in the estimate (\ref{mp}). However,
a slight modification of the above  proof of Theorem \ref{IT2} also yields the following corollary, where  the error  is measured in the $L_2(\Omega,\mu)$ norm
rather than the the discrete norm $L_2(\Omega_m,\mu_m)$.

 \begin{Corollary}\label{ssrR1}
 	 Under the conditions of Theorem \ref{IT2}, we have 
 \be\label{mp3}
  \|f_{cv} \|_{L_2(\Omega,\mu)} \le C \sigma_v(f_0,\CD_N)_{L_2(\Og, \mu_\xi)},
 \ee
 where   $c=c(t,R_1,R_2)\ge 1$ is  the  constant integer given in Theorem \ref{IT2}, and  $f_k$ is
  the residue of $f_0$ after the $k$-th iteration of the algorithm $$\text{WOMP}\Bl(\D_N(\Og_m); t\Br)_{L_2(\Omega_m,\mu_m)}.$$
  \end{Corollary}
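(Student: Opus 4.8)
The plan is to mirror the derivation of (\ref{mp2}) from (\ref{mp}) given in the proof of Theorem \ref{IT2}, the only change being that the $L_\infty$ near-best approximant used there is replaced by a near-best approximant measured in the $L_2(\Og,\mu_\xi)$ norm. The whole argument hinges on the elementary observation that the measure $\mu_\xi$ defined in (\ref{5-5}) satisfies, for every function $g$,
\[
\|g\|_{L_2(\Og,\mu_\xi)}^2 = \frac12\|g\|_{L_2(\Og,\mu)}^2 + \frac12\|g\|_{L_2(\Og_m,\mu_m)}^2,
\]
so that both $\|g\|_{L_2(\Og,\mu)} \le \sqrt2\,\|g\|_{L_2(\Og,\mu_\xi)}$ and $\|g\|_{L_2(\Og_m,\mu_m)} \le \sqrt2\,\|g\|_{L_2(\Og,\mu_\xi)}$ hold simultaneously. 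These two inequalities let us pass freely between the continuous norm, the discrete norm, and the norm appearing on the right-hand side of (\ref{mp3}).

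First I would record, as a consequence of the second inequality above, that
\[
\sigma_v(f_0,\CD_N(\Og_m))_{L_2(\Og_m,\mu_m)} \le \sqrt2\,\sigma_v(f_0,\CD_N)_{L_2(\Og,\mu_\xi)},
\]
since any competitor $g\in\Sigma_v(\CD_N)$ for the right-hand quantity, restricted to $\Og_m$, is a competitor for the left-hand one and its discrete norm is at most $\sqrt2$ times its $\mu_\xi$-norm. Combined with (\ref{mp}) this gives $\|f_{cv}\|_{L_2(\Og_m,\mu_m)} \le C\sqrt2\,\sigma_v(f_0,\CD_N)_{L_2(\Og,\mu_\xi)}$. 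Next I would fix $f\in\Sigma_v(\CD_N)$ with $\|f_0-f\|_{L_2(\Og,\mu_\xi)} \le 2\sigma_v(f_0,\CD_N)_{L_2(\Og,\mu_\xi)}$ and, writing $G:=G_{cv}(f_0,\CD_N(\Og_m))$ so that $f_{cv}=f_0-G$, estimate
\[
\|f-G\|_{L_2(\Og_m,\mu_m)} \le \|f-f_0\|_{L_2(\Og_m,\mu_m)} + \|f_{cv}\|_{L_2(\Og_m,\mu_m)} \le \sqrt2(2+C)\,\sigma_v(f_0,\CD_N)_{L_2(\Og,\mu_\xi)}.
\]
Since $f\in\Sigma_v(\CD_N)$ and $G$ lies in the span of $cv$ dictionary elements, the difference $f-G$ lies in $\Sigma_u(\CD_N)$ because $(1+c)v\le u$; hence the universal discretization bound (\ref{I3}) for the collection $\cX_u(\CD_N)$ converts its discrete norm into the continuous one, giving $\|f-G\|_{L_2(\Og,\mu)} \le \sqrt2\,\|f-G\|_{L_2(\Og_m,\mu_m)}$. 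A final triangle inequality in $L_2(\Og,\mu)$, together with $\|f_0-f\|_{L_2(\Og,\mu)}\le\sqrt2\,\|f_0-f\|_{L_2(\Og,\mu_\xi)}$, then yields $\|f_{cv}\|_{L_2(\Og,\mu)} \le C'\,\sigma_v(f_0,\CD_N)_{L_2(\Og,\mu_\xi)}$ for an absolute constant $C'$.

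I do not expect a genuine obstacle here, since every estimate is already present in the proof of Theorem \ref{IT2}; the only new ingredient is the two-sided comparison furnished by the splitting of $\mu_\xi$, which is precisely what allows the right-hand side to be expressed in the $\mu_\xi$-norm rather than in $\|\cdot\|_\infty$. The one point requiring a little care is to choose the near-best approximant $f$ with respect to $\|\cdot\|_{L_2(\Og,\mu_\xi)}$ (and not $\|\cdot\|_\infty$), so that all three occurrences of $\sigma_v$ — discrete, continuous, and the residual error — are simultaneously controlled by the single quantity $\sigma_v(f_0,\CD_N)_{L_2(\Og,\mu_\xi)}$.
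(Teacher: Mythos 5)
Your proposal is correct and follows essentially the same route as the paper's own proof: both hinge on the splitting $\mu_\xi=\tfrac12(\mu+\mu_m)$ to pass between the three norms, take a near-best approximant from $\Sigma_v(\CD_N)$ in the $L_2(\Og,\mu_\xi)$ norm, use that the resulting sparse difference lies in $\Sigma_u(\CD_N)$ so that the universal discretization (\ref{I3}) applies, and invoke (\ref{mp}) to control $\|f_{cv}\|_{L_2(\Omega_m,\mu_m)}$. The only difference is a cosmetic reordering of the triangle inequalities (you work first in $L_2(\Og,\mu)$, the paper passes first to $L_2(\Og,\mu_\xi)$), which changes nothing of substance.
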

 \begin{proof}
 	Recall that $$\Og_m=\{\xi^1,\cdots, \xi^m\},\   \  \mu_m= \frac1{m}\sum_{j=1}^m \delta_{\xi^j}\  \  \ \text{and}\  \ \mu_\xi = \f{\mu+\mu_m}2.$$
 	For convenience, we will use the notation $\|\cdot\|_{L_2(\nu)}$  to denote the norm of $L_2$ defined with respect to a measure $\nu$ on $\Og$. 
 	 	Let $g\in \Sigma_v(\CD_N)$ be such that  $\|f_0-g\|_{L_2(\mu_\xi)} \le 2 \sigma_v(f_0,\CD_N)_{L_2(\mu_\xi)}$.
 	Then \begin{align*}
 	  \|f_{cv} \|_{L_2(\mu)}&\leq
 	   \sqrt{2} \|f_0 - G_{cv}(f_0,\CD_N(\Omega_m))\|_{L_2(\mu_\xi)}\\
 	   &\leq \sqrt{2}\|f_0-g\|_{L_2(\mu_\xi)}+\sqrt{2} \|g- G_{cv}(f_0,\CD_N(\Omega_m))\|_{L_2(\mu_\xi)}\\
 	   &\leq 2\sqrt{2} \sigma_v(f_0,\CD_N)_{L_2(\mu_\xi)}+ \sqrt{2} \|g- G_{cv}(f_0,\CD_N(\Omega_m))\|_{L_2(\mu_\xi)}.
 	\end{align*}
 	Since
 	\[ g- G_{cv}(f_0,\CD_N(\Omega_m))\in \Sigma_{v+cv} (\CD_N) \subset \Sigma_u(\CD_N),\]
 	it follows by the universal discretization that 
 	\begin{align*} \|g-& G_{cv}(f_0,\CD_N(\Omega_m))\|_{L_2(\mu_\xi)}\leq C  \|g- G_{cv}(f_0,\CD_N(\Omega_m))\|_{L_2(\mu_m)}\\
 &	\leq C \|f_0-g\|_{L_2(\mu_m)}+C  \|f_0- G_{cv}(f_0,\CD_N(\Omega_m))\|_{L_2(\mu_m)}\\
 &\leq C \|f_0-g\|_{L_2(\mu_\xi)}+ C \|f_{cv} \|_{L_2(\mu_m)},\end{align*}
 	which, using Theorem \ref{IT2}, is estimated by 
 	\begin{align*}
 	&\leq C  \sigma_v(f_0,\CD_N)_{L_2(\mu_\xi)}+ C\sigma_v(f_0,\CD_N(\Omega_m))_{L_2(\mu_m)}\leq C  \sigma_v(f_0,\CD_N)_{L_2(\mu_\xi)}.
 	\end{align*}

\end{proof}

 Finally,  we discuss applications of the  universal discretization for the trigonometric system in  sampling recovery of periodic functions. 
 Let $M\in \N$ and $d\in \N$. Let $\Pi(M) := [-M,M]^d$ denote the $d$-dimensional cube. Consider the system $$\Tr(M,d) :=\Bl\{ e^{i(\bk,\bx)}: 
 \bk \in \Pi(M)\cap\ZZ^d\Br\}$$
 of trigonometric  function on $\T^d =[0,2\pi)^d$. Then $\Tr(M,d)$ is an orthonormal system in $L_2(\T^d,\mu)$ with $\mu$ being the normalized Lebesgue measure on $\T^d$. The cardinality of this system is $N(M):= |\Tr(M,d)| = (2M+1)^d$. In our further applications we are interested in bounds on $m(\cX_v(\Tr(M,d))$ in a special case when $M\le v^c$ with some constant $c$ which may depend on $d$. Here we recall that  the notation  $m(\cX)$  is defined in the introduction  as    the minimal number of points  required for  the universal discretization (\ref{I3})  for a  collection $\cX$ of finite-dimensional  linear subspaces.
 Theorem \ref{IT1} 
 gives 
 \be\label{DT1}
 m(\cX_v(\Tr(v^c,d)) \le C(c,d) v (\log (2v))^4.
 \ee
 This  can also be deuced from (\ref{FR1}) in Theorem \ref{FR} on the 
  RIP properties.  However, it is known that in the case of system $\Tr(M,d)$ the bound (\ref{FR1}) in Theorem \ref{FR} can be improved, which in turn can be used  to improve the bound (\ref{DT1}). To be more precise, combining   results of \cite{HR} and \cite{Bour}, and using the  argument in the beginning of this section, we get 
 \be\label{HR1}
 m(\cX_v(\Tr(v^c,d)) \le C(c,d) v (\log (2v))^3.
 \ee
 
 We now apply  Corollary \ref{ssrR1} to optimal sampling recovery of periodic functions. This discussion complements the one from \cite[Section 5]{DT3}.
 Given a positive integer $N$, let 
 $$
 \Gamma(N) := \bigl\{ \mathbf k\in\mathbb Z^d :\prod_{j=1}^d
 \max\bigl( |k_j|,1\bigr) \le N\bigr\}\quad\text{--}\quad\text{a hyperbolic cross}.
 $$
 Given a finite subset  $Q\subset \mathbb Z^d$, define 
 $$
 \Tr(Q) :=\left\{ t : t(\mathbf x) =\sum_{\mathbf k\in Q}c_{\mathbf k}
 e^{i(\mathbf k,\mathbf x)},\  \ c_{\mathbf k} \in\CC \right\} .
 $$

  For a vector  $\mathbf s=(s_1,\dots,s_d )$  whose  coordinates  are
nonnegative integers, we define
$$
\rho(\mathbf s) := \bigl\{ \mathbf k\in\mathbb Z^d:  \floor*{ 2^{s_j-1}} \le
|k_j| < 2^{s_j},\qquad j=1,\dots,d \bigr\},
$$
and define, for $f\in L_1 (\T^d)$, 
$$
\delta_{\mathbf s} (f,\mathbf x) :=\sum_{\mathbf k\in\rho(\mathbf s)}
\hat f(\mathbf k)e^{i(\mathbf k,\mathbf x)},\quad \hat f(\mathbf k) := (2\pi)^{-d}\int_{\T^d} f(\bx)e^{-i(\mathbf k,\mathbf x)}d\bx.
$$
We also define  for $f\in L_1(\T^d)$
$$
f_j:=\sum_{\|\bs\|_1=j}\delta_\bs(f), \quad j=0,1,\cdots.
$$
The  Wiener norm (the $A$-norm or the $\ell_1$-norm) of $f\in L_1(\T^d)$ is defined as 
$$
\|f\|_A := \sum_{\mathbf k\in\ZZ^d}|\hat f({\mathbf k})|.
$$
The following classes, which are convenient in studying sparse approximation with respect to the trigonometric system, 
were  introduced and studied in \cite{VT150}. 
For parameters $ a\in \R_+$ and $ b\in \R$,  define 
$$
\bW^{a,b}_A:=\Bl\{f: \|f_j\|_A \le 2^{-aj}(j+1)^{(d-1)b},\quad  j=0,1,\cdots\Br\}.
$$
The following result was presented in  \cite{DT3} without proof. 

\begin{Lemma}\label{ssrL1}   There exist two constants $c(a,d)$ and $C(a,b,d)$ such that for any $\xi\in (\T^d)^m$ and $v\in\N$ there is a constructive method $A_{v,\xi}$ based on greedy algorithms, which provides a $v$-term approximant 
from $\Tr(\Gamma(M))$, $|\Gamma(M)|\le v^{c(a,d)}$, with 
the bound for $f\in \bW^{a,b}_A$
$$
\|f-A_{v,\xi}(f)\|_{L_2(\T^d,\mu_\xi)} \le C(a,b,d)  v^{-a-1/2} (\log v)^{(d-1)(a+b)}.      
$$
\end{Lemma}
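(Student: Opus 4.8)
The plan is to combine the constructive sampling-recovery guarantee of Corollary~\ref{ssrR1} with the classical greedy ($v$-term) approximation rate for the class $\bW^{a,b}_A$.

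\textbf{Setup and reduction.} I would take the dictionary $\CD_N=\Tr(\Gamma(M))$, with $M$ chosen so that $N=|\Gamma(M)|\le v^{c(a,d)}$ for a constant $c(a,d)$ fixed large at the end; then $L:=\max\{\|\bs\|_1:\rho(\bs)\subset\Gamma(M)\}\asymp\log M\asymp\log v$. Since $\Tr(\Gamma(M))$ is a uniformly bounded orthonormal system in $L_2(\T^d,\mu)$, it is a uniformly bounded Riesz basis with $R_1=R_2=1$, so Theorem~\ref{IT2} and Corollary~\ref{ssrR1} are available. Assuming $\xi$ provides universal discretization for $\Tr(\Gamma(M))$ (e.g.\ for i.i.d.\ random $\xi$ with $m\asymp N(\log N)^c$ this holds with high probability by Theorem~\ref{IT1}, or by \eqref{FR1}), I define $A_{v,\xi}(f):=G_v(f,\CD_N(\Og_m))$ produced by $\text{WOMP}\bl(\CD_N(\Og_m);t\br)_{L_2(\Og_m,\mu_m)}$; this is a $v$-term trigonometric polynomial computed only from the samples $f(\xi^\nu)$. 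Following the proof of Corollary~\ref{ssrR1} and additionally using the discrete bound \eqref{mp} of Theorem~\ref{IT2}, I obtain
\[
\|f-A_{v,\xi}(f)\|_{L_2(\T^d,\mu_\xi)}\le C\,\sigma_{v'}(f,\Tr(\Gamma(M)))_{L_2(\T^d,\mu_\xi)},\qquad v'=\floor*{v/(1+c)}\asymp v .
\]
It remains to bound this best $v'$-term $\mu_\xi$-error.

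\textbf{Approximation estimate.} For $f\in\bW^{a,b}_A$ I would exhibit one $v'$-term polynomial $h$. First discard all frequencies outside $\Gamma(M)$: since $\|f_j\|_A\le 2^{-aj}(j+1)^{(d-1)b}$, the level-tail beyond $L$ is geometric, and taking $c(a,d)$ large makes $\|f-S_{\Gamma(M)}f\|_\infty\le v^{-a-1/2}$, negligible in every norm. Inside $\Gamma(M)$ I allocate the $v'$ retained terms over the dyadic blocks $\rho(\bs)$, $\|\bs\|_1=j$, keeping in block $\rho(\bs)$ the largest Fourier coefficients; a Stechkin-type balancing against the constraints $\|f_j\|_A\le 2^{-aj}(j+1)^{(d-1)b}$ yields
\[
\|f-h\|_{L_2(\T^d,\mu)}\le C(a,b,d)\,v^{-a-1/2}(\log v)^{(d-1)(a+b)} .
\]
This is the known greedy $v$-term rate for $\bW^{a,b}_A$, which I would quote from \cite{VT150} and the greedy-approximation machinery rather than recompute.

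\textbf{Transfer to $\mu_\xi$ (main obstacle).} Since $\sigma_{v'}(f)_{\mu_\xi}\le\|f-h\|_{\mu_\xi}$ and $\|f-h\|_{\mu_\xi}^2=\tfrac12\|f-h\|_2^2+\tfrac12\|f-h\|_{\mu_m}^2$, the whole difficulty is the \emph{discrete} norm $\|f-h\|_{L_2(\Og_m,\mu_m)}$ of the greedy residual. The naive bound $\|f-h\|_{\mu_m}\le\|f-h\|_\infty$ is useless: the residual is \emph{not} sparse, its sup-norm is only of order $v^{-a}$, and using it would wipe out exactly the extra factor $v^{-1/2}$ that separates greedy from plain hyperbolic-cross approximation. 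Instead I split $f-h=(f-S_{\Gamma(M)}f)+(S_{\Gamma(M)}f-h)$: the first term is tiny in $L_\infty$ by the choice of $M$, while $S_{\Gamma(M)}f-h\in\Tr(\Gamma(M))$, so its discrete and continuous $L_2$ norms are comparable through the discretization inequality satisfied by $\xi$ on the whole polynomial space $\Tr(\Gamma(M))$. This gives $\|S_{\Gamma(M)}f-h\|_{\mu_m}\le C\|S_{\Gamma(M)}f-h\|_2\le Cv^{-a-1/2}(\log v)^{(d-1)(a+b)}$, hence the same bound for $\|f-h\|_{\mu_\xi}$ and for $\sigma_{v'}(f)_{\mu_\xi}$; combined with the reduction step this proves the lemma. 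The delicate point — and the reason the statement is nontrivial — is precisely that the discretization must be strong enough to control the non-sparse residual $S_{\Gamma(M)}f-h$, not merely the sparse approximants themselves, and keeping this compatible with as few sample points as possible is where the universal-discretization results of the paper enter.
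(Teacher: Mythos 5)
Your proof has a genuine gap in the ``transfer to $\mu_\xi$'' step, and moreover it proves a weaker statement than the lemma. The lemma asserts the bound for \emph{every} $\xi\in(\T^d)^m$, with no hypothesis on $\xi$ and no lower bound on $m$; you assume $\xi$ provides universal discretization for $\cX_u(\Tr(\Gamma(M)))$, and your transfer step needs far more than that: a one-sided Marcinkiewicz inequality $\|g\|_{L_2(\Og_m,\mu_m)}\le C\|g\|_{L_2(\T^d,\mu)}$ valid for \emph{all} $g$ in the full $N$-dimensional space $\Tr(\Gamma(M))$, $N=|\Gamma(M)|\le v^{c(a,d)}$, since the residual $S_{\Gamma(M)}f-h$ is not sparse (as you yourself note). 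Universal discretization for the collection $\cX_u(\CD_N)$ says nothing about such $g$, and full-space discretization \emph{cannot} hold when $m\ll N$: for any points $\xi^1,\dots,\xi^m$, the polynomial $g(\bx)=\sum_{\bk\in\Gamma(M)}e^{i(\bk,\bx-\xi^1)}$ lies in $\Tr(\Gamma(M))$ and satisfies
\[
\|g\|^2_{L_2(\Og_m,\mu_m)}\ge \frac{|g(\xi^1)|^2}{m}=\frac{N^2}{m},
\qquad
\|g\|^2_{L_2(\T^d,\mu)}=N,
\]
so the ratio of the discrete to the continuous norm is at least $N/m$ and blows up. Hence your argument can only work when $m\gtrsim N\asymp v^{c(a,d)}$, which both contradicts the arbitrary-$\xi$ form of the lemma and ruins its intended use in Theorem \ref{ssrT3}, where $m\asymp v(\log (2v))^3\ll|\Gamma(M)|$. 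The reduction step via Theorem \ref{IT2} and Corollary \ref{ssrR1} imports the same unwarranted hypotheses (and is also backwards logically: in the paper, the lemma is an input \emph{to} Corollary \ref{ssrR1}, supplying the bound on $\sigma_v(f,\CD_N)_{L_2(\T^d,\mu_\xi)}$, not a consequence of it).

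The paper's proof removes your ``main obstacle'' rather than confronting it: the greedy approximation is carried out \emph{directly} in the Hilbert space $L_2(\T^d,\mu_\xi)$, so no transfer between norms is ever needed. Since $\mu_\xi$ is a probability measure and $|e^{i(\bk,\bx)}|\le 1$, the trigonometric system remains a dictionary bounded by $1$ in $L_2(\T^d,\mu_\xi)$; the classical Hilbert-space OMP bound (Theorem 2.19 in \cite[p.~93]{VTbook}) then produces, for each dyadic block $f_j$, a $v_j$-term approximant $h_j$ with
\[
\|f_j-h_j\|_{L_2(\T^d,\mu_\xi)}\le \bigl(\max(v_j,1)\bigr)^{-1/2}\,\|f_j\|_A
\le \bigl(\max(v_j,1)\bigr)^{-1/2}\,2^{-aj}(j+1)^{(d-1)b},
\]
an estimate already in the target norm and valid for \emph{every} $\xi$. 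Taking as approximant the full sum of the blocks $\delta_\bs(f)$ with $\|\bs\|_1<n$ plus $\sum_{j\ge n}h_j$, with the allocation $v_j=[2^{n-\beta(j-n)}j^{d-1}]$ and $\beta=a/2$, gives $\asymp 2^nn^{d-1}$ terms, frequencies inside $\Gamma(M)$ with $|\Gamma(M)|\le v^{c(a,d)}$, and total error $\le C\,2^{-n(a+1/2)}n^{(d-1)(b-1/2)}$, which is exactly the claimed bound for $v\asymp 2^nn^{d-1}$. So the fix for your write-up is structural: your Step 2 should construct the competitor for $\sigma_{v}(f,\CD_N)_{L_2(\T^d,\mu_\xi)}$ by running OMP in $L_2(\T^d,\mu_\xi)$ from the start, instead of best-approximating in $L_2(\T^d,\mu)$ and then trying to discretize the non-sparse residual; once this is done, all discretization hypotheses on $\xi$ can be dropped.
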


 For completeness we 
will give   a proof of a somewhat more general statement at the end of this section.

    Let $v\in\N$ be given and let $M$ be from Lemma \ref{ssrL1}. Consider the orthonormal 
basis $\CD_N:=\{e^{i(\bk,\bx)}\}_{\bk\in \Gamma(M)}$ of the space $\Tr(\Gamma(M))$.
Then Lemma \ref{ssrL1},  Corollary  \ref{ssrR1} and the bound (\ref{HR1}) imply the following   Theorem \ref{ssrT3}.

\begin{Theorem}\label{ssrT3}   There exist two constants $c'(a,d)$ and $C'(a,b,d)$ such that       for any $v\in\N$ we have that  for $a>0$, $b\in \R$,
\begin{equation}\label{ssr10}
 \varrho_{m}^{o}(\bW^{a,b}_A,L_2(\T^d)) \le C'(a,b,d)  v^{-a-1/2} (\log v)^{(d-1)(a+b)}      
\end{equation}
provided that $m$ is an integer satisfying 
$$
m\ge c'(a,d) v(\log(2v))^3.
$$
\end{Theorem}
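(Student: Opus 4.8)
The plan is to assemble Theorem \ref{ssrT3} as a direct consequence of the three ingredients already in hand: the approximation guarantee of Lemma \ref{ssrL1}, the transfer of a discrete-norm error bound into an $L_2(\T^d,\mu)$ error bound provided by Corollary \ref{ssrR1}, and the sampling count from the sharpened discretization bound \eqref{HR1}. The overall strategy is to fix $f\in \bW^{a,b}_A$, run the WOMP-based recovery on a set of random points that (with high probability) provides universal discretization for $\cX_u(\CD_N)$ with $u$ comparable to $v$, and then chain the inequalities. Since the target estimate \eqref{ssr10} is stated for the optimal recovery quantity $\varrho_m^o$, which is an infimum over all sampling schemes and all reconstruction maps, it suffices to exhibit \emph{one} scheme meeting the bound.

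First I would set $N:=|\Gamma(M)|$ with $M$ chosen as in Lemma \ref{ssrL1}, so that $N\le v^{c(a,d)}$, and take $\CD_N=\{e^{i(\bk,\bx)}\}_{\bk\in\Gamma(M)}$. This is a uniformly bounded orthonormal system, hence a uniformly bounded Riesz basis with $R_1=R_2=1$, so the hypotheses of Theorem \ref{IT2} and Corollary \ref{ssrR1} are met. Next I would fix the integer $u:=(1+c)v$ where $c=c(t,R_1,R_2)$ is the constant from Theorem \ref{IT2}, and invoke the bound \eqref{HR1}: for the trigonometric system on a hyperbolic-cross index set contained in $\Pi(v^{c(a,d)},d)$ one has $m(\cX_u(\Tr(\cdot))) \le C(a,d)\,u(\log(2u))^3 \le c'(a,d)\,v(\log(2v))^3$. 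Thus, as soon as $m\ge c'(a,d)v(\log(2v))^3$, there exists a set $\Og_m=\{\xi^1,\dots,\xi^m\}$ providing universal discretization for $\cX_u(\CD_N)$; fix such a set.

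Now I would apply Corollary \ref{ssrR1} with $f_0=f$ and this $\Og_m$. The WOMP run on $\CD_N(\Og_m)$ in $L_2(\Og_m,\mu_m)$ produces a residue $f_{cv}$ satisfying
\[
\|f_{cv}\|_{L_2(\T^d,\mu)} \le C\,\sigma_v(f,\CD_N)_{L_2(\T^d,\mu_\xi)}.
\]
The recovery operator $A_{v,\xi}$ of Lemma \ref{ssrL1} is precisely the greedy construction implicit here, so $f - A_{v,\xi}(f) = f_{cv}$ and the left side equals $\|f-A_{v,\xi}(f)\|_{L_2(\T^d,\mu)}$. It then remains to bound the right-hand side. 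By definition $\sigma_v(f,\CD_N)_{L_2(\mu_\xi)}\le \|f - A_{v,\xi}(f)\|_{L_2(\mu_\xi)}$ for the $v$-term approximant supplied by the lemma, and Lemma \ref{ssrL1} gives exactly
\[
\|f-A_{v,\xi}(f)\|_{L_2(\T^d,\mu_\xi)} \le C(a,b,d)\,v^{-a-1/2}(\log v)^{(d-1)(a+b)}
\]
uniformly over $f\in\bW^{a,b}_A$. Combining the two displays and taking the supremum over $f\in\bW^{a,b}_A$ and the infimum over schemes yields \eqref{ssr10} with a new constant $C'(a,b,d)$, completing the proof.

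I expect the only genuinely delicate point to be the bookkeeping between the three different measures $\mu$, $\mu_m$, and $\mu_\xi=\tfrac12(\mu+\mu_m)$, and making sure the sparse approximant produced by Lemma \ref{ssrL1} is the \emph{same} object controlled by Corollary \ref{ssrR1}; once one checks that the greedy method $A_{v,\xi}$ of the lemma coincides with (or is dominated by) the WOMP residue measured against $\mu_\xi$, the estimates chain together mechanically. The sampling-count verification is routine given \eqref{HR1}, and the whole argument is a short composition rather than a fresh calculation, so no substantial new obstacle should arise beyond confirming the index-set cardinality $|\Gamma(M)|\le v^{c(a,d)}$ is compatible with the exponent appearing in \eqref{HR1}.
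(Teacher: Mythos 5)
Your assembly is essentially the paper's own proof: the paper derives Theorem \ref{ssrT3} in one line by combining Lemma \ref{ssrL1}, Corollary \ref{ssrR1}, and the bound \eqref{HR1}, exactly the three ingredients you chain together, and your handling of the sampling count and of the three measures $\mu$, $\mu_m$, $\mu_\xi$ matches the intended argument.

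One claim in your write-up is false, though fortunately dispensable: the identification $f-A_{v,\xi}(f)=f_{cv}$. The operator $A_{v,\xi}$ of Lemma \ref{ssrL1} is \emph{not} the WOMP run on $\CD_N(\Og_m)$; it is a different greedy construction (block-wise Orthogonal Matching Pursuit in $L_2(\T^d,\mu_\xi)$ applied to the dyadic pieces $f_j$ of $f$), and it requires full knowledge of the coefficients of $f$, not merely the sample values $f(\xi^1),\dots,f(\xi^m)$ --- so it could not even serve as a legitimate recovery map for $\varrho_m^o$. Its only role is the one you also (correctly) assign to it a sentence later: it is a feasible competitor in the infimum defining $\sigma_v(f,\CD_N)_{L_2(\T^d,\mu_\xi)}$, which certifies $\sigma_v(f,\CD_N)_{L_2(\T^d,\mu_\xi)}\le \|f-A_{v,\xi}(f)\|_{L_2(\T^d,\mu_\xi)}\le C(a,b,d)v^{-a-1/2}(\log v)^{(d-1)(a+b)}$ uniformly on $\bW^{a,b}_A$. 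The actual recovery map exhibited for $\varrho_m^o$ is the WOMP approximant $G_{cv}(f,\CD_N(\Og_m))$, which does depend only on the sample values, and whose error in $L_2(\T^d,\mu)$ is $\|f_{cv}\|_{L_2(\T^d,\mu)}\le C\sigma_v(f,\CD_N)_{L_2(\T^d,\mu_\xi)}$ by Corollary \ref{ssrR1}. Deleting the sentence asserting the coincidence of the two algorithms leaves a correct proof identical in substance to the paper's.
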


Theorem \ref{ssrT3} gives the following bound for the classical classes $\bW^r_p$ of functions with bounded mixed derivative (see \cite{VTbookMA}, p.130, for their definition and \cite{DT3} for some recovery results). 

\begin{Corollary}\label{ssrT4}   There exist two constants $c'(r,d,p)$ and $C'(r,d,p)$ such that       for any $v\in\N$ we have that for $r>1/p$ and  $1<p< 2$,
\begin{equation}\label{ssr11}
 \varrho_{m}^{o}(\bW^{r}_p,L_2(\T^d)) \le C'(r,d,p)  v^{-r+1/p-1/2} (\log v)^{(d-1)(r+1-2/p)}      
\end{equation}
provided that
$$
m\ge c'(r,d,p) v(\log(2v))^3.
$$
\end{Corollary}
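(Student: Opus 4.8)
The plan is to deduce Corollary \ref{ssrT4} from Theorem \ref{ssrT3} by means of a continuous embedding of the classical mixed-smoothness class into the $A$-classes,
\[
\bW^r_p \hookrightarrow C(r,p,d)\,\bW^{a,b}_A,\qquad a=r-\tfrac1p,\quad b=1-\tfrac1p,
\]
valid for $1<p<2$ and $r>1/p$ (so that $a>0$). Granting this, the homogeneity and monotonicity of $\varrho^o_m(\cdot,L_2)$ under inclusion of classes reduce the estimate to Theorem \ref{ssrT3} with these values of $a,b$: the rate becomes $v^{-a-1/2}=v^{-r+1/p-1/2}$ and the logarithmic power becomes $(d-1)(a+b)=(d-1)(r+1-2/p)$, matching \eqref{ssr11} exactly, while the admissibility condition $m\ge c'(a,d)\,v(\log(2v))^3$ turns into $m\ge c'(r,d,p)\,v(\log(2v))^3$. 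I would note in passing that $r>1/p$ also guarantees $\bW^r_p\subset \C(\T^d)$, so that $\varrho^o_m$ is well defined on the class.

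The real content is therefore the embedding, which I would establish by passing to dyadic blocks. Writing $f_j=\sum_{\|\bs\|_1=j}\delta_\bs(f)$ as in Lemma \ref{ssrL1}, one must verify the defining inequality $\|f_j\|_A\le C\,2^{-aj}(j+1)^{(d-1)b}$. The spectrum of $f_j$ lies in the hyperbolic layer $S_j=\bigcup_{\|\bs\|_1=j}\rho(\bs)$, of cardinality $|S_j|\asymp (j+1)^{d-1}2^{j}$, which splits into $\asymp(j+1)^{d-1}$ blocks $\rho(\bs)$ of size $\asymp 2^{j}$ each. The two ingredients are: (i) a Nikolskii-type passage from the $A$-norm to $L_p$ on a single block, $\|g\|_A\le C\,2^{j/p}\|g\|_p$ for $\supp\widehat g\subset\rho(\bs)$, obtained from Hausdorff--Young together with H\"older; and (ii) the Littlewood--Paley bound $\bigl\|(\sum_{\|\bs\|_1=j}|\delta_\bs(f)|^2)^{1/2}\bigr\|_p\le C\,2^{-rj}$, which is the restriction to the layer $\|\bs\|_1=j$ of the Littlewood--Paley characterization of $\bW^r_p$ for $1<p<\infty$.

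The delicate point, and the main obstacle, is the exponent $b=1-1/p$ of the logarithm. Combining (i) and (ii) through Cauchy--Schwarz over the $\asymp(j+1)^{d-1}$ blocks yields only $b=\tfrac12$, which is too large for $p<2$; and the block-wise $\ell_p$-summation that would give the correct power is genuinely false, since $\bW^r_p$ is the dominating–mixed–smoothness Triebel--Lizorkin space $F^r_{p,2}$, strictly larger than the Besov space $B^r_{p,p}$ when $p<2$. I would therefore obtain the sharp power by interpolation between the two endpoints $p=2$ and $p=1$: at $p=2$ the exact $L_2$ Littlewood--Paley identity together with Cauchy--Schwarz over $S_j$ gives $\|f_j\|_A\le C(j+1)^{(d-1)/2}2^{-(r-1/2)j}$ (i.e.\ $b=\tfrac12$), while the corresponding layer estimate at the $p=1$ end gives $b=0$; since $b=1-1/p$ is precisely the linear interpolant in the variable $1/p$ between these endpoints, a complex-interpolation argument applied to the embedding operator produces the bound with $b=1-1/p$ for all intermediate $1<p<2$. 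This embedding, with exactly these sharp parameters, is the one isolated in \cite{VT150}, so a legitimate shortcut is simply to quote it and then carry out the routine substitution into Theorem \ref{ssrT3} described in the first paragraph.
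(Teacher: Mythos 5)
Your first paragraph coincides with the paper's own (very short) proof: Corollary \ref{ssrT4} is stated there as an immediate consequence of Theorem \ref{ssrT3}, the only ingredient beyond that theorem being the embedding $\bW^r_p\subset C(r,p,d)\,\bW^{a,b}_A$ with $a=r-1/p$, $b=1-1/p$, which the paper takes from \cite{VT150} without reproving it; the substitution of parameters and the remark that $r>1/p$ makes the class consist of continuous functions are exactly as you describe. So your ``legitimate shortcut'' (quote \cite{VT150}, then substitute into Theorem \ref{ssrT3}) is precisely the paper's argument, and that part of your proposal is correct.

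The genuine gap is in what you present as the real content, namely your interpolation proof of the embedding: its $p=1$ endpoint is false. You claim that at $p=1$ the layer estimate holds with $b=0$, i.e. $\|f_j\|_A\lesssim 2^{-j(r-1)}$ for $f\in\bW^r_1$. But take $f$ with $\hat f(\bk)=\prod_{i=1}^d\max(|k_i|,1)^{-r}$ (the Bernoulli kernel generating the class, which belongs to $\bW^r_1$ up to an absolute constant, being the $L_1$-limit of its convolutions with Fej\'er kernels of unit $L_1$-norm). On the layer $\bigcup_{\|\bs\|_1=j}\rho(\bs)$ there are $\asymp 2^j j^{d-1}$ frequencies, and each coefficient there is $\asymp 2^{-rj}$, so
\begin{equation*}
\|f_j\|_A \asymp j^{d-1}\,2^{-j(r-1)} ,
\end{equation*}
i.e. the correct endpoint exponent at $p=1$ is $b=1$, not $b=0$. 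With the true endpoints ($b=1/2$ at $p=2$, $b=1$ at $p=1$) linear interpolation in $1/p$ would give $b=1/p$, which is \emph{worse} than the trivial $b=1/2$ you already obtained; so no interpolation scheme between these two endpoints can yield $b=1-1/p$. (There is a second, unaddressed issue: even with valid endpoint bounds one would still have to show that $\bW^r_p$ and the $\ell_\infty$-block-type classes $\bW^{a,b}_A$ form compatible complex-interpolation scales, which is not automatic.) The sharp exponent $b=1-1/p$ genuinely uses the structure of the hyperbolic layer: by duality it amounts to the bound $\sup_{|\epsilon_\bk|\le 1}\bigl\|\sum_{\bk}\epsilon_\bk e^{i(\bk,\bx)}\bigr\|_{p'}\lesssim 2^{j/p}j^{(d-1)(1-1/p)}$ for polynomials with spectrum in the layer, a cancellation effect invisible to the $L_1$--$L_2$ endpoints. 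So your write-up is acceptable only in the version that cites \cite{VT150}; the self-contained argument you sketch would not survive.
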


The authors of \cite{JUV} (see Corollary 4.16 in v2) proved the following interesting bound for $1<p<2$, $r>1/p$ and  $m\ge c(r,d,p) v(\log(2v))^3$,
\begin{equation}\label{D2}
\varrho_{m}^{o}(\bW^{r}_p,L_2(\T^d)) \le C(r,d,p)  v^{-r+1/p-1/2} (\log v)^{(d-1)(r+1-2/p)+1/2}      
\end{equation}
The bound (\ref{ssr11}) is better than (\ref{D2}) by the $(\log v)^{-1/2}$ factor. The proof of (\ref{D2}) in \cite{JUV} is based on known results from \cite{VT150} on sparse trigonometric approximation in the uniform norm. We proved (\ref{ssr11}) by using inequality (\ref{mp3}) and estimating the sparse trigonometric approximation in the norm $L_2(\T^d,\mu_\xi)$, which is weaker than the uniform norm.

We conclude this section with a proof of Lemma \ref{ssrL1}. Indeed, we shall prove a somewhat more general result, where the trigonometric system is replaced by a more general system $\D=\{g_\bk\}_{\bk\in \Z^d}$  uniformly bounded functions on a domain $\Og$:
\be\label{ssr14}
|g_\bk(\bx)| \le 1,\quad \bx \in \Omega, \quad \bk \in \Z^d.
\ee
As in the trigonometric case, for 
\be\label{ssr12}
f=\sum_{\bk\in \ZZ^d}a_\bk g_\bk\   \   \text{ with }\  \ \sum_{\bk\in \ZZ^d}|a_{\bk}|<\infty,
\ee
 we define 
$$
\delta_\bs(f,\D):= \sum_{\bk\in \rho(\bs)}a_\bk g_\bk,\  \   f_j:=\sum_{\|\bs\|_1=j}\delta_\bs(f,\D), j=0,1,\cdots.
$$
and
$$
\|f\|_{A(\D)} := \sum_{\bk\in\ZZ^d}|a_\bk|.
$$
For parameters $ a\in \R_+$, $ b\in \R$ define  $\bW^{a,b}_A(\D)$  to be the class  of all functions $f$ with  a representation (\ref{ssr12}) satisfying
\be\label{ssr15}
 \|f_j\|_{A(\D)} \le 2^{-aj}(\bar j)^{(d-1)b},\quad j=0,1,\cdots,
\ee
where $\bar{j}=\max\{j,1\}$.
Given  a finite set $Q$ of points in $\mathbb Z^d$, we denote
$$
\D(Q) :=\left\{ \sum_{\mathbf k\in Q}a_{\mathbf k}
g_\bk:\  \  a_{\bk}\in\CC,\  \ \bk\in Q\right\} .
$$

\begin{Lemma}\label{ssrL2}   There exist two constants $c(a,d)$ and $C(a,b,d)$ such that for any $\xi\in \Omega^m$ and $v\in\N$ there is a constructive method $A_{v,\xi}$ based on greedy algorithms, which provides a $v$-term approximant 
from $\D(\Gamma(M))$, $|\Gamma(M)|\le v^{c(a,d)}$, with 
the bound for $f\in \bW^{a,b}_A(\D)$
$$
\|f-A_{v,\xi}(f)\|_{L_2(\Omega,\mu_\xi)} \le C(a,b,d)  v^{-a-1/2} (\log v)^{(d-1)(a+b)}.      
$$
\end{Lemma}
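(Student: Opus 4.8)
The plan is to build $A_{v,\xi}(f)$ blockwise from the dyadic decomposition $f=\sum_{j\ge 0}f_j$, $f_j=\sum_{\|\bs\|_1=j}\delta_\bs(f,\D)$, exploiting only two structural facts: that $L_2(\Og,\mu_\xi)$ is a Hilbert space, hence $2$-smooth with $\rho(u)\le u^2/2$ as in \eqref{X5-0}, and that since $\mu_\xi$ is a probability measure and $|g_\bk|\le 1$ pointwise by \eqref{ssr14}, every $g_\bk$ satisfies $\|g_\bk\|_{L_2(\mu_\xi)}\le 1$. Thus $\D$ is a dictionary bounded by $1$ in $L_2(\Og,\mu_\xi)$, and the $A(\D)$-norm dominates the $L_2(\mu_\xi)$-norm. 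Notably, the argument will use nothing about $\mu_\xi$ beyond its being a probability measure, and \emph{no orthogonality} of $\D$; this is what lets the trigonometric Lemma \ref{ssrL1} extend to the general system.

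First I would truncate: fix $n\asymp\log_2 v$ (large enough, to be pinned down) and discard the tail $\sum_{j>n}f_j$, whose norm is bounded via the geometric sum $\sum_{j>n}\|f_j\|_{A(\D)}\le\sum_{j>n}2^{-aj}\bar j^{(d-1)b}\le C(a,b,d)\,2^{-an}\bar n^{(d-1)b}$ by \eqref{ssr15}. Every retained frequency lies in a block $\rho(\bs)$ with $\|\bs\|_1\le n$, hence in $\Gamma(M)$ for $M=2^n$, and $|\Gamma(M)|\le C_d\,2^n n^{d-1}\le v^{c(a,d)}$.

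The design heart is the allocation of the $v$-term budget. Writing $A_j:=2^{-aj}\bar j^{(d-1)b}$ and $D_j\asymp 2^j j^{d-1}$ for the number of frequencies in block $j$, I would choose $J\asymp\log_2 v$ to be the largest index with $\sum_{j\le J}D_j\le v/2$, take the low blocks $f_j$, $j\le J$, in full (zero error, at most $v/2$ terms), and run a greedy algorithm, e.g.\ the $\text{WOMP}(\cdot;t)_{L_2(\Og,\mu_\xi)}$ of Section~\ref{ssr}, separately in each remaining block, spending $v_j:=\min\{D_j,\lceil \tfrac12 v\,A_j^{2/3}/S\rceil\}$ terms, where $S:=\sum_{J<j\le n}A_j^{2/3}$. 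When $v_j=D_j$ the block is recovered exactly; otherwise Theorem \ref{XT4} with $s=2,\ \gamma=1/2$, together with the greedy rate in $2$-smooth spaces, gives the constructive estimate $\|f_j-g_j\|_{L_2(\mu_\xi)}\le C A_j v_j^{-1/2}$. The total number of terms is at most $\sum_{j\le J}D_j+\sum_{J<j\le n}v_j\le v$, so $A_{v,\xi}(f):=\sum_{j\le J}f_j+\sum_{J<j\le n}g_j$ is a genuine $v$-term approximant from $\D(\Gamma(M))$.

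Finally, the triangle inequality over blocks yields
\[
\Bl\|f-A_{v,\xi}(f)\Br\|_{L_2(\mu_\xi)}\le \sum_{J<j\le n}C A_j v_j^{-1/2}+\sum_{j>n}A_j .
\]
The first sum equals $\sqrt2\,C\,S^{3/2}v^{-1/2}$, and the geometric decay of $A_j^{2/3}$ gives $S^{3/2}\le C\,2^{-aJ}\bar J^{(d-1)b}$; inserting $2^J\asymp v/(\log v)^{d-1}$ produces $2^{-aJ}\asymp v^{-a}(\log v)^{(d-1)a}$, hence the bound $C(a,b,d)\,v^{-a-1/2}(\log v)^{(d-1)(a+b)}$, with $n$ chosen so the tail is dominated by this. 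The crux—the place where the extra factor $v^{-1/2}$ beyond the naive $A$-norm rate $v^{-a}$ is created—is the interplay between the dimension cap $v_j\le D_j$, which forces error-free full low-frequency blocks, and the Hilbert-space greedy rate $A_j v_j^{-1/2}$ on the remaining blocks. The main technical point I would be careful about is that I need the \emph{constructive} greedy output, not merely $\sigma_{v_j}$, to achieve $C A_j v_j^{-1/2}$; this is exactly where the $2$-smoothness of $L_2(\Og,\mu_\xi)$ is essential.
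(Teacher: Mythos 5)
Your proposal is correct and follows essentially the same route as the paper's proof: decompose $f$ into the dyadic blocks $f_j$, keep the low blocks in full ($\asymp v$ terms, zero error), and apply a Hilbert-space greedy algorithm (OMP/WOMP, with the rate $\|f_j\|_{A(\D)}v_j^{-1/2}$ guaranteed by $2$-smoothness and $\|g_\bk\|_{L_2(\mu_\xi)}\le 1$) blockwise with geometrically decaying budgets, then sum the errors. The only differences are cosmetic — your Hölder-optimal allocation $v_j\propto A_j^{2/3}$ versus the paper's ansatz $v_j=[2^{n-\beta(j-n)}j^{d-1}]$ with $\beta\in(0,a)$, and your explicit truncation at $n\asymp\log_2 v$ versus the paper's letting $v_j$ hit zero — and the $O(\log v)$ overshoot from your ceilings is the same up-to-constants slack the paper itself absorbs via $v\ll 2^n n^{d-1}$.
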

\begin{proof}  We prove the lemma for $v\asymp 2^nn^{d-1}$, $n\in \N$. Let 
$$
f=\sum_{\bk}a_\bk g_\bk
$$
be a representation of $f\in \bW^{a,b}_A(\D)$ satisfying (\ref{ssr15}). We approximate $f_j$ in $L_2(\Omega,\mu_\xi)$ by 
a $v_j$-term approximant from  $\D(Q_j) := \cup_{\|\bs\|_1=j} \rho(\bs)$. The sequence $\{v_j\}$ will be defined later.
Since   $L_2(\Og, \mu_\xi)$ is a Hilbert space and by (\ref{ssr14}) 
 	\be\label{ub}
	\|g_\bk\|_{L_2(\Og, \mu_\xi)}\leq \sup_{\bx\in\Og} |g_\bk(\bx)|\leq 1,\   \bk \in Q_j,
	\ee
 	using Theorem 2.19 of \cite[p. 93]{VTbook}, we deduce from \eqref{ssr15}
 	 that for every $j$  there exists an $h_j\in\Sigma_{v_j}(\D(Q_j))$ (provided by a greedy algorithm -- Orthogonal Matching Pursuit)
 	such that
 $$
 \|f_j-h_j\|_{L_2(\Og, \mu_\xi)}\leq  v_j^{-1/2}2^{-aj}(\bar j)^{(d-1)b}.
 $$
 We take $\beta\in (0,a)$ (for instance, $\beta=a/2$) and specify
$$
v_j := [2^{n-\beta (j-n)}j^{d-1}],\quad j=n,n+1,\dots.
$$
It is clear that there exists $j(\beta,d)$ such that $v_j=0$ for $j\ge j(\beta,d)n$. For $j\ge j(\beta,d)n$
we set $h_j=0$.
In addition to $\{h_j\}_{j=n}^{j(\beta,d)n}$ we include in the approximant 
$$
S_n(f,\D) := \sum_{\|\bs\|_1< n}\delta_\bs(f,\D).
$$
Define
$$
A_v(f,\beta) := S_n(f,\D)+\sum_{j \ge n} h_j = S_n(f,\D)+\sum_{j = n}^{j(\beta,d)n} h_j .
$$
Then, we have built a $v$-term approximant of $f$ with 
$$
v\ll 2^nn^{d-1}  +\sum_{j\ge n} v_j \ll 2^nn^{d-1}.
$$
 The error   of this approximation in $L_2(\Omega,\mu_\xi)$ is bounded from above by
$$ 
\|f-A_m(f,p,\beta)\|_{L_2(\Omega,\mu_\xi)} \le \sum_{j\ge n} \|f_j-h_j\|_{L_2(\Omega,\mu_\xi)} \leq C \sum_{j\ge n} (\bar v_j)^{-1/2}2^{-aj}j^{(d-1)b}
$$
$$
\leq C \sum_{j\ge n}2^{-1/2(n-\beta(j-n))}j^{-(d-1)/2}2^{-aj}j^{(d-1)b} \leq C 2^{-n(a+1/2)}n^{(d-1)(b-1/2)}.
$$
This completes the proof of lemma.

\end{proof}



\section{A more general  version of the conditional theorem  }\label{sec:5}

%

Our goal in this section  is to prove the following conditional theorem, which is the random version of Theorem 5.1 of \cite{DT}.
\begin{Theorem}\label{thm-4-2} Let  $\CW\subset L_\infty(\Og)$ and let $\CW_p:=\{ f\in\CW:\  \ \|f\|_p\leq 1\}$ for some $1\leq p<\infty$. Assume that $\ld\cdot \CW=\CW$ for any $\ld>0$, and $\CW_p$ is a compact subset of $L_\infty$. Let 
	\[ R:=\sup_{f\in \CW_p} 	\sup_{x\in\Og}  |f(x)|.\]
	Let $\xi^1, \cdots, \xi^m$ be independent random points  on $\Og$ satisfying \begin{equation}\label{6-1}
	\f 1m \sum_{k=1}^m \mu_{\xi^k}=\mu,
	\end{equation}
	where $\mu_{\xi^j}$ denotes the probability distribution of $\xi^j$. 
	Then there exist positive constants $C_p, c_p$ depending only on $p$   such that for any  $\va\in (0, 1)$ and  any 
	integer 
	\begin{equation}\label{5-2-c}
	m\ge  C_p 	  \va^{-5 }\left(\int_{c_p \va^{1/p}} ^{R} u^{\frac  p2-1}    \Bigl(\int_{  u}^{ R }\frac {\cH_{c_p \va t}(\CW_p,L_\infty)}t\, dt\Bigr)^{\frac 12} du\right)^2, 
	\end{equation}
	the inequalities, 
	\begin{equation}\label{5.3b}
	(1-\va) \|f\|_p^p \leq \frac  1m \sum_{j=1}^m |f(\xi^j)|^p\leq (1+\va) \|f\|_p^p,\   \ \forall f\in\CW_p,
	\end{equation} 
	hold  with probability 
	\[\geq 1-  \va   \exp\Bl(-\f {cm\va^4} {R^p (\log \f R\va)^2}\Br).\]
\end{Theorem}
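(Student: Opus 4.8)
The plan is to control the single random quantity
$$\Delta := \sup_{f\in\CW_p}\Bl|\f1m\sum_{j=1}^m|f(\xi^j)|^p-\|f\|_p^p\Br|$$
and to prove $\Delta\le\va$ with the asserted probability; the homogeneity $\ld\cdot\CW=\CW$ then transfers \eqref{5.3b} from the slice $\CW_p$ to all of $\CW$. Assumption \eqref{6-1} is exactly what makes this a \emph{centered} empirical process: for each fixed $f$ one has $\EE\bl[\f1m\sum_j|f(\xi^j)|^p\br]=\f1m\sum_j\int|f|^p\,d\mu_{\xi^j}=\|f\|_p^p$, so every increment appearing below has mean zero and Lemma \ref{lem-2-2} applies to it directly, without any symmetrization.

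First I would set up a multiscale chaining of $\CW_p$ in the uniform metric. Fix dyadic radii $\e_k=R2^{-k}$ and minimal nets $\cN_{\e_k}(\CW_p,L_\infty)$, whose logarithmic cardinalities are the entropies $\cH_{\e_k}(\CW_p,L_\infty)$ entering \eqref{5-2-c}. For each $f\in\CW_p$ use the telescoping decomposition $f=f_{k_0}+\sum_{k>k_0}(f_k-f_{k-1})$, with $f_k$ the nearest point of the $k$-th net, and decompose $|f|^p$ correspondingly, link by link. Each link is handled by applying Lemma \ref{lem-2-2} to the centered variables built from $|f_k(\xi^j)|^p-|f_{k-1}(\xi^j)|^p$, for which the two Bernstein parameters $\|\cdot\|_1$ and $\|\cdot\|_\infty$ must be estimated.

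The key increment estimate, and the source of the unusual integrand in \eqref{5-2-c}, is the pointwise bound $\bigl|\,|\phi|^p-|\psi|^p\,\bigr|\le p\,(\max(|\phi|,|\psi|))^{p-1}\,\|\phi-\psi\|_\infty$. Thus a link with $\|f_k-f_{k-1}\|_\infty\lesssim\e_{k-1}$ contributes, at the magnitude level where $|f|\sim u$ (a set of $\mu$-measure $\lesssim u^{-p}$ since $\|f\|_p\le1$), a sup-norm of order $u^{p-1}\e_{k-1}$ and a mean-modulus of order $u^{p-1}\e_{k-1}\cdot u^{-p}$, so that the Bernstein variance proxy $M_1M_\infty$ is of order $u^{p-2}\e_{k-1}^2$. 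Peeling $\CW_p$ into the magnitude levels $u\in[2^{-\ell}R,2^{-\ell+1}R]$ and feeding these parameters into Lemma \ref{lem-2-2}, then summing the resulting tail exponents over all chaining scales and magnitude levels against a carefully split deviation budget, produces precisely the nested integral $\int u^{p/2-1}\bl(\int_u^R\cH_{c_p\va t}(\CW_p,L_\infty)/t\,dt\br)^{1/2}\,du$ of \eqref{5-2-c}: the inner integral $\int_u^R\cH/t\,dt$ is the accumulated entropy budget (number of chaining links) over the $L_\infty$-scales from $u$ up to $R$, which enters under a square root through the sub-Gaussian part of the Bernstein bound, while the outer weight $u^{p/2-1}=\sqrt{u^{p-2}}$ is exactly the standard deviation carried by magnitude level $u$, integrated across levels. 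Requiring $m$ to exceed the right side of \eqref{5-2-c} is what forces the total accumulated deviation below $\va$.

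The final step converts the union bound over all links into the stated probability. Summing the Bernstein exponents over the $O(\log(R/\va))$ active scales -- levels finer than $\sim\va^{1/p}$ already fit inside a single net ball by Lemma \ref{lem-1-3} and contribute below $\va$, which is why the outer integral starts at $c_p\va^{1/p}$ -- yields a failure probability of the form $\va\exp\bl(-cm\va^4/(R^p(\log(R/\va))^2)\br)$, where the $(\log(R/\va))^2$ records the number of scales and the $\va^4$ records the Bernstein $\va^2$ together with the $\va^2$ lost in splitting the deviation budget across scales and levels. The main obstacle is the third step: because $|f|^p$ is nonlinear, the correct increment geometry is not governed by a single norm, and one must track simultaneously the $L_\infty$ distance between functions (governing the covering entropy, hence the inner integral) and the pointwise magnitude of the functions (governing the variance, hence the weight $u^{p/2-1}$). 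Interleaving the magnitude peeling with the multiscale $L_\infty$ chaining so that Lemma \ref{lem-2-2} is invoked with the right $(M_1,M_\infty)$ at each scale--level pair is the technical heart, and here I would follow the method behind Theorem~5.1 of \cite{DT} and the techniques of \cite{BLM} closely.
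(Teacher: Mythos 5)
Your plan hinges on applying Lemma \ref{lem-2-2} with a union bound at each ``scale--level pair,'' but the magnitude peeling as you describe it destroys the finiteness that the union bound needs: you restrict each chaining increment to the set where $|f|\sim u$, and this set depends on $f$ itself, not on the two net points $f_k,f_{k-1}$. The family $\bl\{(|f_k|^p-|f_{k-1}|^p)\chi_{\{|f|\sim u\}}:\ f\in\CW_p\br\}$ is therefore infinite, so Lemma \ref{lem-2-2} plus a union bound cannot be invoked for it. This is not a removable technicality; it is exactly the point at which the paper's proof does something structurally different. The paper never chains increments at all: for each $f$ it builds a single simple function $h(f)=\sum_{j}(1+a)^{j}\chi_{D_j(f)}$ (with $a\sim\va$), whose level sets $D_j(f)$ are defined through the net approximants $A_k(f)$, $k\ge j$, taken at radii proportional to $\va$ times the level $(1+a)^k$; it shows deterministically that $h(f)^p$ approximates $|f|^p$ multiplicatively up to $\va/8$ (outside a region where $|f|^p\le \va/8$), and then applies Lemma \ref{lem-2-2} to the \emph{finite} families $\cF_j^p=\{M_j\chi_{D_j(f)}\}$, whose cardinality is bounded by the product $\prod_{k\ge j}|\cA_k|$ of covering numbers. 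Because the level sets are functions of net points, the union bound is legitimate there; in your decomposition it is not. You acknowledge that this interleaving is the ``technical heart'' and say you would follow the method behind Theorem 5.1 of \cite{DT} --- but that method is the level-set quantization just described, not the chaining you set up, so following it means abandoning your decomposition rather than completing it.

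Two further points. First, your claim that the chaining ``produces precisely'' the nested integral in \eqref{5-2-c} is asserted, not derived: a per-scale union bound over pairs of net points carries entropy $\cH_{\e_k}+\cH_{\e_{k-1}}$ at scale $k$ and hence yields a Dudley-type sum of square roots of entropies, whereas the inner integral $\int_u^R \cH_{c_p\va t}(\CW_p,L_\infty)\,dt/t$ in \eqref{5-2-c} is an \emph{accumulated} entropy under a single square root; in the paper this arises exactly because $\log|\cF_j^p|$ is a sum of entropies over all levels $k\ge j$, a feature of the level-set construction with no counterpart in your scheme. Second, your appeal to Lemma \ref{lem-1-3} to justify starting the outer integral at $c_p\va^{1/p}$ is misplaced: that lemma concerns entropy of balls in finite-dimensional normed spaces and plays no role in Theorem \ref{thm-4-2}; in the paper the cutoff $(1+a)^{j_0}\sim \va^{1/p}$ comes from the elementary observation that points where $|f(x)|^p\le \va/8$ may be discarded at total cost $\va/8$, since $\mu$ is a probability measure.
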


It can be easily seen that either of the following two conditions implies the condition  \eqref{6-1}: 
	\begin{enumerate}[\rm (i)]
		\item  $\xi^1,\cdots,\xi^m$ are identically distributed according to  $\mu$;
		\item there exists a partition $\{\Ld_1, \cdots, \Ld_m\}$ of $\Og$ such that $\mu(\Ld_j)=\f1m$ and $\xi^j\in\Ld_j$ is  distributed  according to    $m\cdot \mu\Bl|_{\Ld_j}$  for each $1\leq j\leq m$. 
	\end{enumerate}

 The proof of Theorem \ref{thm-4-2}  follows along the same line as that of Theorem~ 5.1 of \cite{DT}. We sketch it as follows.

\begin{proof} [Proof of Theorem \ref{thm-4-2}]
	Let 
	\[\CS_p:= \{  f/ {\|f\|_p}:\  \ f\in\CW,\  \ \|f\|_p>0\}\subset \CW_p.\] 
%
%
	Let $c^\ast=c_p^\ast\in (0, \frac 12)$ be a  sufficiently small  constant depending only on $p$.
	Let $a:=c^\ast\va$. Let  $J, j_0$ be two integers  such that $j_0<0\leq J$, 
	\begin{equation}\label{5.8}
	(1+a)^{J-1}\le R < (1+a)^J\   \ \text{and}\   \ (1+a)^{j_0 p} \leq \frac  1{10} \va  \leq (1+a)^{(j_0+1) p }. 
	\end{equation}
	For $j\in\ZZ$, let 
	$$
	\cA_j := \cN_{2a(1+a)^j}(\CS_p,L_\infty)\subset \CS_p
	$$
	denote the minimal $2a(1+a)^j$-net of $\CS_p$ in the norm of $L_\infty$. For $f\in\CS_p$, we  define  $A_j(f)$  to be  the function in  $ {\cA}_j$ such that  $\|A_j (f)-f\|_\infty\leq  2a(1+a)^j$. 
	
	For  $f\in \CS_p$ and $j> j_0$, let
	$
	U_j(f) := \{\bx\in\Og : |A_j(f)(\bx)| \ge (1+a)^{j-1}\},
	$
	and 
	$
	D_j(f) := U_j(f) \setminus \underset{k>j} {\bigcup} U_k(f).$
 Define
	\begin{equation}\label{4-3-0}
		h(f,\bx) := \sum_{j=j_0+1}^{J} (1+a)^j \chi_{D_j(f)}(\bx).
	\end{equation}

It can be easily verified that
	\begin{equation}\label{5-6-eq}
	(1-\f \va8)  |f(\bx)|^p \leq |h(f,\bx)|^p \leq (1+\f \va8) |f(\bx)|^p, 
	\end{equation}
	if $ \bx\in\bigcup_{j_0<j\leq J} D_j(f)$, 
	and   $	|f(\bx)|^p\leq \frac  \va 8$ otherwise \footnote{For details, we refer to the  proof of Theorem 5.1 in  \cite{DT}.} .  As a result, for any probability measure $\nu$ on $\Og$,  and any $f\in\CS_p$, we have 
	\begin{equation}\label{5-9-0}
	\Bl|	\|h(f)\|_{L_p(\nu)}^p- \|f\|_{L_p(\nu)}^p\Br| \leq \f \va 8  \|f\|_{L_p(\nu)}^p+\f \va 8.
	\end{equation}

	%
	%
	
	Now 	for $j_0+1\leq j\leq J$,  let 
	$$
	\cF_j^p := \left\{ M_j\cdot \chi_{D_j(f)}:\   f\in  \CS_p\right\},\   \   \ \text{	where  $M_j:= (1+a)^{pj}$.}
	$$
	Let  $\{\va_j\}_{j=j_0+1}^{J}$ be a sequence of positive numbers satisfying	$\sum_{j=j_0+1}^{J} \va_j \leq \va/4$, which will be  specified later.  
	By \eqref{5-6-eq}, 
	for each  $j_0<j\leq J$, 
	$$
	\|M_j\chi_{D_j(f)}\|_1 \le \|h(f)\|_p^p\leq (1+\f \va 8) \|f\|_p^p\le 2.
	$$
	Thus,  using \eqref{6-1} and   Lemma \ref{lem-2-2}, 
	we  conclude that  the inequalities , 	\begin{align}
	&\sup_{\varphi\in\FF_j^p}\Bigl| \frac  1m \sum_{k=1}^m \varphi(\xi^k) -\int_{\Og} \varphi(x) \, d\mu(x) \Bigr|\leq \va_j,\   \   \    j_0<j\leq  J,\label{5-10a}
	\end{align} 
	hold  simultaneously  with probability 
	\begin{align}
	&\ge 1- \sum_{j=j_0+1}^{J} |\cF_j^p| \exp \Bigl( - \frac  {m\va_j^2} {16M_j} \Bigr).\label{4-9}
	\end{align}
	
	Now we fix  a set of  random points $\xi^1,\cdots, \xi^m$ for which  \eqref{5-10a} is satisfied.   Using  \eqref{4-3-0}, 
	\eqref{5-10a} and  the  triangle inequality, we then deduce 
	\begin{align*}
	\sup_{f\in\CS_p} 	\Bigl| \frac  1 m \sum_{j=1}^m |h(f, \xi^j)|^p-\|h(f)\|_p^p\Bigr|  \leq 	\sum_{j=j_0+1}^{J} \va_j<\f \va4. 
	\end{align*}
	This together with   \eqref{5-9-0} implies  that for  any   $f\in\CS_p$, 
	\begin{align*}
	\Bigl| \frac  1 m \sum_{j=1}^m |f(\xi^j)|^p-1\Bigr|  \leq 	\f 3 8 \va + \f \va 8 \cdot  \frac  1 m \sum_{j=1}^m |f(\xi^j)|^p,
	\end{align*}
	from which the desired inequality \eqref{5.3b} follows.

	Thus, 
	to complete the proof, it remains to construct     a sequence  of positive  numbers $\{\va_j\}_{j=j_0+1}^J$  such that  \begin{equation}\label{4-8}
	\sum_{j=j_0+1}^{J} \va_j \leq \va/4,\end{equation}  and 
	\begin{align}
	&\sum_{j=j_0+1}^{J} |\cF_j^p| \exp \Bigl( - \frac  {m\va_j^2} {16M_j} \Bigr)\leq  C \va \exp\Bl(-\f {cm\va^4} {R^p (\log \f R\va)^2}\Br).\label{4-9}
	\end{align}


	Since  $\CS_p\subset \CW_p$, we can estimate the  cardinalities of the sets $\FF_j^p$  as follows: 
	\begin{equation}\label{5-15}
	|\cF_j^p| \le |\cA_j|\times\cdots\times|\cA_{J}|\leq \prod_{k=j}^J N_{a(1+a)^k} (\CW_p, L_\infty)=:L_j.
	\end{equation}
A straightforward calculation shows that 
	\begin{align}
	\log L_j&= \sum_{k=j}^{J} \cH_{a(1+a)^k}(\CW_p,L_\infty)
	\leq C\va^{-1}  \int_{ ( 1+a)^{j-1}}^{R }\cH_{at}(\CW_p,L_\infty)\frac  {dt}t.\label{4-10}
	\end{align}

	Now we specify the numbers $\va_j$ as follows:
	$$\va_j:=\da+\tau_j,\  \ j_0<j\leq J,$$ where $\da:=\frac {\va} {8 (J-j_0)}$, and $\tau_j>0$ is defined by 
	\begin{align}
	&2\log  L_j = \frac  {m\tau_j^2}{16 M_j},\   \    \  \text{that is}\   \  \tau_j:=4\sqrt{2}\sqrt{\f{M_j\cdot \log  L_j} m} .\label{4-11a}
	\end{align}
	We  have 
	\begin{align*}
	\sum_{j=j_0+1}^{J}\va_j&=4\sqrt{2}m^{-1/2} \sum_{j=j_0+1}^{J} (M_j \log L_j)^{\frac 12} +\f \va 8,
	\end{align*}
	which,  by \eqref{4-10} and straightforward calculations,  is estimated above by 
	\begin{align*}
	&\leq  \f \va 8 + C m^{-\f12} \va^{-\frac 32}\int_{4^{-1}(10^{-1}\va)^{1/p}} ^{R} u^{\frac  p2-1}    \Bigl(\int_{  u}^{ R }\cH_{c_p \va t}(\CW_p,L_\infty)\frac  {dt}t\Bigr)^{\frac 12} du.
	\end{align*}
	Thus,     to ensure \eqref{4-8}, it is enough to assume  that 
	\[ m\ge C \va^{-5}  \Bigg[\int_{4^{-1}(10^{-1}\va)^{1/p}} ^{R} u^{\frac  p2-1}    \Bigl(\int_{  u}^{ R }\cH_{c_p \va t}(\CW_p,L_\infty)\frac  {dt}t\Bigr)^{\frac 12} du\Bigg]^2\]
	with  $C=C_p$ being  a large constant depending only on $p$.

	Finally, we estimate the sum 
	$$S:=\sum_{j=j_0+1}^{J} |\cF_j^p| \exp \Bigl( - \frac  {m\va_j^2} {16M_j} \Bigr). $$
	Since $\va_j^2\ge \tau_j^2+\da^2$, we obtain from \eqref{5-15} that 
	\begin{align*}
	S&\leq 
	\sum_{j=j_0+1}^{J}  \exp \Bigl(\log L_j  - \frac  {m\tau_j^2} {16M_j} - \frac  {m\da^2} {16M_j}\Bigr)
	\leq \exp\Bl(- \frac  {m\da^2} {16M_J}\Br)  \sum_{j=j_0+1}^{J}  \f 1 {L_j}.
	\end{align*}  
Using  \eqref{5.8},  we have 
	\[ \frac  {m\da^2} {16M_J}= m\cdot \Bl(\f {\va} {8(J-j_0)} \Br)^2 \cdot \f 1 {16 M_J}\ge \f {c m\va^4}{R^p (\log \f R\va)^2 }.\]
	Since 
	\[L_j \ge N_{a(1+a)^j}(\CW_p, L_\infty),\]
this	implies 
	\begin{align*}
	S 
	&	\leq   \exp\Bl(-\f {cm\va^4} {R^p (\log \f R\va)^2}\Br) \sum_{j=j_0+1}^{J} \frac  1 {N_{a(1+a)^j}(\CW_p, L_\infty)}.\end{align*}

	On the other hand,  from  the  proof of Theorem 5.1 in  \cite{DT}, we know that 
	\begin{equation}\label{4-13}
	N_{t} (\CW_p, L_\infty) \ge  \f  {R} {4t},\   \   \   \forall   0<t<R.
	\end{equation} 
	It follows that
	\begin{align*}
S \leq   \exp\Bl(-\f {cm\va^4} {R^p (\log \f R\va)^2}\Br)\cdot 4c_\ast  R^{-1}\va    \sum_{j=j_0+1}^{J} (1+a)^j\leq \va  \exp\Bl(-\f {cm\va^4} {R^p (\log \f R\va)^2}\Br),
	\end{align*}
	which is as desired. This completes the proof.

\end{proof}

{\bf Acknowledgment.} The authors thank Tino Ullrich for bringing to their attention the paper \cite{HR}.  They would also like to express their gratitude to the referee  for carefully reading this paper and providing valuable suggestions and comments.

  \Addresses

\end{document}